\newtheorem{theorem}{Theorem}[section]
\newtheorem{proposition}[theorem]{Proposition}
\newtheorem{lemma}[theorem]{Lemma}
\newtheorem{sublemma}[theorem]{Sublemma}
\newtheorem{corollary}[theorem]{Corollary}
\newtheorem{warning}[theorem]{Warning}
\theoremstyle{definition}
\newtheorem{definition}[theorem]{Definition}
\theoremstyle{remark}
\newtheorem{remark}[theorem]{Remark}
\newtheorem{question}[theorem]{Question}
\numberwithin{equation}{section}
\newcommand{\cP}{\mathcal{P}}
\newcommand{\cO}{\mathcal{O}}
\newcommand{\cR}{\mathcal{R}}
\newcommand{\bZ}{\mathbb{Z}}
\newcommand{\cC}{\mathcal{C}}
\newcommand{\diam}{\mathrm{diam}}
\renewcommand{\bold}[1]{\medskip \noindent {\bf #1 }\nopagebreak}
\begin{document}

\title[Spheres in the curve graph]{Spheres in the curve graph and linear connectivity of the Gromov boundary}

\author{Alex Wright}
\email{alexmw@umich.edu}

\begin{abstract}
We consider the curve graph in the cases where it is  not a Farey graph, and show that its Gromov boundary is linearly connected.  
For a fixed center point $c$ and radius $r$, we define the sphere of radius $r$ to be the induced subgraph on the set of vertices of distance $r$ from $c$. We show that these spheres are always connected in high enough complexity, and prove a slightly weaker result for low complexity surfaces. 
\end{abstract}

\maketitle

\thispagestyle{empty}

\setcounter{tocdepth}{1}
\tableofcontents



\section{Introduction}

\subsection{A glimpse of the broad context.} One of the central objects in low-dimensional topology and geometric group theory is the complex of curves. This finite dimensional simplicial complex was introduced by Harvey as an analogue of the Tits building for symmetric spaces \cite{Harvey}. From its introduction as a tool to understand the boundary structure of Teichm\"uller space, the scope and utility of its study  broadened in  topology \cite{Harer, Harer2} and  rigidity \cite{Ivanov}. Later, work of Masur and Minsky established that the curve graph itself had tractable geometry, being a Gromov hyperbolic space \cite{MMI}, and work of many authors established beyond any doubt the curve complex as a key tool for understanding the geometry of mapping class groups, Teichm\"uller spaces, and hyperbolic three manifolds  \cite{MMII,ICM1, CombModel, Rank, BKMM, ELC, QT}. Skipping ahead to the present, the uses of the curve complex are now too numerous to recall here. Via the notion of a hierarchically hyperbolic space \cite{HHS1, HHS2, WhatIs} as well as other generalizations and analogies \cite{Tight,Osin, WhatIs2, FSn, FFn, ICM2, KK} the influence of the curve complex has extended  to many important spaces and groups far beyond what might have been expected.

For most purposes outside of algebraic topology it suffices to consider the 1-skeleton of the curve complex, which is known as the curve graph. Motivated by questions on mapping class groups discussed in more detail below, we study here connections between the fine and coarse geometry of the curve graph, discovering new results and recovering and strengthening important previous results that had not been previously understood from the perspective of the curve graph. 

\subsection{Main results.}Let $\Sigma=\Sigma_{g,n}$ be a connected surface with genus $g$ and $n$ punctures. We always assume $g$ and $n$ are such that $\Sigma$ has a complete, finite volume hyperbolic metric and that $(g,n)\neq (0,3)$. We define the complexity of $\Sigma$ as $\xi(\Sigma)=3g-3+n$. We say $\Sigma$ is 
\begin{itemize}
\setlength\itemsep{0.15em}
\item \emph{exceptional} if $\xi(\Sigma)=1$, i.e. $(g,n)\in \{(1,1),(0,4)\}$,
\item \emph{low complexity} if $\xi(\Sigma)=2$, i.e. $(g,n)\in \{(1,2),(0,5)\}$, 
\item \emph{medium complexity} if $\xi(\Sigma)=3$, i.e. $(g,n)\in \{(2,0), (1,3), (0,6)\}$, 
\item \emph{high complexity} if $\xi(\Sigma)\geq 4$. 
\end{itemize}

We let $\cC \Sigma$ be the curve graph of $\Sigma$,  and note the exceptional isomorphisms $\cC \Sigma_{(1,1)} \cong \cC \Sigma_{(0,4)}$,  $\cC \Sigma_{(1,2)} \cong \cC \Sigma_{(0,5)}$, and $\cC \Sigma_{(2,0)} \cong \cC \Sigma_{(0,6)}$. 

Fix an arbitrary vertex $c$ in $\cC \Sigma$, which we sometimes refer to as the center point. For $r\geq 0$ integral, we let $S_r=S_r(c)$ be the sphere of radius $r$ in the graph $\cC\Sigma$, which consists of all the vertices at distance $r$ from $c$. We say that a subset of vertices of a graph is connected if the induced subgraph is connected. 

\begin{theorem}\label{T:SphereConnected}
For all $r$, we have the following: 
\begin{itemize}
\setlength\itemsep{0.15em}
\item If $\Sigma$ is high complexity, $S_r$ is connected. 
\item If $\Sigma$ is medium complexity,   $S_r(c)\cup S_{r+1}(c)$ is connected. 
\item If  $\Sigma$ is low complexity,  $S_r(c)\cup S_{r+1}(c)\cup S_{r+2}(c)$  is connected. 
\end{itemize}
\end{theorem}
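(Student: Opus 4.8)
Write $\rho(\gamma)=d(c,\gamma)$, and let $k=k(\Sigma)$ be $0$, $1$, or $2$ according as $\Sigma$ is high, medium, or low complexity. Since moving a curve one step towards $c$ keeps it inside any band of norms $[r,r+k]$, the theorem is equivalent to the assertion that any two curves of norm $r$ can be joined by a path all of whose curves have norm in $[r,r+k]$. The plan is to prove this by induction on $r$, using a path around $S_{r-1}$ as a template. Explicitly: given $\alpha,\beta$ of norm $r$, pick neighbours $\alpha',\beta'$ of norm $r-1$, join them by a path $p_0=\alpha',\dots,p_m=\beta'$ with every $\rho(p_j)\in[r-1,r-1+k]$ (inductive hypothesis), choose for each $j$ a curve $q_j$ of norm exactly $r$ disjoint from $p_j$ — with $q_0=\alpha$, $q_m=\beta$; this is just one step from $p_j$, since $\rho(p_j)$ is within $1$ of $r$ — observe that $d(q_j,q_{j+1})\le 3$, and finally join each consecutive pair $q_j,q_{j+1}$ by a path lying in $[r,r+k]$. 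Concatenating these last paths yields the path from $\alpha$ to $\beta$. The base cases $r\le 1$ (where $S_0=\{c\}$ is too small to serve as a template) are handled directly, on $\link(c)$ and curves near it; there one may use the exceptional isomorphisms to pass to whichever model of $\Sigma$ is more convenient.

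\textbf{The two ingredients.} This scheme rests on two lemmas. The first is that $\cC\Sigma$ has \emph{no dead ends} relative to $c$: every curve of norm $s$ is disjoint from a curve of norm $s+1$ (this is what supplies the $q_j$ above when $\rho(p_j)=r-1$); I would establish it separately by a surgery argument in the present complexity range. The second, and the heart of the matter, is the \emph{local reconnection lemma}: if $\rho(\alpha)=\rho(\beta)=r$ and $d(\alpha,\beta)\le 3$, then $\alpha$ and $\beta$ are joined by a path of curves with norms in $[r,r+k]$. I would prove this by induction on $i(\alpha,\beta)$, using the classical surgery that replaces one of the two curves by a disjoint curve of strictly smaller intersection number with the other; such a surgered curve, being disjoint from $\alpha$ or $\beta$, has norm within $1$ of $r$, and the one genuine difficulty is to reroute it outward when its norm is $r-1$, without slipping into the ball $B_{r-1}(c)$ and without drifting too far out.

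\textbf{Where the complexity enters.} A curve of norm $r-1$ that one wishes to push outward is controlled inside the complementary subsurface that it and the other relevant curves cut out, whose complexity is $\xi(\Sigma)$ minus a bounded amount. When this complement is still a high- or medium-complexity surface, its curve graph is connected and rich enough to carry the push-out while staying at norm exactly $r$, which accounts for $k=0$ when $\xi(\Sigma)\ge 4$. When the complement degenerates to an exceptional surface — a four-holed sphere or a one-holed torus, which has no two disjoint curves — the push-out must detour through a larger subsurface at the cost of one extra unit of outward room, and running this bookkeeping inside the genuinely small surfaces produces $k=1$ for $\xi(\Sigma)=3$ and $k=2$ for $\xi(\Sigma)=2$.

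\textbf{Main obstacle.} The only substantial work is the local reconnection lemma, and within it the bookkeeping that keeps \emph{every} curve produced by the surgeries and push-outs inside the band $[r,r+k]$: one must never descend into $B_{r-1}(c)$, and the outward excursions must be bounded by the stated $k(\Sigma)$ and must not accumulate over the argument. I expect the low-complexity case to be the most delicate, since there every complementary surface one meets is a four-holed sphere or one-holed torus — the most constrained possible situation, which is exactly why the conclusion there concerns three consecutive spheres — whereas the high-complexity case instead requires showing that there is always enough room never to leave $S_r$ at all.
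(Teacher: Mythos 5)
Your overall scheme — induct on $r$, use a path near $S_{r-1}$ as a template, and push it outward sphere by sphere — is the same skeleton as the paper's (its Lemma \ref{L:BasicConnected} is precisely a cleaner, purely graph-theoretic version of your template argument, needing only the cases $d(\alpha,\beta)\le 2$ of your reconnection lemma rather than $d\le 3$). But both of your "ingredients" have genuine gaps. First, your no-dead-ends lemma is false as stated: Birman and Menasco proved that the curve graph \emph{has} dead ends, i.e.\ there exist curves of norm $s$ with no disjoint curve of norm $s+1$ (the culprits are certain separating curves, both of whose complementary components have $c$-projections of bounded diameter). No surgery argument will establish it. The paper's fix is structural, not cosmetic: it introduces the graph of \emph{essentially non-separating} curves (non-separating curves and pants curves), proves via the Bounded Geodesic Image Theorem that such curves are never dead ends (one twists a neighbouring curve in the connected complement $V$ until $d_V(\cdot,c)>M$, forcing every geodesic to $c$ through the original curve), shows this subgraph is $1$-dense and that every curve of $S_r$ has an essentially non-separating neighbour in $S_r$, and then runs the whole induction inside that subgraph. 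You need some version of this detour; without it the $q_j$ in your template step need not exist.

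Second, your local reconnection lemma is indeed where all the work lies, but induction on $i(\alpha,\beta)$ via classical surgery cannot close it, because the step "reroute the surgered curve outward when its norm is $r-1$ without slipping into $B_{r-1}(c)$" is exactly the statement being proved; you have no mechanism for certifying that a replacement curve is far from $c$. The paper's mechanism is again the Bounded Geodesic Image Theorem: whenever it must choose a curve in a complementary subsurface $V$, it chooses one with $d_V(\cdot,c)>M$, which pins down its distance to $c$ up to the ambiguity of which boundary curve of $V$ a geodesic to $c$ passes through. Moreover, the connectivity of the relevant outward-pointing sets is obtained by inducting on the \emph{complexity of the surface}: for $z\in S_r$ one works in the curve graph of $\Sigma-z$ and invokes the theorem for that lower-complexity surface, together with connectivity results for the essentially non-separating curve graph. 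Your complexity bookkeeping ("one extra unit of outward room per degeneration") correctly predicts the shape of the answer but is not a proof; in particular the base case $\Sigma_{0,5}$ requires a bespoke argument (pentagons, uniqueness of geodesics of length $2$, and a classification of which vertices of $S_r$ admit sidesteps) that a uniform surgery scheme does not supply.
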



We will also address the ``sphere at infinity" as follows. 

\begin{theorem}\label{T:LinearlyConnected}
If $\Sigma$ is non-exceptional, the Gromov boundary of $\cC \Sigma$ is linearly connected. 
\end{theorem}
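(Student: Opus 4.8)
The plan is to deduce linear connectivity of $\partial \cC\Sigma$ from Theorem \ref{T:SphereConnected} together with the standard fact that the curve graph is Gromov hyperbolic (Masur--Minsky). Recall a geodesic metric space is linearly connected if there is a constant $L$ such that any two points $x,y$ are joined by a connected subset of diameter at most $L\cdot d(x,y)$. For the Gromov boundary of a hyperbolic graph, connectivity of spheres around a basepoint $c$ is morally the right input: a point of $\partial \cC\Sigma$ is represented by a geodesic ray from $c$, and points on the sphere $S_r$ that are ``visually close'' should be joined within $S_r$ by a path that stays, up to bounded error, at distance $r$ from $c$; projecting such paths to infinity should produce the connected sets of controlled diameter required.

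\smallskip

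First I would fix a visual metric $\rho$ on $\partial \cC\Sigma$ with parameter close to $1$ (using $\delta$-hyperbolicity), so that $\rho(\alpha,\beta) \asymp a^{-(\alpha|\beta)_c}$ for the Gromov product based at $c$. Given two boundary points $\alpha,\beta$ with $(\alpha|\beta)_c = r$, choose geodesic rays $\gamma_\alpha,\gamma_\beta$ from $c$ representing them; their time-$r$ points $\gamma_\alpha(r),\gamma_\beta(r) \in S_r$ are within bounded distance of each other. In the high complexity case, Theorem \ref{T:SphereConnected} gives a path in $S_r$ between them; I would like to say this path has length bounded by an exponential in $r$ — but in an infinite-diameter locally infinite graph the sphere need not have bounded-size connected hull, so the real step is to control the \emph{geometry} of the connecting path, not just its existence. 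The key point to extract is: any two adjacent (or boundedly-close) vertices of $S_r$ are joined in $S_{r}\cup S_{r+1}\cup\cdots$ (finitely many spheres) by a path each of whose vertices is within bounded distance of $S_r$; equivalently, the connecting path does not dip deep toward $c$. This is exactly what connectivity of the induced subgraph on $S_r$ (respectively on a bounded union of consecutive spheres, in lower complexity) provides: the path literally lives at radius $r$ (or within a window of $1$ or $2$).

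\smallskip

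Second, I would push such a path out to infinity. For each vertex $v$ on the $S_r$-path, pick a geodesic ray from $c$ through (a bounded neighborhood of) $v$, giving a boundary point $\xi_v$; consecutive $\xi_v$ satisfy $(\xi_v|\xi_{v'})_c \geq r - O(1)$ because the corresponding rays agree up to radius $\approx r$, hence $\rho(\xi_v,\xi_{v'}) \lesssim a^{-r} \asymp \rho(\alpha,\beta)$. The union of these rays' endpoints, together with the limiting arcs, forms a connected subset of $\partial\cC\Sigma$ containing $\alpha$ and $\beta$; each ``link'' has diameter $\lesssim \rho(\alpha,\beta)$, and I must also bound the \emph{number} of links, or rather bound the total diameter directly. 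Here the cleaner route is the characterization of linear connectivity via \emph{bottlenecks}: $\partial X$ is linearly connected iff it is connected and there is no sequence of pairs $x_n,y_n$ with $\rho(x_n,y_n)\to 0$ such that every connected set joining them has diameter bounded below. So I would argue by contradiction: given such bad pairs, pull them back to pairs of rays agreeing up to radius $r_n\to\infty$, apply sphere-connectivity at radius $r_n$ to get a joining path at radius $r_n\pm O(1)$, push to infinity, and observe the resulting connected boundary set has diameter $\lesssim a^{-r_n}\to 0$ — a contradiction. The low and medium complexity cases are handled identically since a path in $S_{r}\cup\cdots\cup S_{r+2}$ still stays within radius $[r-O(1), r+O(1)]$ of $c$, which is all that the visual-metric estimate needs.

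\smallskip

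The main obstacle I anticipate is the ``push to infinity'' step: turning a combinatorial path through a locally infinite sphere into a \emph{connected} subset of the boundary with quantitatively controlled diameter, rather than merely a discrete $a^{-r}$-dense sequence of boundary points. Passing from an $\epsilon$-chain of boundary points to an honest connected set requires knowing $\partial\cC\Sigma$ is, say, locally path-connected or at least that $\epsilon$-chains can be filled by connected sets of comparably small diameter — which is essentially the content one is trying to prove. I expect the fix is to work not with the endpoints alone but with the full ``light cone'': the union over all $v$ in the $S_r$-path of the set of boundary points within $a^{-r}$ of $\xi_v$, or better, to build an explicit continuous path in $\partial\cC\Sigma$ by a telescoping/nesting argument over the scales $r, r+1, r+2, \ldots$, at each scale invoking connectivity of the relevant spheres to refine the previous path. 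Verifying that this telescoping converges and yields a path of diameter $O(a^{-r})$ — i.e., that the refinements at scale $s$ contribute only $O(a^{-s})$ — is where the hyperbolicity constant and the (uniform-per-surface) sphere-connectivity bounds must be combined carefully, and is the technical heart of the argument.
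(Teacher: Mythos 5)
There is a genuine gap, and it sits exactly where you flagged ``the technical heart.'' Your proposed input is Theorem \ref{T:SphereConnected} together with the observation that the connecting paths ``do not dip deep toward $c$,'' i.e.\ stay within a bounded window of radii around $r$. That control is not sufficient. A path joining two adjacent vertices $x,y\in S_r$ could stay entirely in $S_r\cup S_{r+1}$ and yet wander a graph distance $10^r$ away from $x$ before returning; the Gromov products (based at $c$) between boundary points of rays through such path vertices would then be essentially $0$, so pushing that path to infinity produces a connected set of macroscopic diameter rather than diameter $O(a^{r})$. What the telescoping argument actually needs at each scale is that the connecting path stays in a ball $B_D(x)$ of \emph{uniformly bounded radius around its endpoints}, not merely at bounded distance from the sphere. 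This is precisely the strengthening the paper isolates: Lemma \ref{L:BasicConnected} (which suffices for Theorem \ref{T:SphereConnected}) imposes no such bound, while Proposition \ref{P:LinearlyConnectedGraph} --- the criterion actually used for linear connectivity --- requires paths in $(\Gamma-B_r(c))\cap B_D(x)$, and the key Propositions \ref{P:key05} and \ref{P:keyhigher} deliver this with explicit constants ($B_6(z)$, resp.\ $B_2(z)$). So Theorem \ref{T:LinearlyConnected} does not follow from Theorem \ref{T:SphereConnected}; it needs the local versions of the sphere-connectivity statements, which must be proved separately (and are, in the paper, the bulk of the work).

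Two further points. First, your ``bottleneck'' characterization of linear connectivity is false as stated: ruling out sequences of pairs with $\rho(x_n,y_n)\to 0$ whose joining sets have diameter bounded below from zero does not give the multiplicative bound $\mathrm{diam}\leq L\,\rho(x,y)$ (consider minimal joining diameter of order $\rho\log(1/\rho)$), so the contradiction route does not establish the theorem even granting the pushing-to-infinity step. Second, your instinct for repairing the ``push to infinity'' step --- a nested refinement over scales $r, r+1, r+2,\dots$, producing a uniformly continuous map from a dense subset of $[0,1]$ whose closure is the desired connected set --- is exactly the paper's proof of Proposition \ref{P:LinearlyConnectedGraph}. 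But the convergence of that telescoping series, $\sum_s O(a^{s})$, is again contingent on the bounded-ball control at every scale; with only Theorem \ref{T:SphereConnected} as input the scale-$s$ contribution is unbounded and the series does not converge. In short: right skeleton, wrong (too weak) key lemma.
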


Here it is implicit that we are using a visual metric on the Gromov boundary. Recall that a metric space $(X,d)$ is said to be linearly connected if there is a constant $L>0$ such that for each pair $x,y\in X$ there is a compact connected set in $X$ containing $x$ and $y$ of diameter at most $L d(x,y)$. Linear connectivity is also called bounded turning or $LLC(1)$, and appears in questions relating to the existence of quasi-isometrically embedded hyperbolic planes in hyperbolic spaces \cite{Tukia, BK, Mackay, MackaySisto}. In our proof, the compact connected set produced will be a path from $x$ to $y$, and as discussed in \cite[Section 1]{Mackay}, this set can in general be taken to be an arc (embedded path) from $x$ to $y$. 

\subsection{Previous results.}
In particular, we obtain a fundamentally new proof of the following result. 

\begin{corollary}\label{C:Gabai}
If $\Sigma$ is non-exceptional, the Gromov boundary of $\cC \Sigma$ is path connected and locally path connected. 
\end{corollary}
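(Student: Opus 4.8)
The plan is to derive Corollary \ref{C:Gabai} as a formal consequence of Theorem \ref{T:LinearlyConnected}, using the observation recorded after that theorem's statement that the connected set realizing linear connectivity may be taken to be a path. Write $\partial\cC\Sigma$ for the Gromov boundary equipped with a fixed visual metric $d$, and let $L$ be the linear connectivity constant furnished by Theorem \ref{T:LinearlyConnected}. Path connectivity is then immediate: any two points $x,y\in\partial\cC\Sigma$ are joined by a path of diameter at most $Ld(x,y)$, and in particular $\partial\cC\Sigma$ is connected.

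For local path connectivity I would run the standard shrinking-ball argument. Fix $x\in\partial\cC\Sigma$, an open set $U\ni x$, and $\e>0$ with the ball $B(x,\e)\subseteq U$; put $\delta=\e/(L+1)$. Given $y\in B(x,\delta)$, Theorem \ref{T:LinearlyConnected} supplies a path from $x$ to $y$ of diameter at most $Ld(x,y)<L\delta<\e$; since this path contains $x$, it is contained in $B(x,\e)\subseteq U$. Hence $B(x,\delta)$ lies in the path component of $x$ inside $U$, so that path component is open. As $x$ and $U$ were arbitrary, path components of open subsets of $\partial\cC\Sigma$ are open, which is precisely local path connectivity.

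The main (and essentially the only) work is Theorem \ref{T:LinearlyConnected} itself; the present deduction is soft. The one point requiring care is the passage from ``compact connected set joining $x$ and $y$'' to ``path (indeed arc) joining $x$ and $y$'', which is not automatic in an arbitrary metric space but holds here, as explained in the discussion following Theorem \ref{T:LinearlyConnected} and in \cite[Section 1]{Mackay}. I would also note, for context, that Gabai's original argument for Corollary \ref{C:Gabai} is of a rather different nature, so that routing it through linear connectivity --- a strictly stronger, quantitative property --- genuinely constitutes a new proof.
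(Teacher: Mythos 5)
Your proposal is correct and follows exactly the route the paper intends: the paper treats Corollary \ref{C:Gabai} as an immediate consequence of Theorem \ref{T:LinearlyConnected} together with the remark that the connected sets realizing linear connectivity are paths, and your shrinking-ball argument for local path connectivity is the standard deduction the paper leaves implicit. Nothing is missing.
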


Corollary \ref{C:Gabai} was proven in general by Gabai \cite{Gabai1}. Connectivity was proven previously by  Leininger and Schleimer for $\cC \Sigma_{g,n}$ when $g\geq 4$ or when $g\geq 2$ and $n\geq 1$ \cite{LSconnectivity}, and path connectivity and local path connectivity were proven previously by Leininger, Mj, and Schleimer for $\cC \Sigma_{g,1}$ when $g\geq 2$ \cite{LMS}. These results answer questions of Peter Storm recorded in \cite[Question 10]{Shadows} and \cite[Section 2]{MinskyICM}. 

As part of their proof of quasi-isometric rigidity of curve complexes,  Rafi and Schleimer used Corollary \ref{C:Gabai} to deduce that the union of a number of consecutive spheres depending on the hyperbolicity constant is connected \cite[Proposition 4.1]{RS}. Schleimer also proved without using Corollary \ref{C:Gabai} that when $g\geq 2$, complements of balls in $\cC \Sigma_{g,1}$ are connected \cite{SchleimerEnd}.

Following his work on path connectivity, Gabai later established higher connectivity results and, extending work of Hensel and Przytycki for $(g,n)=(0,5)$, proved that boundaries of curve complexes when $g=0$ are homeomorphic to Nobeling spaces \cite{HP, Gabai2}. In light of \cite{Harer, FiniteRigid, BBM}, it is possible that the techniques in Section \ref{S:05} could  provide a starting point for an attempt to reprove and strengthen such higher connectivity results.  We conjecture that Gabai's higher connectivity results could be strengthened to linear higher connectivity, which may be of interest in light work such as \cite{BK2spheres} and may also be compared to work such as \cite{BD} and \cite{BBlocal}. 

The boundary of the curve complex is naturally homeomorphic to the space of ending laminations \cite{klarreich2018boundary}. Much previous work has taken place in the space of ending laminations, including the recent work of Chaika and Hensel proving connectivity results for spaces of uniquely ergodic and cobounded laminations \cite{chaika2019path}.

 \subsection{Structure of the proof.}
 In contrast to much of the work above, we work directly in the curve complex, using the Bounded Geodesic Image Theorem of Masur-Minsky \cite{MMII}, in the spirit of previous work such as the analysis of dead ends by Birman and Menasco \cite{DeadEnds} and the work of Schleimer cited above \cite{SchleimerEnd}. The core approach of this paper might be described briefly as ``push paths away from the center point $c$". To accomplish this, sometimes we use (a weaker statement than) Theorem \ref{T:SphereConnected} for smaller complexity surfaces, so we induct on the complexity of the surface. The base case for the induction on complexity is the five times punctured sphere, where our argument makes use of pentagons and reveals that some points of $S_r$ might almost be thought of as ``closer" to $S_{r+1}$ than others. 

In medium and high complexity our approach requires that we work with ``essentially non-separating curves", which are curves that are either non-separating or go around a pair of punctures. We prove a number of basic results on what we call the essentially non-separating curve graph, which may be useful for other purposes. 

\subsection{Motivation.}
Masur and Minsky famously proved that curve complexes are hyperbolic, and there are now many proofs of this fact \cite{MMI, U1, U2, U3, HPW}. Moreover, the curve complex exhibits very strong hyperbolicity features even beyond its Gromov hyperbolicity. For example, Dowdall, Duchin, and Masur proved that  the ``generic" pair of points on $S_r$ are distance \emph{exactly} $2r$ apart \cite{Spheres}, and wrote that ``In this sense the curve graph is ``even more hyperbolic than a tree."" (This comment seems tailored to comparison to finite valence trees. One might also suggest that in this sense the curve graph is comparably hyperbolic to an infinite valence regular tree.)  

Furthermore, most of the  spaces closely analogous to curve complexes that arise in the modern study of hierarchical hyperbolicity are quasi-trees \cite{HHS1, HHS2, WhatIs}. Thus there seems to be great tension between tree-like behaviour and the connectivity theorems above. This tension is relevant for the elusive and much studied question of whether convex cocompact surface subgroups of mapping class groups exist (see for example \cite{FarbMosher, Schleininger, Mosher2, reid}). 
Linear connectivity in particular is related to this question since convex co-compact surface subgroups gives rise to quasi-isometrically embedded hyperbolic planes in the curve complex \cite{Shadows,hamenstadt2005word}, and proving linear connectivity is, roughly speaking, part way to establishing the existence of many such planes \cite{Tukia, Mackay}. 

The study of the topology of the Gromov boundary of the curve complex is far from complete; for example its topological dimension is not known in general \cite{Gabai2}.  Nonetheless we feel it is worthwhile to draw attention to the rich additional structure on the boundary. Every nice enough hyperbolic space (including curve complexes) can be recovered up to quasi-isometry via a  cone construction from its boundary with a visual metric \cite[Theorem 8.2]{BS}. In contrast, the mere homeomorphism type of the boundary contains vastly less information about the space. See for example \cite{Bourdon} and \cite{MConfDim} for infinite families of hyperbolic groups whose boundaries are the Menger curve but are  pairwise non-quasi-isometric to each other. In some situations, one might even go so far as to say that, far from being the end goal, the homeomorphism type of the boundary is the vessel that holds more refined information on the hyperbolic group. We feel that the situation for the curve complex is even more pronounced, since in addition to metric structure the boundary supports a rich collection of subsurface projection maps. Linear connectivity is, to our knowledge, one of the first results on the metric structure of the boundary, with another notable result being the work of Bestvina and Bromberg on its capacity dimension \cite{BB}.  


\subsection{Open questions.}
Part of the motivation above concerns the following two questions, which we wish to state explicitly now. 
\begin{question}
Do all non-exceptional curve complexes contain quasi-isometrically embedded hyperbolic planes? 
\end{question}
The work of Leininger and Schleimer gives a positive answer in the case of $\Sigma_{g,1}, g\geq 2$ and surfaces that can be obtained via an appropriate branched cover of such surfaces \cite{Schleininger}. 

\begin{question}
In non-exceptional cases, is the cobounded locus in the boundary of of $\cC \Sigma$ quasi-arc connected? 
\end{question}

Motivated by the existence of hierarchy paths and our desire to connect the study of the boundary to subsurface projections, we also propose the following. 

\begin{question}
For each $\Sigma$ that is non-exceptional, does there exist a $D>0$ such that every pair of points in the Gromov boundary can be joined by a path in the Gromov boundary whose projection to the curve complex of each proper subsurface is an un-parametrized $D$-quasigeodesic? 
\end{question}

We believe a positive answer to these questions would signal a significantly improved understanding of the geometry of the curve complex.

The curve graphs of the exceptional surfaces are both the Farey graph, which, being a quasi-tree, has hopelessly disconnected spheres. All of the individual spheres are disconnected in the low complexity case (Corollary \ref{C:disconnected}), but our analysis leaves open the following. 

\begin{question}\label{SSS}
In the medium complexity case, are individual spheres always connected? In the low complexity case, are unions of two consecutive spheres always connected?
\end{question}

Finally, we note that in the study of $\mathrm{Out}(F_n)$, one is interested in the boundary of the free factor complex $FF_n$, which is known to be path connected and locally path connected when $n\geq 18$ by \cite{bestvina2021connectivity}. Since versions of our main tool, the Bounded Geodesic Image Theorem, are available in that context \cite{Submanifold, Subfactor, Taylor}, it would be interesting to see if our methods could be useful for questions such as the following. 

\begin{question}
When is the Gromov boundary of the free factor complex linearly connected? 
\end{question}

 After this paper was initially released, it was discovered than an extension of our analysis can be used to give a positive answer to Question \ref{SSS} \cite{REU}, and our criterion for linear connectivity was improved and applied to fine curve graphs \cite{YL}.

\bold{Acknowledgements.} The author thanks 
 David Gabai, Jonah Gaster, Sebastian Hensel, Jeremy Kahn, John Mackay, Howard Masur, Yair Minsky, Piotr Przytycki, Kasra Rafi,  Saul Schleimer, Alessandro Sisto, and Richard Webb for helpful conversations. The author thanks 
Sayantan Khan for comments on a previous draft. The  author was partially supported by  NSF Grants DMS 1856155 and 2142712 and a Sloan Research Fellowship.

\section{Sufficient conditions for connectivity}\label{S:sufficient}

\subsection{Unions of spheres.}
Theorem \ref{T:SphereConnected} will be proven with the following  sufficient condition for when unions of some number $w$ of consecutive spheres are connected. As in the case of the curve graph, here we use the notation $S_r=S_r(c)$ for the sphere of radius $r$ in a fixed arbitrary graph $\Gamma$, and we also use $B_\rho(z)$ to denote the set of vertices of distance at most $\rho$ from $z$. A path means a sequence of vertices each adjacent to the next.

\begin{lemma}\label{L:BasicConnected}
Let $\Gamma$ be an arbitrary graph, and let $c\in \Gamma$ be arbitrary. Fix $w>0$. Suppose that for any $r$ the following hold.
\begin{enumerate}
\setlength\itemsep{0.15em}
\item\label{BC:V} For every  $z\in S_r(c)$ and  $x, y\in S_{r+1}(c)\cap B_1(z)$ there exists a path $$x=x_0, x_1, \ldots, x_\ell=y$$ with $$x_i\in S_{r+1}\cup \cdots \cup S_{r+w}$$ for $0\leq i\leq \ell$.
\item\label{BC:adjacent} For every adjacent pair $x,y\in S_{r}$ there exists a path 
$$x=x_0, x_1, \ldots, x_\ell=y$$ with $$x_i\in S_{r+1}\cup \cdots \cup S_{r+w}$$ for $0< i< \ell$.
\end{enumerate}
Then for any $r$, the union $S_r \cup \cdots \cup S_{r+w-1}$  is connected. 
\end{lemma}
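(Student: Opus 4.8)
The plan is to prove, by induction on $r$, that $U_r := S_r \cup \cdots \cup S_{r+w-1}$ is connected; at each step we use the connectivity of the \emph{shifted} union $U_{r-1}$ to produce a path between two given vertices of $S_r$, and then replace the portions of that path lying in $S_{r-1}$ by detours through $S_{r+w-1}$ supplied by hypotheses (\ref{BC:V}) and (\ref{BC:adjacent}) at level $r-1$. Throughout we may assume $\Gamma$ is connected, replacing it if necessary by the connected component of $c$, since every vertex of every $S_\rho$ lies there.

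The first ingredient is a reduction: \emph{$U_\rho$ is connected if and only if all of $S_\rho$ lies in a single connected component of the subgraph induced on $U_\rho$}. For the nontrivial implication one only needs that every $v\in S_{\rho+j}$ with $0\le j\le w-1$ has a geodesic to $c$ whose first $j$ edges pass through $S_{\rho+j-1},\ldots,S_\rho$ and hence stays in $U_\rho$; thus every vertex of $U_\rho$ is joined inside $U_\rho$ to a vertex of $S_\rho$. Taking $\rho=0$ here gives the base case of the induction, as $U_0=B_{w-1}(c)$ and $S_0=\{c\}$.

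For the inductive step, assume $U_{r-1}$ is connected and let $x,y\in S_r$. Since $S_r\subseteq U_{r-1}$, there is a path $P$ from $x$ to $y$ inside $U_{r-1}=S_{r-1}\cup S_r\cup\cdots\cup S_{r+w-2}$. Every vertex of $P$ outside $S_{r-1}$ already lies in $U_r=S_r\cup\cdots\cup S_{r+w-1}$, so it suffices to reroute each maximal subpath $p_a,\ldots,p_b$ of $P$ with all $p_i\in S_{r-1}$. Its flanking vertices $p_{a-1},p_{b+1}$ lie in $U_{r-1}$, are adjacent to $S_{r-1}$, and --- since $S_{r-2}\not\subseteq U_{r-1}$ --- must lie in $S_r$. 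Let $N_k:=S_r\cap B_1(p_k)$, so $p_{a-1}\in N_a$ and $p_{b+1}\in N_b$. Hypothesis (\ref{BC:V}) at level $r-1$ with center $p_k$ says any two vertices of $N_k$ are joined by a path contained in $S_r\cup\cdots\cup S_{r+w-1}=U_r$; hypothesis (\ref{BC:adjacent}) at level $r-1$ applied to the adjacent pair $p_k,p_{k+1}\in S_{r-1}$ gives a path from $p_k$ to $p_{k+1}$ with interior in $S_r\cup\cdots\cup S_{r+w-1}=U_r$, whose interior vertices adjacent to $p_k$ and to $p_{k+1}$ are forced by a distance count to lie in $N_k$ and $N_{k+1}$ respectively. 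Concatenating these --- hop within $N_a$, cross to $N_{a+1}$, hop within $N_{a+1}$, \ldots, cross to $N_b$, hop to $p_{b+1}$ --- yields a path from $p_{a-1}$ to $p_{b+1}$ in $U_r$. Performing this surgery on every $S_{r-1}$-excursion of $P$ converts $P$ into a path from $x$ to $y$ in $U_r$, and the reduction then yields that $U_r$ is connected.

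The heart of the argument, and the main obstacle, is this inductive step, and in particular the recognition that the correct inductive hypothesis is connectivity of $U_{r-1}$ rather than of $U_r$: this is precisely what makes the indices close up, since (\ref{BC:V}) and (\ref{BC:adjacent}) at level $r-1$ deposit their paths and path-interiors exactly in $S_r\cup\cdots\cup S_{r+w-1}=U_r$. One must check with some care that the surgery really produces a walk in $U_r$ --- chiefly that the flanking vertices of an $S_{r-1}$-excursion, and the relevant interior vertices of the (\ref{BC:adjacent})-paths, land in $S_r$ and not in $S_{r-2}$; this uses connectivity of $U_{r-1}$ (so that $P$ never drops below $S_{r-1}$) together with the fact that adjacent vertices differ in distance from $c$ by at most one. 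It is also used in an essential way that the (\ref{BC:adjacent})-detour between $p_k$ and $p_{k+1}$ genuinely has an interior vertex, so that one can cross from $N_k$ to $N_{k+1}$.
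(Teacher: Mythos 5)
Your argument is sound in substance and is a close cousin of the paper's, but the two inductive steps are organized differently. The paper inducts on $r$ with the same base case and the same reduction (every vertex of $S_{r+j}$ drops into $S_r$ along a geodesic toward $c$), but instead of explicitly locating and rerouting the $S_{r}$-excursions of a connecting path, it defines $f(a,b)$ as the minimum, over all paths $P$ joining $a$ to $b$ in $S_r\cup\cdots\cup S_{r+w}$, of $V_r(P)+E_r(P)$ (the number of vertices of $P$ in $S_r$ plus the number of edges of $P$ with both ends in $S_r$), and runs a minimal-counterexample argument showing $f\equiv 0$: hypothesis \eqref{BC:adjacent} removes an $S_r$-edge and hypothesis \eqref{BC:V} removes an isolated $S_r$-vertex. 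These are exactly your two surgeries, but the potential function absorbs the bookkeeping about where the flanking vertices land, which you instead verify by hand (correctly). Both proofs equally require the detour in hypothesis \eqref{BC:adjacent} to have nonempty interior, i.e.\ $\ell\ge 2$; you are right to flag this, and it is the intended reading, since with the degenerate $\ell=1$ path the hypothesis says nothing and the lemma becomes false.

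The one genuine slip is the case $w=1$, which is precisely the case invoked for the high-complexity claim that individual spheres are connected. There $U_{r-1}=S_{r-1}$, so your assertion that $S_r\subseteq U_{r-1}$ fails and you cannot take the initial path $P$ inside $U_{r-1}$. The repair is immediate and uses only tools you already deploy in your reduction: walk from $x$ and from $y$ one step down into $S_{r-1}$ along geodesics to $c$, join the two resulting endpoints inside $S_{r-1}=U_{r-1}$ by the inductive hypothesis, and run your surgery on the resulting path in $S_{r-1}\cup S_r$; the flanking vertices of the single excursion are $x$ and $y$ themselves, and the detours land in $U_r=S_r$ exactly as before. With that one-line fix the proof is complete for all $w>0$.
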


No bound is assumed for the length $\ell$ of the paths. Later, in Proposition \ref{P:LinearlyConnectedGraph}, we will give a version of this lemma that implies linear connectivity of the Gromov boundary of a hyperbolic graph, which will require additionally that the paths above stay close to their endpoints.

For completeness, we provide a proof of the lemma. 

\begin{proof}
We prove that $S_r \cup \cdots \cup S_{r+w-1}$ is connected by induction on $r$. The base case of $r=0$ is trivial, since every vertex can be connected to $S_0=\{c\}$ by a geodesic. 

For the inductive step, suppose  that  $S_{r}\cup \cdots \cup S_{r+w-1}$ is known to be connected. Hence, any pair $a, b\in S_{r+1}\cup \cdots \cup S_{r+w}$ can be joined by a path in  $S_r\cup \cdots \cup S_{r+w}$. 

For any path $P$ in $S_r\cup \cdots \cup S_{r+w}$, let $V_r(P)$ denote the number of vertices on the path in $S_r$, and let $E_r(P)$ denote the number of edges of the path that go between two vertices of $S_r$. For any $a, b\in S_{r+1}\cup \cdots \cup S_{r+w}$, let $f(a,b)$ denote the minimal value of $V_r(P)+E_r(P)$ over all paths $P$ from $a$ to $b$ in $S_r\cup \cdots \cup S_{r+w}$. So $f(a,b)$ measures the failure for $a$ and $b$ to lie in the same connected component of $S_{r+1}\cup \cdots \cup S_{r+w}$, and our goal is to show that always $f(a,b)=0$. 

Suppose in order to find a contradiction that $S_{r+1}\cup \cdots \cup S_{r+w}$ is not connected, and pick $a, b\in S_{r+1}\cup \cdots \cup S_{r+w}$ such that $f(a,b)$ is positive but as small as possible. Consider a path $P$ in $S_{r}\cup \cdots \cup S_{r+w}$ with vertices 
$$a=q_0, q_1, \ldots, q_m=b$$
for which $V_r(P)+E_r(P)=f(a,b)$.  

First suppose that this path contains a pair $q_j, q_{j+1}$ of consecutive vertices in $S_r$. Applying assumption \eqref{BC:adjacent} with $x=q_j, y=q_{j+1}$, we obtain a path $$q_j=x_0, x_1, \ldots, x_\ell=q_{j+1}.$$
The path $P'$ with vertices
$$q_0, q_1, \ldots, q_{j}, x_1, \ldots, x_{\ell-1}, q_{j+1}, \ldots, q_m$$
has $V_r(P')=V_r(P)$ and $E_r(P')=E_r(P)-1$, giving a contradiction. 

Next suppose that the path does not contain a consecutive pair of vertices in $S_r$. So, if $q_j$ denotes a point on the path in $S_r$, we know that $q_{j-1}, q_{j+1}\in S_{r+1}$. Applying assumption \eqref{BC:V} with $x=q_{j-1}, y=q_{j+1}, z=q_j$, we obtain a path $$q_{j-1}=x_0, x_1, \ldots, x_\ell=q_{j+1}.$$
The path $P'$ with vertices
$$q_0, q_1, \ldots, q_{j-1}, x_1, \ldots, x_{\ell-1}, q_{j+1}, \ldots, q_m$$
has $V_r(P')=V_r(P)-1$ and $E_r(P')=E_r(P)$, giving a contradiction. 
\end{proof}

\subsection{The Gromov boundary}
We now give a sufficient condition for the linear connectivity of the Gromov boundary of a graph. 

\begin{proposition}\label{P:LinearlyConnectedGraph}
Let $\Gamma$ be a Gromov hyperbolic graph. Suppose that $c\in \Gamma$, and that the following hold. 
\begin{enumerate}
\setlength\itemsep{0.15em}
\item\label{P:LC:represented} Every point in the Gromov boundary of $\Gamma$ can be represented by a geodesic ray starting at $c$. 
\item\label{P:LC:1farther} Every vertex of $\Gamma$ is adjacent to  point of $\Gamma$ that is 1 farther from $c$. 
\end{enumerate}
Additionally suppose that there is some $D>0$ such that the following hold for all $r\geq 0$. 
\begin{enumerate}
\setcounter{enumi}{2}
\item\label{PLC:V} For every  $z\in S_r(c)$ and  $x, y\in S_{r+1}(c)\cap B_1(z)$ there exists a path $$x=x_0, x_1, \ldots, x_\ell=y$$ with $$x_i\in (\Gamma-B_r(c))\cap B_D(x)$$ for $0\leq i\leq \ell$.
\item\label{PLC:adjacent} For every adjacent pair $x,y\in S_{r}$ there exists a path $$x=x_0, x_1, \ldots, x_\ell=y$$ with $$x_i\in (\Gamma-B_r(c))\cap B_D(x)$$ for $0<i<\ell$.
\end{enumerate}
Then the Gromov boundary of $\Gamma$ is linearly connected.
\end{proposition}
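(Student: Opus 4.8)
I would deduce linear connectivity of $\partial\Gamma$ from the hypotheses by constructing, for each pair of boundary points $\xi,\eta$, an explicit path between them whose diameter is controlled by $d(\xi,\eta)$. The construction mirrors the proof of Lemma~\ref{L:BasicConnected}, but now tracking metric distances so the ``push away from $c$'' moves do not wander too far. Fix a visual metric $d$ on $\partial\Gamma$ with parameter $\epsilon$, so that $d(\xi,\eta)\asymp e^{-\epsilon (\xi\cdot\eta)_c}$ where $(\xi\cdot\eta)_c$ is the Gromov product. Set $R=(\xi\cdot\eta)_c$, so we must produce a connected set of diameter $O(e^{-\epsilon R})$ joining $\xi$ and $\eta$. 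By hypothesis~\eqref{P:LC:represented}, choose geodesic rays $\gamma_\xi,\gamma_\eta$ from $c$ representing $\xi,\eta$; by $\delta$-hyperbolicity they fellow-travel up to radius $\approx R$ and then diverge.

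First I would set up the correspondence between ``staying far from $c$'' and ``being close in $\partial\Gamma$'': a standard hyperbolicity estimate shows that if a path $P$ lies outside $B_r(c)$, then any two points of $P$, and more importantly the endpoints of geodesic rays through points of $P$, have Gromov product $\gtrsim r$ relative to $c$, hence the ``shadow'' of $P$ at infinity has diameter $\lesssim e^{-\epsilon r}$ in $d$. Combined with hypothesis~\eqref{P:LC:1farther}, every vertex at distance $r$ from $c$ can be completed to a geodesic ray, so each vertex $v$ with $d(c,v)=r$ determines a boundary point within distance $O(e^{-\epsilon r})$ of any fixed boundary point represented by a ray through $v$. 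This reduces the problem to a \emph{vertex-level} statement: it suffices to connect a vertex $x_\xi$ on $\gamma_\xi$ at radius $\approx R$ to a vertex $x_\eta$ on $\gamma_\eta$ at radius $\approx R$ by a path that stays outside $B_{R-O(1)}(c)$ and has diameter $O(e^{0})=O(1)$ in the \emph{graph}... no — rather, a path all of whose vertices stay outside $B_{R-C}(c)$, and then take the union of the shadows.

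Next I would run the inductive ``defect-reduction'' argument of Lemma~\ref{L:BasicConnected}, but localized near radius $R$. Concretely: $x_\xi$ and $x_\eta$ are both at distance $\approx R$ from $c$, and since $\gamma_\xi,\gamma_\eta$ fellow-travel up to radius $R$, the vertices $x_\xi$ and $x_\eta$ are within bounded distance of each other; walking along $\gamma_\xi$ in to radius $\approx R-O(1)$, across via the fellow-traveling, and back out along $\gamma_\eta$ gives a path $P_0$ from $x_\xi$ to $x_\eta$ lying in $B_{O(1)}(x_\xi)$ but dipping as deep as radius $R-O(1)$. Now I apply hypotheses~\eqref{PLC:V} and~\eqref{PLC:adjacent} exactly as in Lemma~\ref{L:BasicConnected} to successively eliminate the vertices and edges of $P_0$ that lie on a given sphere $S_r$, starting from the innermost radius and working outward: each application replaces a dip into $S_r$ by a detour through $(\Gamma - B_r(c))\cap B_D(\cdot)$, which keeps the path in a $D$-neighborhood of where it already was and strictly increases the minimal radius touched. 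Since $P_0$ starts in a bounded neighborhood of $x_\xi$, after $O(1)$ rounds (the radius budget to climb is only $O(1)$) we obtain a path $P$ from $x_\xi$ to $x_\eta$ lying entirely in $(\Gamma - B_{R-C}(c))\cap B_{C'}(x_\xi)$ for constants $C,C'$ depending only on $\delta$ and $D$. Replacing each vertex of $P$ by a geodesic ray through it and taking the union of the resulting rays' endpoints (or more carefully, the union of the shadows $\partial\Gamma \cap$ (region at infinity of rays through consecutive vertices)), together with $\xi$ and $\eta$ themselves, yields a connected subset of $\partial\Gamma$ containing $\xi,\eta$ of diameter $O(e^{-\epsilon(R-C)}) = O(e^{\epsilon C}) \cdot e^{-\epsilon R} \asymp d(\xi,\eta)$. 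This gives linear connectivity with $L$ depending only on $\delta$, $\epsilon$, and $D$.

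\textbf{The main obstacle.} The delicate point is the bookkeeping that keeps the two quantities — \emph{radius from $c$} and \emph{diameter in $\Gamma$} — simultaneously under control while running the defect-reduction induction. In Lemma~\ref{L:BasicConnected} one only tracks radius, and paths may balloon; here the detours from~\eqref{PLC:V}--\eqref{PLC:adjacent} are only guaranteed to lie in $B_D$ of the endpoint being fixed, so one must argue that the cumulative effect of $O(1)$ rounds of detours, each of graph-diameter $\le 2D$, keeps everything within a bounded neighborhood of $x_\xi$ — which works precisely because the number of rounds is $O(1)$ (the radius only has to climb a bounded amount, from $R-O(1)$ up to $R-O(1)$, where the two $O(1)$'s are the hyperbolicity fellow-traveling constant). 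A second subtlety is the passage from a graph path to a connected subset of the boundary: one must verify that the shadows of geodesic rays through \emph{adjacent} vertices at radius $\ge R-C$ overlap (or at least are each of diameter $O(e^{-\epsilon R})$ and chain together), which again is a routine but careful hyperbolicity/visual-metric estimate. I would isolate both of these as short lemmas before assembling the main argument.
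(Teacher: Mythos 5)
There is a genuine gap, and it sits exactly where you wave your hands at the end: the passage from a graph path outside $B_{R-C}(c)$ to a \emph{connected} subset of $\partial\Gamma$. The union of the shadows (or ray-endpoints) of the vertices of your path $P$ is a union of sets each of diameter $O(e^{-\epsilon R})$ which ``chain together,'' but a chain of small sets is not a connected set --- individual shadows have no reason to be connected, and a union of overlapping disconnected sets is still disconnected. (The boundary of a tree is the cautionary example: every finite-radius chaining condition can be arranged there at a single scale, yet the boundary is totally disconnected.) What your construction actually proves is that $\xi$ and $\eta$ lie in a set of diameter $O(d(\xi,\eta))$ that is $\epsilon$-chain-connected at the single scale $\epsilon \approx e^{-\epsilon R}$; to get genuine connectivity you must produce chains at \emph{every} scale inside one fixed compact set, which forces you to re-apply hypotheses \eqref{PLC:V} and \eqref{PLC:adjacent} at radius $R$, then $R+1$, then $R+2$, and so on, infinitely often --- not ``$O(1)$ rounds.''

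This infinite refinement is the actual content of the paper's proof. There one works with geodesic rays rather than vertices, defines an $r$-adjacency relation on rays, and proves a key lemma: two $r$-adjacent rays can be joined by a finite sequence of pairwise $(r+1)$-adjacent rays whose boundary points have diameter $\le C_0 a^r$ (this is where \eqref{PLC:V} and \eqref{PLC:adjacent} are used, once per radius). Iterating produces maps $\gamma_r \colon I_r \to \cR$ on nested finite subsets of $[0,1]$; the diameters of the successive corrections form a geometric series $\sum_r C_0 a^{R-E+r}$, which gives both the linear diameter bound $O(a^R)$ and the uniform continuity needed to extend $\gamma$ to a continuous path on all of $[0,1]$. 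Your localized defect-reduction step and your reduction to vertices at radius $\approx R$ are both sound and close in spirit to the first stage of that construction, but as written your argument stops after finitely many stages and therefore does not produce a connected set. To repair it, replace the final ``take shadows'' step with the limiting construction: keep pushing the path outward one sphere at a time forever, control the total drift by the geometric series, and extract the boundary path as a uniform limit.
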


 This can be compared to \cite[Proposition 3.2]{BestvinaMess} in the context of hyperbolic groups.   A significant part of the proof is inspired by \cite[Proof of Proposition 5.2]{MackaySisto}.

\begin{proof}
Let $\cR$ be the set of half-infinite geodesic rays  beginning at $c$. A point of $\cR$ can be viewed as a function $X:\{0,1,2,\ldots\}\to \Gamma$ with $X(i)\in S_i(c)$ and $X(i)$ adjacent to $X(i+1)$. 

The first assumption gives that the Gromov boundary is a quotient of $\cR$ by the equivalence relation of staying bounded distance apart. The second assumption gives that every point of $\Gamma$ lies on an element of $\cR$. 

We use a visual metric $d$ on the Gromov boundary. Visual metrics have the property that there exists $0<a<1$ such that for all $E$ sufficiently large there exist  $C>0$ such that if $r$ is the maximal number such that $d(X(r), Y(r))\leq E$, then $$(1/C)a^{r} \leq d([X],[Y]) \leq C a^{r}.$$
Fix $E\geq 10D$,  $a$ and $C$ so this estimate holds. 

We will say that $X,Y\in R$ are $r$-adjacent if one of the following holds: 
\begin{enumerate}
\item $X(r)=Y(r)$,  or
\item $d(X(r), Y(r))=1$,  or
\item $d(X(r), Y(r+1))=1$, or 
\item $d(X(r+1), Y(r))=1$.
\end{enumerate} 
We will say they are at least $r$-adjacent if they are $s$-adjacent for some $s\geq r$. Similarly we will say they are more than $r$-adjacent if the same holds for some $s>r$. The key lemma is the following. 

\begin{lemma}\label{L:Adjacent}
There exists a constant $C_0$ such that if $X$ and $Y$ are $r$-adjacent, then there is a sequence $X=X_0, X_1, \ldots, X_n=Y$ such that 
$X_i$ and $X_{i+1}$ are at least $(r+1)$-adjacent for all $i$, and 
$$\diam(\{[X_0], [X_1], \ldots, [X_n]\})\leq C_0 a^r.$$
\end{lemma}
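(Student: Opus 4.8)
The plan is to build the connecting sequence from $X$ to $Y$ by working one ``level'' at a time, using hypotheses \eqref{PLC:V} and \eqref{PLC:adjacent} to upgrade $r$-adjacency to $(r+1)$-adjacency while controlling the diameter of the traced-out set in the boundary. First I would reduce to the case where $X$ and $Y$ are actually $r$-adjacent via case (2), i.e. $d(X(r),Y(r))=1$ in $\Gamma$: cases (3) and (4) can be handled by first replacing $Y$ (or $X$) with a ray through $Y(r+1)$ agreeing with $Y$ from level $r+1$ on, which exists by \eqref{P:LC:represented}--\eqref{P:LC:1farther} and changes $[Y]$ not at all; and case (1), where $X(r)=Y(r)$, is the situation to which \eqref{PLC:V} directly applies with $z=X(r)$, $x=X(r+1)$, $y=Y(r+1)$. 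So the core case is: $x:=X(r)$ and $y:=Y(r)$ are adjacent vertices of $S_r$, and I want a sequence of rays interpolating between $X$ and $Y$.

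The main construction: apply \eqref{PLC:adjacent} to the pair $x,y\in S_r$ to get a path $x=x_0,x_1,\dots,x_\ell=y$ with all interior $x_i \in (\Gamma - B_r(c)) \cap B_D(x)$, hence at distance $\geq r+1$ from $c$ and within $D$ of $x$. For each $i$ choose, using \eqref{P:LC:represented}--\eqref{P:LC:1farther}, a geodesic ray $X_i$ from $c$ passing through $x_i$ at level $\mathrm{dist}(c,x_i)$; pin down $X_0 := X$ and $X_n := Y$ at the ends (matching $x_0=x$ and $x_\ell = y$ requires a small argument since $x$ itself has distance exactly $r$ from $c$, so one inserts the original rays $X,Y$ as the first and last terms). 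Consecutive $x_i,x_{i+1}$ are adjacent and both at distance $\geq r+1$ from $c$, so $X_i$ and $X_{i+1}$ are at least $(r+1)$-adjacent via case (2) at the appropriate level; the transition from $X_0=X$ to $X_1$ (and symmetrically at the $Y$ end) is where case (2) at level $r$ could occur, but since $x_1$ is distance $\geq r+1$ from $c$ and adjacent to $x=x_0$, the adjacency there is already at least $(r+1)$-adjacency, so all consecutive pairs in the sequence are at least $(r+1)$-adjacent as required. For the diameter bound, note every $x_i$ lies in $B_D(x)$ and $x = X(r)$, so every $X_i$ is a ray passing within distance $D$ of $X(r)$; feeding this into the visual-metric estimate (with $E \geq 10D$, so $d(X(r), X_i(\cdot)) \leq E$ forces the divergence level to be $\geq r$ up to the additive slack $D$) gives $d([X_i],[X_j]) \leq C a^{r - O(D)} = C_0 a^r$ for a uniform $C_0$ depending only on $a,C,D$. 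The case-(1) variant is identical with \eqref{PLC:V} in place of \eqref{PLC:adjacent}, the only difference being that the endpoints $X(r+1), Y(r+1)$ already sit at level $r+1$ so no endpoint splicing is needed.

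The main obstacle I expect is bookkeeping at the endpoints and with levels: the hypotheses \eqref{PLC:V}, \eqref{PLC:adjacent} produce a path of \emph{vertices} in $\Gamma$, not of rays, and these vertices sit at varying distances from $c$ (between $r+1$ and $r+1+D$), so one must carefully choose a ray through each and verify that consecutive rays land in one of the four adjacency cases at \emph{some} level $\geq r+1$ — this is where the fact that adjacent vertices have distances from $c$ differing by at most $1$ gets used repeatedly. A secondary subtlety is that \eqref{PLC:adjacent} only controls the \emph{interior} vertices $x_i$ for $0<i<\ell$, with $x_0 = x, x_\ell = y \in S_r$; so the sequence of rays must be $X = X_0, X_1', X_1, X_2, \dots, X_{\ell-1}, Y_1', Y = X_n$ where $X_1'$ is a ray through the first interior vertex and the step $X_0 \to X_1'$ is justified by $x_1$ being adjacent to $x = X(r)$ and lying at level $\geq r+1$. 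Once the level-chasing is set up cleanly, the diameter estimate is a direct application of the visual metric inequality with the slack absorbed into $C_0$, and nothing deeper is needed.
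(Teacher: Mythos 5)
Your overall strategy matches the paper's: split into the four adjacency cases, use hypothesis \eqref{PLC:V} for the case $X(r)=Y(r)$ and hypothesis \eqref{PLC:adjacent} for the case $d(X(r),Y(r))=1$, thread a geodesic ray through each vertex of the resulting path, and get the diameter bound from the visual metric estimate. The treatment of the interior of the path and the diameter bound are fine: two consecutive interior vertices lie at levels $s,t\geq r+1$ with $|s-t|\leq 1$, so the rays through them are $\min(s,t)$-adjacent, hence at least $(r+1)$-adjacent.

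The gap is at the endpoints of the core case. When $d(X(r),Y(r))=1$, hypothesis \eqref{PLC:adjacent} gives a path $x=x_0,\dots,x_\ell=y$ whose interior lies outside $B_r(c)$, but $x_0=X(r)$ sits at level exactly $r$ while $x_1$ sits at level exactly $r+1$. The only adjacency you can read off between $X=X_0$ and a ray $X_1$ through $x_1$ is $d(X_0(r),X_1(r+1))=1$, which is $r$-adjacency of type (3), not $(r+1)$-adjacency; your claim that this step ``is already at least $(r+1)$-adjacency'' is false, since nothing relates $X_0(r+1)=X(r+1)$ to $x_1$ or to $X_1(r+2)$. The same problem occurs at the $y$ end. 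The paper's fix is to choose the ray $X_1$ so that it passes through $X(r)$ (possible since $x_1\in S_{r+1}$ is adjacent to $X(r)\in S_r$), so that $X$ and $X_1$ fall into the case $X(r)=X_1(r)$, and then to invoke the already-established Case 1 to interpolate between $X$ and $X_1$ by a chain of $(r+1)$-adjacent rays (and symmetrically at $Y$); the final sequence is the concatenation of these two chains with the interior rays. A smaller slip of the same kind occurs in your reduction of cases (3) and (4): splicing $X$ up to level $r$ with $Y$ from level $r+1$ on produces a ray $Z$ with $Z(r)=X(r)$, so the reduction is to case (1), not case (2), and $Z$ is then directly $(r+1)$-adjacent to $Y$. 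With these repairs your argument coincides with the paper's.
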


\begin{proof}
We proceed in cases as follows. In all cases the diameter bound follows immediately from the properties of the visual metric given above, so we merely indicate how to construct the desired sequence.

\bold{Case 1: $X(r)=Y(r)$.} Assumption \eqref{PLC:V} from the proposition's statement with $z=X(r)=Y(r)$ gives the existence of a path $x_0, \ldots, x_\ell$ from $x_0=X(r+1)$ to $x_n=Y(r+1)$.  For $0<i<\ell$, let $X_i$ be any point in $\cR$ that passes through $x_i$. Then 
$$X=X_0, X_1, \ldots, X_\ell=Y$$
is the desired path.

\bold{Case 2: $d(X(r), Y(r))=1$.} In this case we first apply assumption \eqref{PLC:adjacent} to get a path $x_0, \ldots, x_\ell$ from $x_0=X(r)$ to $x_\ell=Y(r)$,  and again let $X_i$ be any point in $\cR$ that passes through $x_i$. The geodesic ray $X_1$ can be chosen to pass through $X(r)$, and the geodesic ray $X_{\ell-1}$ can be chosen to pass through $Y(r)$. 

By Case 1, we can find a sequence $X=W_0, W_1, \ldots, W_k=X_1$ and a sequence $X_{\ell-1}=Z_0, \ldots, Z_m=Y$. The concatenation 
$$X=W_0, W_1, \ldots, W_k=X_1, X_2, \ldots, X_{\ell-1}=Z_0, \ldots, Z_m=Y$$
is the desired path  

\bold{Case 3: $d(X(r), Y(r+1))=1$.} Let $Z\in \cR$ be defined by $Z(i)=X(i)$ for $i\leq r$ and $Z(i)=Y(i)$ for $i>r$. So $X$ and $Z$ are $i$-adjacent and fall into Case 1, and $Z$ and $Y$ are $(i+1)$-adjacent. By Case 1, we can find a sequence  $X=W_0, W_1, \ldots, W_k=Z$, and the concatenation 
$$X=W_0, W_1, \ldots, W_k=Z, Y$$
is the desired path.

\bold{Case 4: $d(X(r+1), Y(r))=1$.} This is identical to the previous case.
\end{proof}

Let $X_0$ and $Y_0$ be points of $\cR$. Let $R$ be the maximal integer with $$d(X_0(R), Y_0(R))\leq E.$$ To prove the proposition, it suffices  to build a path from  $X_0$ to $Y_0$ of diameter at most $L a^{R}$, for some constant  $L$ not depending on $X_0$ or $Y_0$, and this is what we will do. 
%
%

We will iteratively define functions $\gamma_r: I_r\to  \cR$, where $$I_r\subset [0,1],\quad\quad r=0, 1, 2, \ldots$$ is an increasing nested family of finite sets and $\gamma_r|_{I_w}=\gamma_w$ for $w<r$. 

First pick a path $x_0, x_1, \ldots, x_n$ from $x_0=X_0(R)$ to $x_n=Y_0(R)$ of length $n\leq E$. Set $$I_0=\{i/n: i=0,\ldots, n\}.$$ Define $\gamma_0(0)=X_0$ and $\gamma_0(1)=Y_0$. For $0<i<n$, define $\gamma_0(i/n)$ to be any ray in $\cR$ passing through $x_i$. Note that $\gamma_0(i/n)$ and $\gamma_0((i+1)/n)$ are at least $(R-E)$-adjacent for all $i$, and that 
$$\diam(\gamma(I_0))\leq C_1 a^R$$
for a constant $C_1$ not depending on $X_0$ and $Y_0$. 

Now, assume we have constructed $I_r$ and $\gamma_r$ in such a way that adjacent points of $I_r$ map under $\gamma_r$ to points that are at least $(R-E+r)$-adjacent. We will define $I_{r+1}$ and $\gamma_{r+1}$ piece by piece on each interval of $[0,1]-I_r$. 

Suppose $(a,b)$ is an interval of $[0,1]-I_r$. So we have assumed that $\gamma_r(a)$ and $\gamma_r(b)$ are at least $(R-E+r)$-adjacent.  If these two points are more than $(R-E+r)$-adjacent, we set $$I_{r+1}\cap (a,b)=\{(a+b)/2\}$$ and $\gamma_{r+1}((a+b)/2)=\gamma_r(a)$. Otherwise  they are $(R-E+r)$-adjacent, and we use Lemma \ref{L:Adjacent} to produce a path $\gamma_r(a)=X_0, X_1, \ldots, X_n=\gamma_r(b)$. We set $$I_{r+1}\cap (a,b)=\{a+i(b-a)/n: i=1, \ldots, n-1\},$$ and define $\gamma_{r+1}(a+i(b-a)/n)=X_i$. (The $n$ here is not the same as the $n$ above.)

Set $I=\cup I_r$, and let $\gamma$ be the extension of the $\gamma_r$ to $I$. Tautologically we get 
$$\diam(\gamma(I))\leq \diam(\gamma(I_0)) + 2\sum_{r=1}^\infty \max_{x\in I_r} \min_{y\in I_{r-1}} d(\gamma(x), \gamma(y)).$$
Given the bounds previously obtained, this becomes 
$$\diam(\gamma(I))\leq C_1 a^R + 2\sum_{r=1}^\infty C_0a^{R-E+r}, $$
which is bounded by a constant times $a^R$. Since taking closure doesn't change the diameter of the set, we get the same diameter bound for the closure $\overline{\gamma(I)}$. 

A similar estimate shows that if $(a,b)$ is a component of $[0,1]-I_r$ then $\diam(\gamma(I)\cap [a,b])$ has diameter at most a constant times $a^r$. This implies that $\gamma$ is uniformly continuous as follows. Let $\epsilon>0$. Find $N$ such that if $(a,b)$ is a component of $[0,1]-I_N$ then $\diam(\gamma(I)\cap[a,b]) \leq \epsilon/2$. Let $\delta$ be smaller than the largest interval of $[0,1]-I_N$. If $x,y\in I$ have $|x-y|<\delta$, then we can find $z\in I_N$ within $\delta$ of both $x$ and $y$, and we conclude that 
$$d(\gamma(x), \gamma(y))\leq d(\gamma(x), \gamma(z))+ d(\gamma(z), \gamma(y)) \leq \epsilon/2+\epsilon/2=\epsilon.$$

Since $\gamma$ is uniformly continuous on $I$ it extends to a continuous function on $\overline{I}=[0,1]$. So $\overline{\gamma(I)}$ is connected, giving the result.
\end{proof}

\section{The essentially non-separating curve graph}\label{S:ENS}

The purpose of this section is to define a class of curves (and pairs of disjoint curves) called ``essentially non-separating", which for our purposes will be just as good as non-separating curves, and to justify our later claims that we can arrange for various curves to be essentially non-separating. Our motivation for looking at essentially non-separating curves can be seen by looking ahead to Section \ref{S:BGI}. 

\subsection{Definitions} Our main interest is punctured surfaces, but since cutting curves leads to surfaces with boundary, the following definition will be helpful. 

\begin{definition}\label{D:cut}
A cut surface is a connected surface of finite genus with a finite number of punctures and boundary components, together with a pairing of a subset of the boundary components\footnote{A pairing of a set is an equivalence relation where all equivalence classes have size two. The equivalence classes are called pairs. The boundary components in the subset with the equivalence relation are called paired, and the other boundary components are called unpaired.}, such that the surface admits a hyperbolic metric  where the punctures are cusps and the boundary consists of geodesics, and the surface is not a pants.

A cut surface has type $(h,m,p,u)$ if it has genus $h$, $m$ punctures,  $2p$ paired boundary components, and $u$ unpaired boundary components. 
\end{definition}

 Throughout this section, $\Upsilon=\Upsilon_{h,m,p,u}$ will denote a cut surface of type $(h,m,p,u)$. The condition that $\Upsilon$ is not pants excludes the cases when $h=0$ and $m+2p+u=3$. The reader should have in mind that $\Upsilon$ is a component of the complement of a multi-curve on a surface $\Sigma$ with no boundary components.

\begin{definition}
Let  $\hat{\Upsilon}$ denote the surface of genus $h+p$  with $m$ cusps and $u$ boundary components obtained by gluing each of the $p$ pairs. A multi-curve on $\Upsilon$ is called eventually non-separating if does not separate $\hat{\Upsilon}$. 
\end{definition}

\begin{definition}
A periphery of $\Upsilon$ is one of the punctures or boundary components. 
\end{definition}

\begin{definition}
A curve on $\Upsilon$ is called a pants curve if it bounds a genus 0 subsurface with no boundary components and two punctures.
\end{definition}

\begin{definition}\label{D:ENSgeneral}
A  curve $\alpha\in \cC \Upsilon$ will be called \emph{essentially non-separating}  if it is either 
 eventually non-separating or a pants curve.
\end{definition}

\begin{definition}\label{D:MultiENS}
A multi-curve $\alpha\cup \beta$ with two non-isotopic components will be called \emph{essentially non-separating} if $\alpha$ and $\beta$ are essentially non-separating, and either
\begin{enumerate}
\item $\alpha\cup \beta$ is eventually non-separating, or
\item at least one of $\alpha$ or $\beta$  is a pants curve, or
\item or $\alpha\cup \beta$ bounds a genus zero subsurface with no boundary components and 1 puncture. 
\end{enumerate}
\end{definition} 

\begin{definition}
The essentially non-separating curve graph $\cC_0 \Upsilon$ is the subgraph of $\cC \Upsilon$ whose vertices are essentially non-separating, with an edge from $\alpha$ to $\beta$ if $\alpha\cup \beta$ is essentially non-separating.  
\end{definition}

\subsection{Non-emptiness and 1-density.} Our first result on the essentially non-separating curve graph concerns its non-emptiness. 

\begin{lemma}\label{L:C0NotEmpty}
Suppose $h>0$ or $h=0$ and either $m\geq 2$ or $p\geq 1$. Then $\cC_0 \Upsilon \neq  \emptyset$.
\end{lemma}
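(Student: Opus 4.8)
The claim is that under the hypotheses $h>0$, or $h=0$ with $m\geq 2$ or $p\geq 1$, the essentially non-separating curve graph $\cC_0\Upsilon$ is non-empty. The strategy is to produce, in each case, an explicit curve on $\Upsilon$ that is essentially non-separating in the sense of Definition~\ref{D:ENSgeneral}, i.e. either eventually non-separating (does not separate the glued-up surface $\hat\Upsilon$ of genus $h+p$) or a pants curve.

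First I would dispose of the positive-genus case. If $h\geq 1$, pick a non-separating simple closed curve $\alpha$ on $\Upsilon$ whose homology class is dual to one of the genus handles; such a curve exists because $\Upsilon$ has positive genus, and it remains non-separating in $\hat\Upsilon$ since gluing paired boundary components only adds handles and cannot make a non-separating curve separating (capping or gluing boundary components is a surjection on $H_1$ that does not kill the class of $\alpha$). Hence $\alpha$ is eventually non-separating, so $\cC_0\Upsilon\neq\emptyset$. Next, suppose $h=0$ and $p\geq 1$. Then $\hat\Upsilon$ has genus $p\geq 1$, and one of the curves created by gluing a pair of boundary components --- more precisely, a curve in $\Upsilon$ running ``between'' the two boundary components of a pair --- is non-separating in $\hat\Upsilon$ (it is the belt curve of the handle produced by that gluing). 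Concretely, take an arc in $\Upsilon$ from one paired boundary component to its partner; its image under the gluing closes up to a simple closed curve in $\hat\Upsilon$ that is non-separating. One must check this curve is essential in $\Upsilon$ itself (not boundary-parallel and not null-homotopic), which follows from $\Upsilon$ admitting a hyperbolic metric and not being a pants: there is enough topology for such an arc to be essential. This gives an eventually non-separating curve, hence a vertex of $\cC_0\Upsilon$.

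Finally, suppose $h=0$, $p=0$, and $m\geq 2$. Now $\hat\Upsilon=\Upsilon$ is planar, so there are no eventually non-separating curves, and I must instead exhibit a pants curve: a curve bounding a genus $0$ subsurface with no boundary components and exactly two punctures. Since $m\geq 2$, choose two of the punctures of $\Upsilon$; I claim there is a simple closed curve $\alpha$ bounding a disk-with-two-punctures containing exactly those two punctures and nothing else. The only thing to verify is that such an $\alpha$ is essential and not isotopic into the boundary of $\Upsilon$ --- equivalently, that the complement is not itself a pants. This is exactly where the hypothesis ``$\Upsilon$ is not a pants'' is used: if $\Upsilon$ had type $(0,2,0,1)$ or $(0,3,0,0)$ or $(0,2,0,\text{something})$ making the complement a pants, the curve would be inessential, but those are precisely the excluded pants cases (when $h=0$, $m+2p+u=3$), and here $p=0$ so $m\geq 2$ with $\Upsilon$ not a pants forces $m+u\geq 4$, leaving room for an essential pants curve. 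So $\alpha\in\cC_0\Upsilon$.

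\textbf{Main obstacle.} The genuinely delicate point is the bookkeeping in the last case: confirming that the two-punctured-disk curve is essential given only the combinatorial hypotheses, and checking that the excluded pants configurations are exactly the ones that would obstruct the construction. The positive-genus and $p\geq1$ cases are routine once one sets up the right curve, but the $h=0$, $p=0$ case requires carefully invoking the non-pants hypothesis and ruling out the small-surface exceptions by hand. I would handle this by splitting into subcases according to whether $\Upsilon$ has additional punctures or boundary beyond the two chosen punctures, and in each subcase naming the complementary subsurface explicitly to see it is not a pants.
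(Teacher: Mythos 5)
Your Case 1 ($h\geq 1$: a non-separating curve) and your last case ($h=0$, $p=0$, $m\geq 2$: a pants curve around two punctures, essential because the non-pants hypothesis forces $m+u\geq 4$) are correct and match the paper's proof, which disposes of these in one line each.

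Your case $h=0$, $p\geq 1$ has a genuine gap. A vertex of $\cC_0\Upsilon$ must be a simple closed curve \emph{on $\Upsilon$} that is eventually non-separating, i.e.\ a closed curve of $\Upsilon$ whose image fails to separate $\hat\Upsilon$. The object you construct --- an arc from one paired boundary component to its partner, which ``closes up'' only after the gluing --- is an arc in $\Upsilon$, not a closed curve, so it is not a vertex of $\cC\Upsilon$ at all; the subsequent check that it is ``essential in $\Upsilon$ itself'' is vacuous. The natural repair of taking the boundary of a regular neighbourhood of $b_1\cup(\text{arc})\cup b_2$ also fails: that curve cuts off a pair of pants containing both members of the pair, which after gluing becomes a one-holed torus, so the curve is still separating in $\hat\Upsilon$. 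The correct construction (this is what the paper uses, cf.\ the proof of Lemma~\ref{L:1dense}) is a curve $\gamma$ bounding a pair of pants containing exactly \emph{one} member $b_1$ of a pair together with one other periphery $q\neq b_2$; such a $q$ exists because $\Upsilon$ is not a pants, so $m+2p+u\geq 4$. Then $\gamma$ is essential (neither complementary component is an annulus), and it separates $\Upsilon$ with $b_1$ and $b_2$ on opposite sides, hence becomes non-separating in $\hat\Upsilon$ after the gluing. With that replacement your argument is complete; note also that the sub-case $h=0$, $p\geq 1$, $m\geq 2$ could alternatively be absorbed into your pants-curve case.
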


For the proof, one should keep in mind that $\Upsilon$ has been assumed not to be a pants.

\begin{proof}
If $h>0$ then $\cC_0 \Upsilon$ has non-separating curves. 

If $h=0$  and $m\geq 2$, $\cC_0 \Upsilon$ has pants curves. 

If $h=0$ and  $p\geq 1$, $\cC_0 \Upsilon$   contains separating but eventually non-separating curves. 
\end{proof}

\begin{lemma}\label{L:subgraph}
Suppose that $\Upsilon$ is a subsurface of another cut surface $\Upsilon'$, and assume that each pair of boundary components of $\Upsilon$ are either a pair of boundary components of $\Upsilon'$ or bound an annulus in $\Upsilon'$. Then $\cC_0 \Upsilon$ is a subgraph of $\cC_0 \Upsilon'$.

If  $\alpha\in \cC_0\Upsilon$ and $\beta\in \cC_0 \Upsilon'$ and $\beta$ does not cut $\Upsilon$ and is not a boundary component of $\Upsilon$, then there is an edge in $\cC_0\Upsilon'$ from $\alpha$ to $\beta$. 
\end{lemma}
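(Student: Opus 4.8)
\textbf{Proof plan for Lemma \ref{L:subgraph}.}

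The plan is to verify directly from the definitions that (i) every vertex of $\cC_0\Upsilon$ is a vertex of $\cC_0\Upsilon'$, (ii) every edge of $\cC_0\Upsilon$ is an edge of $\cC_0\Upsilon'$, and then to establish the adjacency claim in the second paragraph of the statement. The key structural observation is that $\hat\Upsilon$ embeds in $\hat\Upsilon'$ in a way compatible with the gluings: since each paired pair of boundary components of $\Upsilon$ is either a paired pair of $\Upsilon'$ or bounds an annulus in $\Upsilon'$, gluing up the pairs of $\Upsilon$ and then including into $\hat\Upsilon'$ gives the same surface (up to isotopy) as including $\Upsilon$ into $\Upsilon'$ first and then gluing up whatever pairs of $\Upsilon'$ survive. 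So a multi-curve that does not separate $\hat\Upsilon$ also does not separate $\hat\Upsilon'$, because a separating curve in the larger surface whose complement it lies in would have to separate the smaller one (a curve that separates $\hat\Upsilon'$ and is disjoint from the image of $\hat\Upsilon$ either separates $\hat\Upsilon$ or lies entirely in one complementary piece, and the annuli/peripheries contributed by the pairing do not change this). I would spell out this ``separation is inherited'' principle carefully once, since it is used repeatedly.

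For the first assertion I would go through the trichotomy in Definitions \ref{D:ENSgeneral} and \ref{D:MultiENS}. If $\alpha\in\cC_0\Upsilon$ is eventually non-separating in $\Upsilon$, the observation above shows it is eventually non-separating in $\Upsilon'$. If $\alpha$ is a pants curve in $\Upsilon$ — it bounds a genus-$0$ subsurface with no boundary and two punctures — that same subsurface sits inside $\Upsilon'$ (the embedding $\Upsilon\hookrightarrow\Upsilon'$ is by hypothesis compatible with the boundary structure, and punctures of $\Upsilon$ are punctures of $\Upsilon'$), so $\alpha$ is a pants curve in $\Upsilon'$ as well. Hence vertices map to vertices. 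For edges: given an edge $\alpha\cup\beta$ of $\cC_0\Upsilon$, both $\alpha$ and $\beta$ are vertices of $\cC_0\Upsilon'$ by what was just shown, and the edge condition in Definition \ref{D:MultiENS} is again a trichotomy — eventually non-separating, at least one of them a pants curve, or the pair bounds a genus-$0$ one-punctured subsurface — each clause of which is preserved by the embedding by exactly the same reasoning (the ``separation is inherited'' principle for the first clause, and the persistence of the relevant genus-$0$ subsurface for the other two).

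For the final assertion, suppose $\alpha\in\cC_0\Upsilon$ and $\beta\in\cC_0\Upsilon'$ with $\beta$ disjoint from $\Upsilon$ (it does not cut $\Upsilon$ and is not a boundary component of $\Upsilon$); I may isotope so that $\beta$ is disjoint from $\Upsilon$ entirely and hence from $\alpha$, so $\alpha\cup\beta$ is an honest multi-curve in $\Upsilon'$, and I must check it satisfies one of the three clauses of Definition \ref{D:MultiENS}. The natural strategy is: if $\alpha$ is a pants curve of $\Upsilon$ then clause (2) holds for $\alpha\cup\beta$ in $\Upsilon'$ immediately; and if $\alpha$ is eventually non-separating in $\Upsilon$, then I claim $\alpha$ is already eventually non-separating in $\Upsilon'$ on its own, and since a curve that is on its own eventually non-separating together with any disjoint essentially non-separating curve still only needs the whole multi-curve to be eventually non-separating — here the potential obstacle appears: $\alpha\cup\beta$ need not be eventually non-separating even when $\alpha$ alone is, because $\beta$ could separate $\hat\Upsilon'\setminus\alpha$. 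I expect this to be the main obstacle. To handle it I would argue that $\beta\in\cC_0\Upsilon'$ means $\beta$ itself is eventually non-separating or a pants curve of $\Upsilon'$; in the pants-curve case clause (2) again applies, and in the eventually-non-separating case I use that $\alpha$, being supported inside $\Upsilon$ and disjoint from $\beta$, does not separate the surface obtained by cutting $\hat\Upsilon'$ along $\beta$ and regluing — more precisely, one shows cutting along an eventually non-separating $\beta$ decreases neither the genus available to $\alpha$ nor connectivity in the relevant sense, so $\alpha\cup\beta$ remains eventually non-separating. Getting this last step clean — possibly by an Euler-characteristic/first-homology count showing that a disjoint union of two individually non-separating curves in a surface fails to be non-separating only in a codimension-one situation that can be ruled out here, or by invoking that $\Upsilon$ has enough genus/punctures because $\cC_0\Upsilon\neq\emptyset$ (Lemma \ref{L:C0NotEmpty}) to route $\alpha$ around $\beta$ — is where I would spend the most care.
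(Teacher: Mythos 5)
Your proposal is correct and follows essentially the same route as the paper: verify the first claim clause by clause from the definitions using the fact that non-separation is inherited by the ambient surface, and for the adjacency claim reduce via the pants-curve clause to the case where both $\alpha$ and $\beta$ are eventually non-separating. The final step you flag as delicate has a one-line resolution, which is exactly what the paper uses: since $\beta$ is eventually non-separating, $\hat\Upsilon'-\beta$ is connected and contains the connected subsurface $\hat\Upsilon$ in which $\alpha$ is non-separating, and a non-separating curve on a connected subsurface is non-separating on the ambient connected surface — no Euler-characteristic or homology count is needed.
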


Here we assume that each puncture of $\Upsilon$ is a puncture of $\Upsilon'$, and as always we assume that the boundary components of $\Upsilon$ are essential in $\Upsilon'$. 

\begin{proof}
The first claim  follows direction from the definitions, after noting that an eventually non-separating multi-curve in $\Upsilon$ is also eventually non-separating in $\Upsilon'$.

The second claim is by definition if either $\alpha$ or $\beta$ is a pants curve. 
So it suffices to note that if both $\alpha$ and $\beta$ are essentially non-separating then so is $\alpha\cup \beta$. That follows from the fact that if $\Sigma$ is a connected surface and $U$ is a connected subsurface of $\Sigma$, and $\gamma$ is a non-separating curve on $U$, then $\gamma$ is non-separating on $\Sigma$, applied to $\Sigma=\hat{\Upsilon}'-\beta$ and $U=\hat{\Upsilon}$ and $\gamma=\alpha$. 
\end{proof}

\begin{lemma}\label{L:1dense}
Suppose either $h\geq 1$ or suppose $h=0$ and one of the following hold: 
\begin{enumerate}
\item $m\geq 3$, or
\item $p \geq 2$ and $m+2p+u\geq 5$, or
\item $p=1$ and $m=2$ and $u=0$. 
\end{enumerate}
Then 
$\cC_0\Upsilon$ is 1-dense in $\cC \Upsilon$.
\end{lemma}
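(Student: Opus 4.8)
The goal is to show that $\cC_0\Upsilon$ is $1$-dense in $\cC\Upsilon$, i.e.\ every curve $\gamma\in\cC\Upsilon$ is either essentially non-separating or adjacent (in $\cC\Upsilon$) to an essentially non-separating curve. The plan is to case-split on what $\gamma$ looks like in $\widehat{\Upsilon}$, i.e.\ on the topology of the two (or one) components of $\widehat{\Upsilon}-\gamma$, lifting the discussion to $\widehat\Upsilon$ since ``eventually non-separating'' is by definition a statement there.

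First I would dispose of the easy case: if $\gamma$ is already eventually non-separating or a pants curve, we are done by Definition~\ref{D:ENSgeneral}. So assume $\gamma$ separates $\widehat\Upsilon$ into two pieces $A$ and $B$, neither of which is a thrice-punctured-sphere-type piece realizing $\gamma$ as a pants curve. The key observation is that in each of the hypothesized regimes the surface $\widehat\Upsilon$ has ``enough room'' that at least one side of $\gamma$ contains a curve $\delta$ disjoint from $\gamma$ that is essentially non-separating and such that $\gamma\cup\delta$ is essentially non-separating — typically by finding a non-separating curve on one side, or by finding a curve cutting off a pair of punctures or a single puncture. Concretely: in the genus case $h\ge 1$, the genus is distributed so at least one side carries positive genus or I can band-sum to produce a non-separating $\delta$; note Lemma~\ref{L:C0NotEmpty} already guarantees $\cC_0\Upsilon\neq\emptyset$, and one wants a neighbor of the \emph{given} $\gamma$. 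In the genus-zero subcases (1)–(3), I would argue combinatorially with punctures and paired boundary: with $m\ge 3$ punctures, some side of any separating $\gamma$ has at least two punctures or I can move a pants curve off $\gamma$; the hypotheses $p\ge 2, m+2p+u\ge 5$ and $p=1,m=2,u=0$ are exactly the thresholds guaranteeing a disjoint eventually-non-separating curve exists (using that gluing a pair of paired boundary components can kill a separation).

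More carefully, the uniform mechanism I would try to extract is: given separating $\gamma$, pick the ``smaller'' complementary piece $A$; by the hypotheses, either $A$ already contains an essentially non-separating curve $\delta$ with $\gamma\cup\delta$ essentially non-separating (cases where $A$ has genus, or $\ge 2$ punctures, or contains a paired-boundary-pair whose gluing is non-separating), or $A$ is small enough (an annulus-type or once-punctured/twice-punctured disk relative to the glued surface) that $\gamma$ itself was already a pants curve or eventually non-separating — contradicting our assumption — or the \emph{other} side $B$ then inherits all the complexity and we run the argument there. The numerical hypotheses in (1)–(3) are precisely what rules out the bad configuration where both sides are too simple. I would organize this as: (a) reduce to $\gamma$ separating and non-pants; (b) show one complementary component, after gluing paired boundaries, has a non-separating curve or a curve enclosing one/two punctures disjoint from $\gamma$; (c) check that the resulting $\gamma\cup\delta$ satisfies one of the three clauses of Definition~\ref{D:MultiENS}, invoking Lemma~\ref{L:subgraph} where it streamlines the bookkeeping about boundary pairings.

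\textbf{Main obstacle.} The crux is a clean handling of the paired boundary components: a curve on $\Upsilon$ can be separating in $\Upsilon$ yet non-separating in $\widehat\Upsilon$, and when I cut along $\gamma$ the pairing data redistributes between the two sides in a way that must be tracked. Getting the case analysis in genus zero to be exhaustive without tedium — especially the borderline count $m+2p+u\ge 5$ with $p\ge 2$, and the isolated case $(p,m,u)=(1,2,0)$ — and verifying in each case that $\gamma\cup\delta$ lands in clause (1), (2), or (3) of Definition~\ref{D:MultiENS} rather than slipping through, is where the real work lies. I expect the genus case $h\ge 1$ to be comparatively routine (produce a non-separating curve on a positive-genus side, or a non-separating curve disjoint from $\gamma$ using the genus of $\widehat\Upsilon$), with all the subtlety concentrated in the planar subcases and the interaction with the boundary pairing.
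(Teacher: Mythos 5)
Your overall strategy matches the paper's: reduce to a separating, non-pants curve $\gamma$ and produce a disjoint curve of $\cC_0\Upsilon$ by a case analysis on genus, punctures, and paired boundary. But two things need correcting. First, you have mis-stated the target. Being $1$-dense only requires that $\gamma$ be disjoint from some $\delta\in\cC_0\Upsilon$ (adjacency in $\cC\Upsilon$); your step (c), verifying that $\gamma\cup\delta$ satisfies a clause of Definition~\ref{D:MultiENS}, is not only unnecessary but impossible: that definition requires \emph{both} components of the multi-curve to be essentially non-separating, and $\gamma$ is assumed not to be. Dropping (c) makes your task strictly easier. Relatedly, in the case $h\geq 1$ you should work in $\Upsilon$ rather than $\hat{\Upsilon}$: since genus is additive under cutting along a separating curve, one component of $\Upsilon-\gamma$ has positive genus and so contains a non-separating curve \emph{of $\Upsilon$}; a non-separating curve found in $\hat{\Upsilon}-\gamma$ need not avoid the gluing locus and hence need not be a curve on $\Upsilon$ at all. (Your "band-sum" fallback is never needed and would in any case risk producing a curve meeting $\gamma$.)

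Second, and more seriously, the planar cases with paired boundary --- which you yourself flag as "where the real work lies" --- are exactly the content of the lemma, and your proposal only asserts that the numerical thresholds work rather than exhibiting the curve. The missing construction is short: after using $m\geq 3$ to reduce to at most one puncture on each side of $\gamma$, find on some component of $\Upsilon-\gamma$ a paired boundary component $b$ together with another periphery $e$ of that component which is not the partner $b'$ of $b$; the hypotheses $p\geq 2$, $m+2p+u\geq 5$ (or $p=1$, $m=2$, $u=0$, where each side of $\gamma$ carries a puncture to play the role of $e$) guarantee such a configuration. The loop $\delta$ around $b$ and $e$ is then eventually non-separating: the pants it cuts off contains $b$ but not $b'$ (else $e=b'$), so after gluing $b$ to $b'$ in $\hat{\Upsilon}$ the two sides of $\delta$ become connected. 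Without this explicit mechanism the genus-zero cases remain unproven, so as written the proposal is an outline of the paper's argument rather than a proof.
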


By definition $1$-dense means that every vertex in $\cC \Upsilon$ is either in $\cC_0\Upsilon$ or adjacent to a vertex in $\cC_0\Upsilon$.

\begin{proof} 
Take $\alpha \in \cC \Upsilon$ not in $\cC_0\Upsilon$. So $\alpha$ is not eventually non-separating and is not a pants curve. 

$\Upsilon-\alpha$ has two components. If $h\geq 1$, one of these has positive genus, and so there is a non-separating curve disjoint from $\alpha$. So assume $h=0$. 

If there is a pair of punctures on one side of $\alpha$, then $\alpha$ is disjoint from the loop around that pair. So assume $m\leq 2$, and that there is at most 1 puncture on each side of $\alpha$.

If $p\geq 2$ and $m+2p+u\geq 5$, there is a paired curve on one component that also has another periphery that is not the other half of the pair. A loop around the paired curve and this periphery gives an eventually non-separating curve. 

The previous argument also works if $p=1, m=2, u=0$, since there is a puncture on each side. 
\end{proof}

\subsection{Connectivity.} We now turn to the connectivity of $\cC_0 \Upsilon$, where we break up the work into a number of lemmas. The first few concern the case $h\geq 1$.

\begin{lemma}\label{L:NonSepPath}
 There exists $C$ depending only on $\Upsilon$ such that the following holds. 
Suppose $\alpha, \beta \in \cC \Upsilon$ are non-separating. Then there is a sequence $$\alpha=x_0, x_1, \ldots, x_k=\beta$$ of non-separating curves, all of which are within distance $C$ of a geodesic from $\alpha$ to $\beta$ in $\cC \Upsilon$, such that: 
\begin{enumerate}
\item if $h\geq 2$, then $x_i$ is disjoint from $x_{i+1}$ for all $i$, and moreover $x_i\cup x_{i+1}$ is non-separating, and
\item if $h=1$, then the intersection number of $x_i$ and $x_{i+1}$ is equal to 1 for all $i$. 
\end{enumerate}
\end{lemma}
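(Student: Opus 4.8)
\textbf{Proof plan for Lemma \ref{L:NonSepPath}.}

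The plan is to start with an arbitrary geodesic $\alpha=\gamma_0,\gamma_1,\ldots,\gamma_k=\beta$ in $\cC\Upsilon$ and replace it, vertex by vertex, with a nearby non-separating curve, while simultaneously controlling how consecutive replacements sit with respect to one another. The underlying principle is that non-separating curves are ``abundant and flexible'': given any essential curve $\gamma$, there is a non-separating curve at bounded distance from $\gamma$ (indeed, disjoint from it when $h\ge 2$, and intersecting it a controlled number of times when $h=1$), and given any two non-separating curves that are disjoint (resp.\ meet once), one can interpolate between them through non-separating curves that remain disjoint (resp.\ meet once) and stay in a bounded neighborhood. Concretely, I would fix once and for all, for each pair of disjoint curves $\delta,\delta'$ in $\Upsilon$, a bounded-diameter ``connecting move'' in the appropriate subsurface; since there are only finitely many topological types of such configurations on $\Upsilon$, the constant $C$ can be taken uniform.

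\textbf{Step 1 (local replacement).} For each interior vertex $\gamma_i$ on the geodesic, choose a non-separating curve $x_i$ disjoint from both $\gamma_{i-1}$ and $\gamma_{i+1}$ if $h\ge 2$, so that $x_i$ lies in the subsurface filled by $\gamma_{i-1}\cup\gamma_{i+1}$; such a curve exists and is at distance $\le 2$ from $\gamma_i$. When $h=1$, instead pick $x_i$ non-separating with $i(x_i,\gamma_i)=1$, which again forces $x_i$ within bounded distance of the geodesic. At the endpoints set $x_0=\alpha$, $x_k=\beta$, which are already non-separating.

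\textbf{Step 2 (interpolation between consecutive replacements).} Now $x_i$ and $x_{i+1}$ are both non-separating and both disjoint from $\gamma_i$ (in the $h\ge 2$ case), hence both live in the complement of $\gamma_i$, a surface of smaller complexity with positive genus; inside that subsurface I would run a short path of non-separating curves from $x_i$ to $x_{i+1}$, arranging — using the surgery/change-of-coordinates principle in a surface of genus $\ge 1$ — that consecutive terms are disjoint with non-separating union (when $h\ge 2$), or meet once (when $h=1$). All these interpolating curves are disjoint from $\gamma_i$, hence within distance $1$ of the geodesic; the number of them, and so the total diameter of each interpolation, is bounded in terms of the topological type of $\Upsilon$ only, giving the constant $C$. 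Concatenating these interpolations over $i=0,\ldots,k-1$ produces the desired sequence.

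\textbf{Main obstacle.} The delicate point is the simultaneous control in Step 2: one must interpolate between $x_i$ and $x_{i+1}$ \emph{within} the complement of $\gamma_i$ (to keep everything near the geodesic) while maintaining the \emph{non-separating} property of the curves and, in the $h\ge 2$ case, of consecutive \emph{unions}; this is exactly where the connectivity and local structure of the non-separating curve complex of a positive-genus surface is needed, and where one must be careful that cutting along $\gamma_i$ does not drop the genus to zero (it cannot, since $\gamma_i$ is a curve on $\Upsilon$ and at worst one side has positive genus — one should choose the $x_i$ to lie on a positive-genus side). In the $h=1$ case the analogous statement about the Farey-type graph of non-separating curves, where adjacency is ``intersection number one,'' plays this role, and the bounded-distance-to-geodesic claim should be extracted from hyperbolicity of $\cC\Upsilon$ together with the fact that the replacement curves fill with the original geodesic vertices.
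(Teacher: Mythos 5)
Your overall strategy (replace geodesic vertices by nearby non-separating curves, then interpolate) is reasonable in spirit, but several concrete steps fail and the hard part is deferred rather than solved. First, in Step 1 a non-separating curve disjoint from both $\gamma_{i-1}$ and $\gamma_{i+1}$ need not exist: if $\gamma_{i-1}$ and $\gamma_{i+1}$ fill a genus-two subsurface $F$ with a single boundary component whose complement is a pants (e.g.\ a twice-punctured disc), then the only curve disjoint from both is $\partial F$, which is separating. Likewise, in the $h=1$ case no simple closed curve meets a separating $\gamma_i$ exactly once, so the prescription $i(x_i,\gamma_i)=1$ cannot be carried out at such vertices. Second, the premise of Step 2 does not follow from Step 1: as constructed, $x_i$ is disjoint from $\gamma_{i-1}$ and $\gamma_{i+1}$ while $x_{i+1}$ is disjoint from $\gamma_i$ and $\gamma_{i+2}$, so the two are not both guaranteed to lie in the complement of any single geodesic vertex, and the proposed interpolation ``inside that subsurface'' has no well-defined arena. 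Third, and most importantly, the interpolation itself --- joining two non-separating curves of a subsurface by a chain of non-separating curves with consecutive unions non-separating (resp.\ consecutive intersection number one) --- is essentially the statement of the lemma in lower complexity, and you give no argument for it; in the paper the connectivity of the essentially non-separating curve graph (Lemmas \ref{L:HighGenusConnected} and \ref{L:Genus1Connected}) is deduced \emph{from} Lemma \ref{L:NonSepPath}, so invoking such connectivity here without an explicit induction on complexity (with base cases such as the one-holed torus, where all essential curves intersect) is circular. Your claim that each interpolating path has length bounded by the topology of $\Upsilon$ is also false --- two curves in the complement of $\gamma_i$ can be arbitrarily far apart --- though this is harmless, since only proximity to the geodesic matters, not path length.

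The paper's route is different and sidesteps these issues: it takes pants decompositions $P_\alpha\ni\alpha$ and $P_\beta\ni\beta$ and a path $P_0,\dots,P_\ell$ in the pants complex all of whose curves stay within bounded distance of the $\cC\Upsilon$-geodesic (a hierarchy-type fact), extracts a non-separating curve $n_i$ from each $P_i$, and notes that consecutive $n_i$ are either disjoint or differ by an elementary move (intersection number $1$ in a one-holed torus or $2$ in a four-holed sphere). The remaining repairs are purely local: for $h\ge 2$, two disjoint non-separating curves with separating union are bridged by a single non-separating curve in a positive-genus complementary component, and an elementary move is bridged by a non-separating boundary component of the supporting one-holed torus or four-holed sphere; for $h=1$ only the $S_{0,4}$ moves need bridging. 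If you want to salvage your approach, you must supply the subsurface connectivity statement by an honest induction on complexity; as written, the proposal assumes what is to be proved.
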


\begin{proof}[Proof sketch]
We start by noting that if $x, y$ are disjoint non-separating curves on a surface of genus at least 2, then either $x\cup y$ is non-separating, or there is a curve $z$ disjoint from both $x$ and $y$ such that both $x\cup z$ and $z\cup y$ are jointly non-separating. (To prove this, suppose that $x\cup y$ is separating. One component has positive genus, and we pick $z$ to be non-separating on that component.) This observation implies that in the $h\geq 2$ case it suffices to find a path where all the $x_i$ are non-separating, without worrying that the $x_i\cup x_{i+1}$ are non-separating, since one can then modify such a path by inserting curves between each pair $x_i, x_{i+1}$ as required to get the result. 

We also note that if $x$ and $y$ are disjoint non-separating curves on a torus, then there is a curve $z$ that is non-separating and intersects each of $x$ and $y$ once. Thus we get that in the torus case, it similarly suffices to find a sequence where the intersection number between $x_i$ and $x_{i+1}$ is at most 1. 

Let $\cP \Upsilon$ be the pants complex of $\Upsilon$. Let $P_\alpha$ be a pants decomposition containing the curve $\alpha$, and let $P_\beta$ be a pants decomposition containing the curve $\beta$. We will use the following fact: There is a path $$P_\alpha=P_0, P_1, \ldots, P_\ell=P_\beta$$ in the pants complex such that each curve of each $P_i$ is bounded distance from the geodesic from $\alpha$ to $\beta$. 

Let $n_i$ be a non-separating curve in $P_i$. Pick $n_0=\alpha$ and $n_\ell=\beta$. By definition of the pants complex, we have that that either $n_i$ and $n_{i+1}$ are disjoint, or intersect once and fill a one-holed torus, or  intersect twice and fill a four-holed sphere. 

First suppose $h\geq 2$. For each $i$ for which $n_i$ and $n_{i+1}$ fill a one-holed torus, let $r_i$ be a non-separating curve in the complement of that torus, and replace $n_i, n_{i+1}$ in the sequence with $n_i, r_i, n_{i+1}$.

For each $i$ for which $n_i$ and $n_{i+1}$ fill a four-holed sphere $S_{0,4}$, note that since the $n_i$ are non-separating, at least one of the 4 boundary components of $S_{0,4}$ must be non-separating. Let $r_i$ be a non-separating boundary component of $S_{0,4}$ and again replace $n_i, n_{i+1}$ in the sequence with $n_i, r_i, n_{i+1}$. This concludes the $h\geq 2$ case. 

In the $h=1$ case, it suffices to consider the pairs $n_i, n_{i+1}$ that fill a $S_{0,4}$. In that case, again we see that at least one boundary component $z$ must be non-separating, and we can replace $n_i, n_{i+1}$ with $n_i, z, n_{i+1}$ in the sequence. 
\end{proof}

\begin{lemma}\label{L:HighGenusConnected}
If $h\geq 2$, then $\cC_0 \Upsilon$ is connected. 

Moreover, there exists $C$ such that any two vertices of $\cC_0\Upsilon$ can be joined by a path in $\cC_0\Upsilon$ that stays within $C$ of a $\cC\Upsilon$ geodesic between the two points. 
\end{lemma}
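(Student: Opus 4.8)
\textbf{Proof plan for Lemma \ref{L:HighGenusConnected}.} The plan is to deduce this from Lemma \ref{L:NonSepPath} together with Lemma \ref{L:1dense}. First I would handle the case where both endpoints $\alpha,\beta\in\cC_0\Upsilon$ are honestly non-separating curves. By Lemma \ref{L:NonSepPath}(1), since $h\geq 2$, there is a sequence $\alpha=x_0,x_1,\dots,x_k=\beta$ of non-separating curves, all within distance $C$ of a $\cC\Upsilon$-geodesic from $\alpha$ to $\beta$, with each $x_i\cup x_{i+1}$ non-separating. Every non-separating curve is eventually non-separating (being non-separating is preserved under gluing paired boundary components, as noted in the proof of Lemma \ref{L:subgraph}), so each $x_i$ is a vertex of $\cC_0\Upsilon$; similarly each $x_i\cup x_{i+1}$ being non-separating means it is eventually non-separating, hence $x_i$ and $x_{i+1}$ are joined by an edge of $\cC_0\Upsilon$. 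Thus this is already a path in $\cC_0\Upsilon$ with the desired geodesic-tracking property.

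Next I would reduce the general case to this one. Given an arbitrary vertex $v\in\cC_0\Upsilon$, it is either eventually non-separating or a pants curve. If $v$ is eventually non-separating but separating in $\Upsilon$ itself, or if $v$ is a pants curve, I claim $v$ is adjacent in $\cC_0\Upsilon$ to an honestly non-separating curve $v'$ that is disjoint from $v$: cutting along $v$, since $h\geq 2$ (and eventually non-separating curves and pants curves each cut off at most the genus contributed by at most one handle via the pairing, leaving positive genus on at least one side — here I should be slightly careful and spell out that at least one complementary component has positive genus) there is a non-separating curve $v'$ in a positive-genus complementary component, and $v\cup v'$ is eventually non-separating (or has a pants-curve component), so $vv'$ is an edge of $\cC_0\Upsilon$. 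Moreover $v'$ can be chosen within bounded distance of $v$ (indeed disjoint from it). Applying this to both endpoints $x,y\in\cC_0\Upsilon$, replacing them by disjoint non-separating $x',y'$, and concatenating the edges $x,x'$ and $y',y$ with the non-separating-to-non-separating path from $x'$ to $y'$ produced above, gives a path in $\cC_0\Upsilon$ from $x$ to $y$. Since $x',y'$ are within distance $1$ of $x,y$, a geodesic from $x'$ to $y'$ fellow-travels a geodesic from $x$ to $y$ up to an additive constant (hyperbolicity of $\cC\Upsilon$, or simply enlarging $C$), so the whole path stays within a uniform constant of a $\cC\Upsilon$-geodesic from $x$ to $y$.

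The main obstacle I anticipate is the bookkeeping in the reduction step: verifying precisely that for every non-honestly-non-separating vertex $v$ of $\cC_0\Upsilon$ one complementary piece of $v$ in $\Upsilon$ has positive genus when $h\geq 2$. For a pants curve this is clear since the genus-$0$ side has only two punctures and no other periphery, so all of the genus $h\geq 2$ (and the gluings) lie on the other side. For an eventually non-separating separating curve $v$: $v$ separates $\Upsilon$ but not $\hat\Upsilon$, which forces some paired boundary components to be split between the two sides; still, the total genus $h\geq 2$ is distributed over the two sides, and even if one side is planar, the other must carry genus $\geq 2$, so a non-separating curve exists there. The only genuinely delicate point is confirming $v\cup v'$ (or one of its components) satisfies Definition \ref{D:MultiENS} — but $v'$ non-separating makes $v\cup v'$ eventually non-separating as long as $v'$ does not become separating after gluing, which holds since $v'$ lies in a positive-genus subsurface, by the subsurface argument used in Lemma \ref{L:subgraph}. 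Everything else is a direct assembly of the cited lemmas, and the geodesic-tracking constant is obtained by taking the maximum of the constant from Lemma \ref{L:NonSepPath} and the hyperbolicity constant of $\cC\Upsilon$.
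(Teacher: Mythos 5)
Your proposal is correct and follows essentially the same route as the paper: reduce to the case of genuinely non-separating endpoints by observing that every vertex of $\cC_0\Upsilon$ is non-separating or adjacent in $\cC_0\Upsilon$ to a non-separating curve (since some complementary component has positive genus when $h\geq 2$), then invoke Lemma \ref{L:NonSepPath}. The paper's own proof is just a terser version of this, leaving the adjacency verification and the geodesic-tracking bookkeeping implicit.
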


\begin{proof}
We first note that every curve in $\cC_0 \Upsilon$ either is non-separating or is adjacent in $\cC_0 \Upsilon$ to a non-separating curve. This follows since a separating   curve divides the surface into two components, at least one of which must have positive genus. So, it suffices to show that any two non-separating curves  $x,y\in \cC_0 \Upsilon$ can be connected to each other. That follows from \ref{L:NonSepPath}. 
\end{proof} 

\begin{lemma}\label{L:Genus1Connected}
Suppose  $h=1$ and one of the following hold:
\begin{enumerate}
\item $m\geq 2$, 
\item $p\geq 1$ and $m+2p+u\geq 3$.
\end{enumerate} 
Then $\cC_0 \Upsilon$ is connected. 

Moreover, there exists $C$ such that any two vertices of $\cC_0\Upsilon$ can be joined by a path in $\cC_0\Upsilon$ that stays within $C$ of a $\cC\Upsilon$ geodesic between the two points. 
\end{lemma}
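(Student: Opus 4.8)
\textbf{Proof proposal for Lemma \ref{L:Genus1Connected}.}

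The plan is to mimic the structure of the proof of Lemma \ref{L:HighGenusConnected}, reducing connectivity of $\cC_0\Upsilon$ to a statement about paths among non-separating curves, but now with the wrinkle that in genus $1$ a pair of disjoint non-separating curves need not form an eventually non-separating multi-curve, so the edges of $\cC_0\Upsilon$ among non-separating curves are governed partly by the ``genus zero subsurface with one puncture'' clause of Definition \ref{D:MultiENS}. First I would observe, exactly as in Lemma \ref{L:HighGenusConnected}, that every vertex of $\cC_0\Upsilon$ is either non-separating or adjacent in $\cC_0\Upsilon$ to a non-separating curve: an essentially non-separating separating curve is a pants curve, and a pants curve in a surface satisfying the hypotheses of the lemma always has a non-separating curve disjoint from it on the ``big'' side (here one uses that $\Upsilon$ is not a pants and that after deleting a pair of punctures the remaining surface still has genus $1$, hence carries a non-separating curve disjoint from the pants curve, and the pants clause of Definition \ref{D:MultiENS} makes this an edge). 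So it suffices to connect any two non-separating curves $x,y$ by a path in $\cC_0\Upsilon$ that fellow-travels a $\cC\Upsilon$-geodesic.

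Next I would invoke Lemma \ref{L:NonSepPath}, case $h=1$, to obtain a sequence $x=x_0,x_1,\ldots,x_k=y$ of non-separating curves, each within a uniform distance $C$ of a $\cC\Upsilon$-geodesic from $x$ to $y$, with $i(x_i,x_{i+1})=1$ for all $i$. The remaining task is purely local: given two non-separating curves $a,b$ on $\Upsilon$ with $i(a,b)=1$, produce a short path from $a$ to $b$ in $\cC_0\Upsilon$ all of whose vertices lie within a bounded distance of $a$ (hence within $C+O(1)$ of the geodesic). Two curves meeting once fill a one-holed torus $T\subset \Upsilon$; let $\partial T$ be its boundary curve. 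Since $a$ and $b$ are each non-separating in $\Upsilon$ and each is a genus-generating curve of $T$, I would find a non-separating curve $z$ on $\Upsilon$ disjoint from $T$ (possible because $\Upsilon$ has genus $1$ and at least one extra periphery beyond $\partial T$, using the numerical hypotheses, unless $\Upsilon = T$ itself is closed-ish — but $\Upsilon$ has genus $1$ and $m+2p+u\geq 3$ in both cases so $\Upsilon\neq T$). Then $a\cup z$ and $z\cup b$ are each either eventually non-separating or fall under clause (2)/(3) of Definition \ref{D:MultiENS}; in any case I would argue they are edges of $\cC_0\Upsilon$, giving a path $a,z,b$. If $z$ cannot directly be shown to form an eventually non-separating pair with $a$, I would instead route through a pants curve or through $\partial T$ (when $\partial T$ bounds a one-punctured or two-punctured piece), again landing in an explicit clause of Definition \ref{D:MultiENS}. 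Lemma \ref{L:subgraph} is the natural tool for checking these edges: identify the relevant sub-cut-surface and appeal to the ``$\beta$ does not cut $\Upsilon$'' clause.

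The main obstacle I anticipate is the bookkeeping in the local step: in genus $1$ the edge relation of $\cC_0\Upsilon$ is genuinely case-dependent (eventually non-separating vs.\ pants-curve-present vs.\ bounding a once-punctured genus-zero piece), and one must verify in each topological configuration of the one-holed torus $T$ and its complement — depending on how many punctures, paired boundary components, and unpaired boundary components lie on the complementary side — that a valid $\cC_0\Upsilon$-edge path of bounded combinatorial length exists and stays near $a$. The hypotheses (either $m\geq 2$, or $p\geq 1$ with $m+2p+u\geq 3$) are exactly what guarantees the complement of $T$ is rich enough to contain the auxiliary curve needed, so the proof amounts to exhausting these cases while keeping all auxiliary curves disjoint from $T$ (hence at distance $\leq 2$ from $a$). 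Once the local step is done, concatenating over $i$ and using the triangle inequality along the near-geodesic sequence from Lemma \ref{L:NonSepPath} yields the uniform fellow-traveling constant, completing the proof.
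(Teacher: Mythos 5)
Your skeleton matches the paper's proof (reduce to non-separating curves, run Lemma \ref{L:NonSepPath} in the $h=1$ case to get consecutive curves with intersection number $1$, then interlace auxiliary curves living in the complement of the one-holed torus they fill), but the local step as you primarily state it fails, for two reasons. First, when $h=1$ there is \emph{no} curve that is non-separating in $\Upsilon$ and disjoint from the one-holed torus $T$ filled by $a$ and $b$: the complement $\Upsilon - T$ is planar, every simple closed curve in it separates $\Upsilon - T$, and since $T$ is attached along a single boundary circle such a curve separates $\Upsilon$. So the $z$ you propose does not exist. Second, even setting that aside, "both non-separating" does not give an edge of $\cC_0\Upsilon$ in genus $1$: if $p=0$, any two disjoint curves that are each non-separating in $\Upsilon$ jointly separate it (cutting along one leaves a planar surface), so $a\cup z$ is never eventually non-separating and you would need clause (2) or (3) of Definition \ref{D:MultiENS} to apply — which it does not for a generic choice. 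This is exactly why the hypotheses split into two cases: the paper takes the auxiliary curve $y_i$ to be a pants curve disjoint from $x_i\cup x_{i+1}$ when $m\geq 2$ (so clause (2) gives the edges for free), and a loop around one paired boundary component together with a periphery not paired to it when $p\geq 1$ and $m+2p+u\geq 3$ (so the union with each $x_i$ is eventually non-separating in $\hat\Upsilon$). Your fallback sentence ("route through a pants curve") is the right move for the first case, but you never supply the construction for the $p\geq 1$, $m\leq 1$ case, and routing through $\partial T$ does not work since $\partial T$ need not be essentially non-separating.

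Two smaller points. Your reduction step asserts that an essentially non-separating curve which separates $\Upsilon$ must be a pants curve; this is false when $p\geq 1$ (a curve separating the two members of a pair is separating in $\Upsilon$ but eventually non-separating). The paper's version of this step uses instead that a separating curve on a genus-$1$ cut surface leaves a genus-$1$ complementary component, which contains a non-separating curve adjacent to it in $\cC_0\Upsilon$. Finally, the fellow-traveling constant does come out exactly as you say once the local step is repaired, since each $y_i$ is disjoint from $x_i\cup x_{i+1}$ and hence within distance $1$ of the near-geodesic sequence.
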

\begin{proof}
First note that every curve in $\cC_0 \Upsilon$  is equal to or adjacent in $\cC_0 \Upsilon$ to a non-separating curve. This follows since the complement of a separating curve must have a component that has genus 1, and that component has a non-separating curve. So, it suffices to show that any two non-separating curves  $x,y\in \cC_0 \Upsilon$ can be connected to each other. 

Let $x=x_0, x_1, \ldots, x_\ell=y$ be the path produced by Lemma \ref{L:NonSepPath}. Note that $\Upsilon-(x_i\cup x_{i+1})$ is connected for all $i$. 

For each $i$, let $y_i$ be a pants curve disjoint from $x_i\cup x_{i+1}$ (if $m\geq 2$) or a loop around one paired boundary component and another boundary component that isn't paired to the first (if $p\geq 1$ and $m+2p+u\geq 3$). Then $$x_0, y_0, x_1, y_1, x_2, y_1, \ldots, x_d$$ is a path in $\cC_0\Upsilon$.
\end{proof}

We now turn to techniques that will work when $h=0$. For any subset $G$ of the peripheries of $\Upsilon$, let $\cC_G\Upsilon$ be the set of curves in $\cC \Upsilon$ that bound a genus 0 surface containing two peripheries, both of which are in $G$. Thus $\cC_G\Upsilon$ corresponds naturally to the simple arcs joining two different elements of $G$. Note that $\cC_G\Upsilon$ is empty if $|G|<2$.

\begin{lemma}\label{L:ArcPath}
There exists $C$ depending only on $\Upsilon$ such that the following is true for any subset $G$ of the peripheries.  
Suppose $\alpha, \beta\in \cC_G \Upsilon$. Then there is a sequence $$\alpha=x_0, x_1, \ldots, x_\ell=\beta$$ of curves in $\cC_G \Upsilon,$ such that for all $i$ the arcs corresponding to $x_i$ and $x_{i+1}$ are disjoint except possibly at their endpoints, and such that each $x_i$ lies within $C$ of the geodesic from $\alpha$ to $\beta$. 
\end{lemma}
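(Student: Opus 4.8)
The statement is the arc-analogue of Lemma \ref{L:NonSepPath}, so the plan is to mimic that proof using the arc complex in place of the pants complex. First I would pass to the arc graph $\mathcal{A}(\Upsilon, G)$ whose vertices are isotopy classes of essential simple arcs with both endpoints on peripheries in $G$, with an edge between two arcs when they have disjoint interiors (and may share endpoints); the curves in $\cC_G\Upsilon$ correspond, via taking a boundary of a regular neighborhood of the arc together with the two peripheries it joins, precisely to such arcs, and this correspondence is coarsely Lipschitz in both directions with respect to $\cC\Upsilon$-distance by Masur--Minsky / Korkmaz--Papadopoulos type estimates. Under this correspondence, the condition ``the arcs for $x_i$ and $x_{i+1}$ are disjoint except possibly at their endpoints'' is exactly adjacency in $\mathcal{A}(\Upsilon,G)$, so it suffices to produce a path in the arc graph from the arc $a$ (corresponding to $\alpha$) to the arc $b$ (corresponding to $\beta$) each of whose vertices stays uniformly close to a $\cC\Upsilon$-geodesic from $\alpha$ to $\beta$.

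To get such a path, I would use the standard surgery argument on arcs: given $a$ and $b$, realize them with minimal intersection, and if they already have disjoint interiors we are done; otherwise surger $b$ along $a$ at an innermost intersection point to produce an arc $b'$ with fewer intersections with $a$, disjoint interior from $b$, and with endpoints still in $G$ (this is where one checks that a surgery of a $G$-arc along a $G$-arc is again a $G$-arc, at least after discarding inessential or peripheral pieces — the two resulting sub-arcs each end on a periphery in $G$, and one of them is essential). Iterating gives a path in $\mathcal{A}(\Upsilon,G)$; the point is that all intermediate arcs are built out of pieces of $a$ and $b$, so the corresponding curves fill a subsurface contained in a bounded neighborhood of $\alpha\cup\beta$, hence by the Bounded Geodesic Image Theorem (or simply by the Lipschitz property of subsurface filling and thinness of triangles) each lies within a uniform constant $C$ of any $\cC\Upsilon$-geodesic from $\alpha$ to $\beta$. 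The constant $C$ depends only on $\Upsilon$, as required, since it only involves the hyperbolicity constant of $\cC\Upsilon$ and the comparison constants between the arc graph and the curve graph.

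The main obstacle, as in Lemma \ref{L:NonSepPath}, is the ``stays close to the geodesic'' clause rather than the bare connectivity: a careless surgery path can wander far from $\alpha\cup\beta$ in the curve graph. The cleanest way I know to control this is to not surger $a$ and $b$ directly but instead to route through the pants/marking complex as in Lemma \ref{L:NonSepPath}: extend the arcs $a,b$ to complete clean markings $\mu_\alpha,\mu_\beta$ of $\Upsilon$ containing $\alpha,\beta$ respectively, take a path $\mu_\alpha = \mu_0,\ldots,\mu_k = \mu_\beta$ in the marking complex all of whose curves lie boundedly close to the $\alpha$-$\beta$ geodesic (this is the quasi-geodesic-in-the-marking-complex fact already invoked in the proof of Lemma \ref{L:NonSepPath}), and at each step replace the marking $\mu_i$ by an arc $x_i \in \cC_G\Upsilon$ supported on $\mu_i$. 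The only thing to verify is that consecutive such $x_i$ can be chosen with disjoint interiors — a local move in the marking complex changes the surface data inside one subsurface, so one can pick $x_i$ and $x_{i+1}$ to agree outside that subsurface and be made disjoint inside it. I would also note the degenerate small-complexity cases for $\Upsilon$ (few peripheries, no genus) need to be checked separately but are handled by the same move since $\cC_G\Upsilon$ is nonempty and connected there by inspection.
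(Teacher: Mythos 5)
The paper does not prove this lemma from scratch: it is quoted directly from Hensel--Przytycki--Webb \cite{HPW}, whose unicorn-path construction is essentially your first (surgery) approach. You correctly identify the right combinatorial move -- concatenating an initial piece of $a$ with a terminal piece of $b$ keeps the endpoints among those of $a$ and $b$, hence the resulting arcs stay in (the arc version of) $\cC_G\Upsilon$, and consecutive such arcs can be taken with disjoint interiors. The gap is in your justification of the clause that the path stays within $C$ of the geodesic. The argument that the intermediate curves ``fill a subsurface contained in a bounded neighborhood of $\alpha\cup\beta$'' so that the Bounded Geodesic Image Theorem finishes is not valid: when $a$ and $b$ fill $\Upsilon$ (the typical situation when $d(\alpha,\beta)$ is large) the subsurface filled by pieces of $a$ and $b$ is all of $\Upsilon$ and yields no constraint, and having intersection number with $a\cup b$ bounded by $i(a,b)$ only bounds the distance to $\alpha$ or $\beta$ logarithmically in $i(a,b)$, not uniformly. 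That unicorn paths are uniformly close to geodesics is precisely the main theorem of \cite{HPW} (their slim-triangle property for unicorn paths, proved by induction using the total order on unicorn arcs); it cannot be replaced by a generic ``built from pieces of $a$ and $b$'' observation. You implicitly concede this and pivot to a marking-complex route, but that version has its own unproved key step: it is unclear what it means for an arc with both endpoints on prescribed peripheries in $G$ to be ``supported on'' a marking, and the claim that arcs subordinate to consecutive markings can be chosen with disjoint interiors is not established (an elementary move alters data inside a subsurface that the arcs may cross in several sub-arcs, and the arcs' endpoints are pinned to specific peripheries).

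The clean repair is the one the paper takes: cite \cite{HPW}. Their unicorn path from $a$ to $b$ consists of arcs whose endpoints are endpoints of $a$ or $b$, so the corresponding curves lie in $\cC_G\Upsilon$; consecutive unicorn arcs are disjoint except possibly at endpoints; and their results place every unicorn arc within a uniform distance of any geodesic between $a$ and $b$ in the arc-and-curve graph, which transfers to a uniform $C$ in $\cC\Upsilon$ via the standard quasi-isometry between the arc graph, the arc-and-curve graph, and the curve graph of $\Upsilon$.
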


The proof of Lemma \ref{L:ArcPath} is contained in  \cite{HPW}. The condition that the arcs are disjoint except possibly at their endpoints can be rephrased by saying that one of the following is true for all $i$: 
\begin{enumerate}
\item $x_i$ and $x_{i+1}$ are disjoint,
\item exactly one periphery enclosed by $x_i$ is also enclose by $x_{i+1}$, and $x_{i}$ and $x_{i+1}$ have intersection number 2, or
\item $x_i$ and $x_{i+1}$ enclose the same peripheries and $x_i$ and $x_{i+1}$ have intersection number 4. 
\end{enumerate}

\begin{lemma}\label{L:Genus0Connected}
Suppose  $h=0$ and one of the following hold:
\begin{enumerate}
\item $p+m\geq 5$, or 
\item $p=4$ and $u\geq 1$ and $m=0$.
\end{enumerate}
Then  $\cC_0 \Upsilon$ is connected. 

Moreover, there exists $C$ such that any two vertices of $\cC_0\Upsilon$ can be joined by a path in $\cC_0\Upsilon$ that stays within $C$ of a $\cC\Upsilon$ geodesic between the two points. 
\end{lemma}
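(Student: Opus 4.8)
The plan is to run the same two-step strategy used in Lemmas \ref{L:HighGenusConnected} and \ref{L:Genus1Connected}: first show that every vertex of $\cC_0\Upsilon$ is equal or adjacent in $\cC_0\Upsilon$ to a curve in a convenient ``model'' family of essentially non-separating curves, and then show that family is connected (quasi-geodesically) using Lemma \ref{L:ArcPath}. Since $h=0$, the natural model family is a set $\cC_G\Upsilon$ for a well-chosen set $G$ of peripheries: recall $\cC_G\Upsilon$ is identified with arcs between two elements of $G$, and a curve in $\cC_G\Upsilon$ is essentially non-separating precisely when the two peripheries it encloses are a puncture–puncture pair (a pants curve), a paired-boundary–other-periphery pair (eventually non-separating), or when the enclosed genus-$0$ piece has a single puncture — so I would take $G$ to be the set of all punctures together with exactly one representative from each pair of paired boundary components, and check case by case that every curve in this $\cC_G\Upsilon$ is essentially non-separating. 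In case (1), $p+m\geq 5$, one has $|G|\geq p+m\geq 5$ (actually $|G|=p+m$), which is comfortably large; in case (2), $p=4,u\geq1,m=0$, one has $|G|=4$, still $\geq 2$, so $\cC_G\Upsilon\neq\emptyset$.

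Step one: given $\alpha\in\cC_0\Upsilon$ not already in $\cC_G\Upsilon$, I need a curve in $\cC_G\Upsilon$ disjoint from $\alpha$. The curve $\alpha$ is either a pants curve, or eventually non-separating but separating, or bounds a genus-$0$ piece with one puncture (the three cases of Definition \ref{D:ENSgeneral}–\ref{D:MultiENS}); in every case $\Upsilon-\alpha$ has two components, and a counting argument using the hypotheses of the two cases shows at least one component contains two peripheries that both lie in $G$ (and are not isotopic to $\alpha$), so a curve enclosing those two is in $\cC_G\Upsilon$ and disjoint from $\alpha$. This is the same flavor of bookkeeping as in Lemma \ref{L:1dense}, and the numerical hypotheses $p+m\geq 5$ resp. $p=4,u\geq1$ are exactly what make it go through (e.g. in case (2), cutting along a curve can put at most two of the four paired curves' representatives on one side together with part of $u$, but $u\geq1$ guarantees enough periphery on the other side). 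I should also record that $\alpha$ together with the produced curve forms an essentially non-separating multi-curve, which is automatic once one of them is a pants curve and otherwise follows from the genus-$0$, one-puncture or eventually-non-separating clauses, possibly after a small adjustment analogous to the $z$-insertion trick in Lemma \ref{L:NonSepPath}.

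Step two: connect any two curves $\alpha,\beta\in\cC_G\Upsilon$ inside $\cC_0\Upsilon$, quasi-geodesically. Apply Lemma \ref{L:ArcPath} to get a sequence $\alpha=x_0,\dots,x_\ell=\beta$ in $\cC_G\Upsilon$ with consecutive arcs disjoint except at endpoints and all $x_i$ within $C$ of a geodesic; since all $x_i\in\cC_G\Upsilon\subset\cC_0\Upsilon$ by step one's analysis of which $\cC_G\Upsilon$-curves are essentially non-separating, it only remains to make consecutive pairs $x_i,x_{i+1}$ adjacent \emph{in} $\cC_0\Upsilon$, i.e. to see that $x_i\cup x_{i+1}$ is an essentially non-separating multi-curve. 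When $x_i,x_{i+1}$ are disjoint this may fail (as in the higher-genus case), so I would interpose a curve $y_i\in\cC_G\Upsilon$ enclosing two peripheries in $G$ chosen disjoint from $x_i\cup x_{i+1}$ — possible by the same counting as step one applied now to the complement of the two disjoint curves — and when they intersect (in $2$ or $4$ points, filling a $4$- or $5$-holed sphere), replace the pair by a short $\cC_0\Upsilon$-path through a periphery-type curve of the small filled subsurface, exactly as done with the $S_{0,4}$ in Lemma \ref{L:NonSepPath}; all inserted curves stay bounded distance from the geodesic since the filled subsurfaces do. The main obstacle I anticipate is the low-periphery endpoint of the range, case (2) with $|G|=4$ and $m=0$: there the counting in steps one and two has the least slack, and one must use the presence of the $u\geq1$ unpaired components carefully — in particular checking that the curves of $\cC_G\Upsilon$ one produces really do land in $\cC_0\Upsilon$ (a loop around a paired curve and an unpaired boundary component is eventually non-separating, which is what saves it) and that the interposed $y_i$ can always be found disjoint from a given disjoint pair. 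Everything else is routine Euler-characteristic and genus bookkeeping of the kind already appearing in this section.
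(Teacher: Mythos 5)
Your skeleton is the paper's: pick a set $G$ of peripheries, show $\cC_G\Upsilon$ is $1$-dense in $\cC_0\Upsilon$, run Lemma \ref{L:ArcPath}, and repair consecutive pairs. But two of your choices break in exactly the tight spots you flagged. First, in case (2) you take $G$ to be one representative of each pair, so $|G|=4$, and you only check $|G|\geq 2$. That is not enough. The repair step for two arcs of $\cC_G\Upsilon$ meeting at one endpoint needs a curve of $\cC_G\Upsilon$ disjoint from both; those arcs fill a disc containing three elements of $G$, so you need at least two elements of $G$ left outside, i.e.\ $|G|\geq 5$. (Likewise, two arcs with the same endpoints fill an annulus containing two elements of $G$, and with only two elements remaining they may fall one on each side of that annulus.) The hypothesis $u\geq 1$ is there precisely so that you can add one unpaired boundary component to $G$ and get $|G|=5$; a loop around a paired and an unpaired boundary is eventually non-separating, so $\cC_G\Upsilon\subset\cC_0\Upsilon$ is preserved. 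You mention this fact but never make the move, and as written your case (2) argument does not close.

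Second, for intersecting consecutive pairs you propose to route through ``a periphery-type curve of the small filled subsurface,'' as with the $S_{0,4}$ in Lemma \ref{L:NonSepPath}. That fails here: the boundary of the filled disc encloses three elements of $G$, and if those are three punctures it is neither a pants curve nor eventually non-separating, hence not even a vertex of $\cC_0\Upsilon$. The correct move is to interpose a curve of $\cC_G\Upsilon$ disjoint from both arcs, living in the \emph{complement} of the filled subsurface (available once $|G|\geq 5$). Once you do this, your worry about disjoint consecutive pairs evaporates: with $G$ containing at most one boundary from each pair, any two disjoint curves of $\cC_G\Upsilon$ automatically form an essentially non-separating multi-curve (either one is a pants curve, or each encloses a paired boundary whose partner lies outside it, so the union does not separate $\hat{\Upsilon}$). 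Your proposed fix for that worry --- interposing yet another $\cC_G\Upsilon$ curve disjoint from both --- would in any case be circular, since adjacency in $\cC_0\Upsilon$ to the interposed curve raises the identical question.
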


While we have not claimed that any of the sufficient conditions in this section are nescessary, we wish to emphasize that Lemma \ref{L:Genus0Connected} in particular is likely not sharp.

\begin{proof}
If $p+m\geq 5$, let $G$ be a set of peripheries containing one of each pair, together with all punctures. If $m=0, p=4, u\geq 1$, let $G$ be a set containing one of each pair and exactly one unpaired boundary. In all cases $|G|\geq 5$, $\cC_G\Upsilon \subset \cC_0 \Upsilon$, and there is an edge in $\cC_0 \Upsilon$ between any two disjoint curves in $\cC_G\Upsilon$. 

\begin{sublemma}
Every curve in $\cC_0\Upsilon$ is equal to or adjacent in $\cC_0\Upsilon$ to an element of $\cC_G\Upsilon$. 
\end{sublemma}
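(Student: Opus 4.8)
The plan is to argue directly from the definition of $\cC_0\Upsilon$. Since $h=0$, a curve $\alpha\in\cC_0\Upsilon$ is either a pants curve or eventually non-separating, and I would treat these two cases separately. If $\alpha$ is a pants curve then $\alpha$ bounds a genus zero subsurface whose two peripheries are punctures; in the case $p+m\ge 5$ all punctures lie in $G$, so in fact $\alpha\in\cC_G\Upsilon$ and there is nothing to prove, while in the case $m=0$ there are no pants curves at all. So the substantive case is that $\alpha$ is eventually non-separating, and there I aim to produce $\beta\in\cC_G\Upsilon$ disjoint from $\alpha$, not isotopic to $\alpha$, with $\alpha\cup\beta$ essentially non-separating; this $\beta$ will be a small curve enclosing exactly two of the $G$-peripheries on one side of $\alpha$.

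Concretely, suppose $\alpha$ is eventually non-separating. Since $\Upsilon$ has genus zero, $\alpha$ separates $\Upsilon$ into subsurfaces $A$ and $B$, and since $\alpha$ does not separate $\hat\Upsilon$ there is a paired pair $\{\delta,\delta'\}$ of boundary components with $\delta\subset A$ and $\delta'\subset B$. Relabelling if necessary I may assume $|G\cap B|\ge |G\cap A|$, hence $|G\cap B|\ge 3$ because $|G|\ge 5$. Now I split into two sub-cases. If $B$ contains at least two punctures, let $\beta$ be a pants curve enclosing two of them; these are in $G$, so $\beta\in\cC_G\Upsilon$, and since $\beta$ is a pants curve while $\alpha$ is not, $\alpha\cup\beta$ is essentially non-separating by clause~(2) of Definition~\ref{D:MultiENS}. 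Otherwise $G\cap B$ contains at most one puncture and at most one unpaired boundary component, so among its $\ge 3$ elements at least one, call it $\epsilon$, is a paired boundary component, with partner $\epsilon'$. Pick any other element $y$ of $G\cap B$ and let $\beta$ bound a pair of pants with boundary components $\epsilon$, $y$, and $\beta$; then $\beta\in\cC_G\Upsilon$ and $\beta$ is disjoint from $\alpha$.

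It remains to see that $\alpha\cup\beta$ is essentially non-separating in the second sub-case. The curves cut $\Upsilon$ into three pieces: $U_1$, the pants enclosed by $\beta$, with peripheries $\epsilon$ and $y$; $U_2=B\setminus U_1$; and $U_3=A$. Since $\epsilon'$ and $\delta'$ are not in $G$, they are not equal to $\epsilon$ or $y$, hence lie outside $U_1$; therefore $\delta\in U_3$ and $\delta'\in U_2$ show that gluing $\{\delta,\delta'\}$ joins $U_2$ to $U_3$, while $\epsilon\in U_1$ and $\epsilon'\in U_2\cup U_3$ show that gluing $\{\epsilon,\epsilon'\}$ joins $U_1$ to the rest. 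Thus $\hat\Upsilon$ minus $\alpha\cup\beta$ is connected, i.e. $\alpha\cup\beta$ is eventually non-separating, hence essentially non-separating. A routine check using $|G\cap B|\ge 3$ shows in both sub-cases that $\beta$ is non-peripheral, not isotopic to $\alpha$, and itself essentially non-separating (in the second sub-case because its enclosed peripheries include the paired curve $\epsilon$).

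The step I expect to be the main obstacle is this last verification in the second sub-case: making sure the chosen $\beta$ genuinely yields an essentially non-separating multi-curve rather than merely a separating one. The delicate point is bookkeeping with the unpaired boundary components, which (apart from the single distinguished one when $p=4$) never lie in $G$ and cannot be used to re-glue $\hat\Upsilon$, so they must not be among the peripheries enclosed by $\beta$ unless accompanied by a paired boundary or a second puncture; the hypothesis $|G|\ge 5$ is precisely what guarantees that on the larger side of $\alpha$ there is enough room to arrange this.
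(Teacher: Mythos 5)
Your overall strategy coincides with the paper's: dispose of pants curves (which lie in $\cC_G\Upsilon$ or do not exist), then for an eventually non-separating $\alpha$ work on the side $B$ with $|G\cap B|\ge 3$ and take $\beta$ to enclose two $G$-peripheries of $B$. But there is a genuine gap in your second sub-case, at the assertion that ``$\epsilon'$ and $\delta'$ are not in $G$.'' For $\epsilon'$ this is automatic, since $G$ contains exactly one member of each pair and $\epsilon\in G$; for $\delta'$ there is no justification at all: the $G$-representative of the separated pair $\{\delta,\delta'\}$ may well be the member lying in $B$. If $\delta'\in G\cap B$ and your unconstrained choice takes $\epsilon=\delta'$ (or $y=\delta'$), the argument genuinely fails, not just formally. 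For example, take $\Upsilon=\Upsilon_{0,0,4,1}$, let $\alpha$ separate exactly the pair $\{\delta_1,\delta_1'\}$, with $A$ containing $\delta_1$ together with both members of pair $2$, and $B$ containing $\delta_1'$, both members of pairs $3$ and $4$, and the unpaired boundary $u\in G$; choose $G$ so that $\delta_1'\in G$. Taking $\epsilon=\delta_1'$ and $y=u$ makes $U_2$ contain only pairs internal to itself, so $U_2$ is a whole component of $\hat{\Upsilon}-(\alpha\cup\beta)$; thus $\alpha\cup\beta$ is separating in $\hat{\Upsilon}$ and, since $m=0$, none of the clauses of Definition~\ref{D:MultiENS} applies, so there is no edge from $\alpha$ to $\beta$ in $\cC_0\Upsilon$.

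The repair is exactly the move the paper makes (it splits on whether $\alpha$ separates one pair or several, and in the single-pair case insists that $\beta$ enclose two elements of $G$ neither of which belongs to that pair): choose $\epsilon$ and $y$ in $G\cap B\setminus\{\delta'\}$. This is always possible: deleting $\delta'$ leaves at least two elements of $G\cap B$, and among them there is still a paired boundary to serve as $\epsilon$, because in the case $p+m\ge 5$ the set $G$ contains no unpaired boundaries while $G\cap B$ has at most one puncture in your sub-case, and in the case $m=0$ there are no punctures while $G$ contains at most one unpaired boundary. With that correction, $\delta'$ really does lie in $U_2$ and your three-piece connectivity computation goes through verbatim.
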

\begin{proof}
Consider a curve $\alpha \in \cC_0\Upsilon - \cC_G \Upsilon$. Since pants curves are in $\cC_G\Upsilon$, $\alpha$ must be eventually non-separating. Since $|G|\geq 5$, one side of $\alpha$ has at least three elements in $G$.

Suppose only a single pair $W$ is separated by $\alpha$. (This means $\alpha$ is ``only barely" eventually non-separating.) One side of $\alpha$ has at least three elements of $G$, so we can find a loop $\beta$ around two elements of $G$, neither of which is $W$. The curve $\alpha$ is adjacent to $\beta$ in $\cC_0\Upsilon$. 

Finally suppose that more than one pair is separated by $\alpha$. Again since one side of $\alpha$ has at least 3 elements of $G$, we can pick $\beta\in \cC_G\Upsilon$ adjacent to $\alpha$ in $\cC_0\Upsilon$. 
\end{proof}

\begin{sublemma}
Suppose $\alpha, \beta\in \cC_G \Upsilon$ are not disjoint but correspond to arcs which are disjoint except possibly at their endpoints. Then there is a curve $\gamma\in \cC_G \Upsilon$ disjoint from both $\alpha$ and $\beta$. 
\end{sublemma}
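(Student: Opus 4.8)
The plan is to argue directly in terms of the arcs $a, b$ corresponding to $\alpha, \beta$, which by hypothesis are disjoint except possibly at shared endpoints. I will produce an arc $c$ joining two peripheries in $G$ that is disjoint from both $a$ and $b$, and then take $\gamma$ to be the corresponding curve; since $c$ joins two elements of $G$ we will automatically have $\gamma \in \cC_G\Upsilon$, and disjointness of the arcs (with a small isotopy near endpoints) gives disjointness of the curves. The three cases of the trichotomy recorded after Lemma \ref{L:ArcPath} need to be handled, according to how many peripheries of $G$ are shared between $\alpha$ and $\beta$.

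First I would consider the regular neighborhood $N$ of $a \cup b$ together with the peripheries they touch, and look at $\Upsilon$ cut along $a \cup b$. The key point is a counting argument: since $|G| \geq 5$ and the arcs $a, b$ together meet at most $4$ distinct peripheries (at most $2$ each), there is at least one periphery $P \in G$ not met by either arc. Moreover, the complement $\Upsilon - N$ of a neighborhood of $a\cup b$ contains $P$ in one of its components, and that component still contains at least one further periphery in $G$ (here one uses that $N$ is planar and small, so cutting along two arcs can only break $\Upsilon$ into few pieces, and $|G|$ is large enough that the piece containing $P$ cannot be a disk or a once-peripheral region in the cases at hand — this is where I would do the bookkeeping case by case on the trichotomy, keeping in mind that in case (1) the arcs are disjoint, in case (2) they share one periphery and one endpoint, and in case (3) they share both peripheries). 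Given such a component with two peripheries in $G$, I take $c$ to be an embedded arc inside it joining them; it is disjoint from $a \cup b$ by construction, and $\gamma := \partial N(c \cup \text{two peripheries})$ lies in $\cC_G\Upsilon$ and is disjoint from $\alpha$ and $\beta$.

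I expect the main obstacle to be the combinatorial verification that, after cutting along $a$ and $b$, some complementary component really does contain two peripheries from $G$ — this could fail a priori if the arcs separated the peripheries of $G$ unfavorably, so one needs the hypothesis $|G| \geq 5$ (and the disjointness-except-at-endpoints condition, which limits how $a \cup b$ can cut the surface) in an essential way. Concretely, cutting $\Upsilon$ (a planar surface, since $h = 0$) along the union of two arcs that meet only at endpoints yields at most three pieces, and at most two of them can "use up" peripheries touched by $a \cup b$; distributing $|G| \geq 5$ peripheries among these pieces forces some piece to contain at least two elements of $G$ that are not endpoints of $a$ or $b$. The remaining care is purely in checking that the isotopy classes work out so that $\gamma$ is essential, non-peripheral, and genuinely in $\cC_G\Upsilon$ rather than, say, isotopic to $\alpha$ or $\beta$, which follows because $c$ can be chosen to enclose a different (unordered) pair of peripheries than either $\alpha$ or $\beta$.
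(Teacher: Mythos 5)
Your approach is essentially the paper's: it too looks at the subsurface filled by the two arcs together with the peripheries they touch and pigeonholes the remaining elements of $G$ into the complementary components, splitting into the intersection-number-$2$ case (arcs share one endpoint, so they fill a disc containing three elements of $G$ whose connected complement contains $|G|-3\geq 2$ elements of $G$) and the intersection-number-$4$ case (arcs share both endpoints, filling an annulus containing two elements of $G$ whose two complementary sides share $|G|-2\geq 3$ elements of $G$). Note only that your blanket count (at most four touched peripheries, at most three pieces) does not close by itself when $|G|=5$; it is precisely the case-by-case bookkeeping you defer to --- together with the hypothesis that $\alpha$ and $\beta$ are not disjoint, which forces the arcs to share at least one endpoint --- that makes the pigeonhole go through.
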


\begin{proof}
First suppose that $\alpha$ and $\beta$ have intersection number 4. In this case they fill an annulus containing two elements of $G$. Since $|G|-2\geq 3$, one side of this annulus contains at least 2 elements of $G$, and we can pick $\gamma$ to be a loop around two such elements of $G$. 

Next suppose that $\alpha$ and $\beta$ have intersection number 2. In this case they fill a disc containing three elements of $G$. The complement of this disc contains at least two elements of $G$, and we can pick $\gamma$ to be a loop around two such elements of $G$. 
\end{proof}

The result now follows easily from Lemma \ref{L:ArcPath} and the sublemmas. 
\end{proof}



  
  
  
 

\section{The Bounded Geodesic Image Theorem}\label{S:BGI}

We start by recalling the main tool for our analysis, the Bounded Geodesic Image Theorem of Masur and Minsky \cite{MMII}. 

\begin{theorem}\label{T:BGI}
For every connected surface $U$, there exists a constant $M$ such that if $V$ is a subsurface of $U$, and $a,b\in \cC U$, and 
$$d_V(a,b)>M,$$
then every geodesic from $a$ to $b$ contains a curve which does not cut $V$. 
\end{theorem}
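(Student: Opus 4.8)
I would prove the contrapositive: produce a constant $M=M(U)$ so that if $\gamma$ is \emph{any} geodesic in $\cC U$ from $a$ to $b$ all of whose vertices cut $V$, then $d_V(a,b)\le M$. (If $a$ or $b$ fails to cut $V$ there is nothing to prove; throughout I assume $V$ is a proper subsurface, so $\partial V\neq\emptyset$, and I use that the components of $\partial V$ are pairwise disjoint, whence $\partial V$ has $\cC U$--diameter at most $1$.) The argument rests on two inputs of very different flavor. The first is elementary: by the usual surgery estimate of Masur--Minsky there is a constant $L_0$ with the property that if $x,y\in\cC U$ are disjoint (or equal) and both cut $V$, then $d_V(x,y)\le L_0$; iterating this along $\gamma$ already gives $d_V(a,b)\le L_0\, d_U(a,b)$, so the whole content lies in controlling long geodesics.

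The second input is the following weaker form of the theorem, call it $(\star)$: there are constants $M_0,H$, depending only on $U$, such that if $p,q\in\cC U$ both cut $V$ and $d_V(p,q)>M_0$, then every geodesic from $p$ to $q$ has a vertex within $\cC U$--distance $H$ of a component of $\partial V$. I would establish $(\star)$ along classical lines: pick a Teichm\"uller geodesic $\mathcal{L}\colon[0,T]\to\mathcal{T}(U)$ from a point where $p$ is short to a point where $q$ is short; by \cite{MMI} the systole map carries $\mathcal{L}$ to a uniform unparametrized quasigeodesic in $\cC U$ from near $p$ to near $q$, and by Gromov hyperbolicity of $\cC U$ every geodesic from $p$ to $q$ is uniformly Hausdorff--close to it. One then invokes the relation between subsurface coefficients and the thin part of Teichm\"uller space (\cite{MMII}, together with Rafi's analysis of short curves along a Teichm\"uller geodesic): were $\partial V$ to stay above the Margulis threshold all along $\mathcal{L}$, the twisting of $\mathcal{L}$ about $\partial V$ and the $\cC V$--projections of the horizontal and vertical foliations would all be uniformly bounded, forcing $d_V(p,q)\le M_0$. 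Hence some component $\delta$ of $\partial V$ is short at some $\mathcal{L}(t)$, so $\delta$ (or a uniformly bounded substitute) appears on the quasigeodesic shadow of $\mathcal{L}$, and fellow--traveling then puts a vertex of \emph{every} geodesic from $p$ to $q$ within $H$ of $\delta$. (Alternatively $(\star)$, and in fact the full theorem with uniform constants, can be obtained combinatorially from unicorn paths in the spirit of \cite{HPW}.)

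Granting $(\star)$, the deduction is soft. Let $\gamma=g_0,\dots,g_n$ be a geodesic with every $g_i$ cutting $V$, and set $I=\{\,i:d_U(g_i,\partial V)\le H\,\}$. If $I=\emptyset$, the contrapositive of $(\star)$ applied to $\gamma$ gives $d_V(a,b)\le M_0$ and we are done, so assume $I\neq\emptyset$ and let $i_-=\min I\le i_+=\max I$. Since $g_{i_-}$ and $g_{i_+}$ both lie within $H$ of the diameter--$\le 1$ set $\partial V$, we get $i_+-i_-=d_U(g_{i_-},g_{i_+})\le 2H+1$, so iterating the surgery estimate along the sub-geodesic between them gives $d_V(g_{i_-},g_{i_+})\le L_0(2H+1)$. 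On the other hand the sub-geodesics $g_0,\dots,g_{i_--1}$ and $g_{i_++1},\dots,g_n$ have every vertex cutting $V$ and, by minimality of $i_-$ and maximality of $i_+$, no vertex within $H$ of $\partial V$; hence the contrapositive of $(\star)$ bounds $d_V(a,g_{i_--1})$ and $d_V(g_{i_++1},b)$ by $M_0$ (the cases $i_-=0$ or $i_+=n$ being trivial). The coarse triangle inequality for the projection $d_V$ then yields $d_V(a,b)\le 2M_0+L_0(2H+1)+O(1)=:M$, a constant depending only on $U$.

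\textbf{Main obstacle.} The hard part is $(\star)$, and more precisely the quantitative link inside it between a large coefficient $d_V(p,q)$ and the Teichm\"uller geodesic descending into the $\partial V$--thin part; this is exactly the delicate heart of the Masur--Minsky theory, where extremal length, boundedness of twisting away from the thin part, and the behaviour of foliation projections to $\cC V$ must all be combined. By contrast, the passage from $(\star)$ to the statement for \emph{every} geodesic is routine, using only the Lipschitz (surgery) estimate and the fact that $\partial V$ has bounded diameter in $\cC U$; the reason the weaker $(\star)$ is the right intermediate statement is that ``$H$--close to $\partial V$'' is robust under the quasigeodesic fellow--traveling supplied by hyperbolicity, whereas ``disjoint from $V$'' is not.
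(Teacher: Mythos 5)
The first thing to say is that the paper does not prove this statement at all: Theorem \ref{T:BGI} is the Bounded Geodesic Image Theorem, quoted verbatim from Masur--Minsky \cite{MMII} (with the uniformity of $M$ attributed to \cite{WebbUniformBGI}), and it is used as a black box throughout. So there is no proof in the paper to compare yours against. Within your proposal, the soft half is correct and cleanly executed: the reduction from your statement $(\star)$ to the full theorem, via the set $I$ of vertices $H$-close to $\partial V$, the bounded diameter of $\partial V$, the Lipschitz (surgery) estimate on the middle block, and the contrapositive of $(\star)$ on the two outer blocks, is a valid argument, and you are right that ``$H$-close to $\partial V$'' is the correct fellow-traveling--robust intermediate notion.

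The genuine gap is that $(\star)$ \emph{is} the theorem, and your sketch of it does not actually supply a proof. Two concrete problems. First, circularity: the dictionary you invoke between large subsurface coefficients and the $\partial V$-thin part of $\mathcal{T}(U)$ --- the hierarchy machinery of \cite{MMII} and Rafi's combinatorial formula for Teichm\"uller geodesics --- is built \emph{on top of} the Bounded Geodesic Image Theorem, so you may only use inputs that are logically upstream of it (Minsky's product regions theorem and Rafi's 2005 characterization of short curves qualify; the later quantitative projection estimates generally do not). Second, even granting the short-curve characterization, it controls $d_V(\mu^+,\mu^-)$ for the vertical and horizontal foliations of $\mathcal{L}$, whereas $(\star)$ requires control of $d_V(p,q)$ for the endpoint \emph{curves}; the passage from one to the other (bounding $d_V(p,\mu^\pm)$ when $p$ is merely short at $\mathcal{L}(0)$ and $V$ is an arbitrary subsurface cut by $p$, including the annular case) is precisely the delicate projection estimate you have not addressed, and it is false without some argument --- a curve short at $X_0$ can have large $V$-coefficient relative to the foliations for subsurfaces $V$ that become ``active'' before time $0$. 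In short, the outline identifies a legitimate alternative route (different from Masur--Minsky's original argument and from Webb's effective proof via tight geodesics and unicorn paths), but the analytic heart is deferred to citations that either presuppose the result or do not quite give the statement needed.
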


Whenever we discuss a subsurface $V$ of a surface $U$, we assume that every component of the boundary $\partial V$ is essential and non-peripheral in $U$.  The quantity $d_V(a,b)$ denotes the distance in $\cC V$ between the subsurface projections to $V$ of $a$ and $b$. One says that a curve cuts $V$ if it cannot be isotoped outside of $V$. 

It is known that the constant $M$ can be taken to be independent of $U$ \cite{WebbUniformBGI}; we do not require that, but for notational simplicity we will assume we have fixed a constant $M$ that works for all $U$. 

In Section \ref{S:ENS}, we gave a number of definitions in the context of cut surfaces. For clarity, we now restate special cases of two of these definitions in the more typical context of surfaces with at most finitely many punctures.

\begin{definition}
A simple curve on a surface is called a pants curve if it bounds a genus 0 surface containing exactly two punctures. 
A curve is called essentially non-separating if it is non-separating or a pants curve. 
\end{definition}

One can rephrase the definition by saying that a pants curve is one whose complement contains at most one non-pants component. Following \cite{SchleimerEnd,DeadEnds,UniformKL, Recipe}, we record the following corollary of Theorem \ref{T:BGI}.

\begin{corollary}\label{C:BGI}
Assume $\Sigma$ is non-exceptional. Every $a\in S_r \cap \cC\Sigma$ that is essentially non-separating is adjacent to a vertex of $S_{r+1}$. Moreover, that vertex can be taken to be essentially non-separating. 
\end{corollary}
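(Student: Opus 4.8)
The plan is to exploit the Bounded Geodesic Image Theorem applied to an annular subsurface. Fix $a \in S_r$ essentially non-separating, and let $A$ be the annulus whose core is $a$. The idea is to find an essentially non-separating curve $b$ disjoint from $a$ such that, for \emph{some} geodesic from $c$ to $b$, the annular projection distance $d_A(c,b)$ exceeds $M$. Since $a$ is disjoint from $b$, it does not cut any proper subsurface whose core avoids $a$; in particular no curve on a geodesic from $c$ to $b$ that fails to cut $A$ can help it stay close to $c$ — more precisely, by Theorem~\ref{T:BGI} applied with $V=A$, if $d_A(c,b)>M$ then every geodesic from $c$ to $b$ contains a curve $\delta$ disjoint from $a$, hence $d(c,b)\geq d(c,\delta)+1 \geq d(c,a)-0+1$... this needs to be set up so that it actually forces $d(c,b)=r+1$ rather than something smaller. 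The cleaner route: take $b$ disjoint from $a$ and note $d(c,b) \leq d(c,a)+1 = r+1$; the content is the lower bound $d(c,b)\geq r+1$, equivalently $d(c,b)>r-1$, which will follow from BGI by choosing $b$ with $d_A(c,b)$ large, since then any geodesic from $c$ to $b$ passes through a curve disjoint from $a$, and that curve has distance $\geq r-1$ from $c$, forcing the geodesic to have length $\geq r$; one then rules out length exactly $r$ by a parity/intersection argument or by taking $d_A$ even larger so the geodesic must pass through two curves disjoint from $a$.

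Concretely, the key step is: given $a$, produce a curve $b$ disjoint from $a$, essentially non-separating, with $d_A(c,b)$ arbitrarily large. For this I would use a large power $T^n$ of the Dehn twist about $a$: set $b_n = T_a^n(b_0)$ where $b_0$ is any essentially non-separating curve disjoint from $a$ (one exists because $\Sigma$ is non-exceptional, so the complement of $a$ contains a non-pants component, hence carries an essentially non-separating curve — here is where Lemma~\ref{L:C0NotEmpty} and the fact that essentially non-separating curves disjoint from $a$ exist gets used, via the classification of $\Sigma-a$). Then $T_a^n$ fixes $a$ and acts on $\cC A$ by translation, so $d_A(c, b_n) = d_A(c, T_a^n b_0) \to \infty$ as $n\to\infty$ (using $d_A(T_a^n x, y) \geq |n| - d_A(x,y) - O(1)$, the standard computation that twisting about $a$ shifts the annular coordinate). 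Each $b_n$ is disjoint from $a$ and is essentially non-separating because $T_a$ is a homeomorphism and $b_0$ was chosen essentially non-separating (twisting preserves the topological type of the pair, hence membership in $\cC_0$). Choosing $n$ large enough that $d_A(c,b_n) > M$ (and in fact $>2M$ to kill the length-$r$ case) gives the curve we want.

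The main obstacle — and the only step requiring care — is nailing down that $d(c,b)$ is \emph{exactly} $r+1$, not $r$ or less. The upper bound is immediate from disjointness of $b$ and $a$. For the lower bound, Theorem~\ref{T:BGI} with $V=A$ and $d_A(c,b)>M$ gives a vertex on any $c$–$b$ geodesic disjoint from $a$, i.e. at distance $\geq r-1$ from $c$; to upgrade $d(c,b)\geq r-1$ to $d(c,b)\geq r+1$ I would argue that if $d_A(c,b)$ is large enough then in fact \emph{two} consecutive vertices near the end of the geodesic are disjoint from $a$ (apply BGI to the geodesic from $c$ to the second-to-last vertex as well, or use that the projection can only drop by a bounded amount per step), forcing the $c$–$b$ geodesic to reach distance $\geq r-1$ and then take at least two more steps; combined with $d(c,b)\leq r+1$ this pins it to $r+1$. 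Finally, $b$ is adjacent to $a$ in $\cC\Sigma$ (disjointness) and in $\cC_0\Sigma$ it is essentially non-separating, so $b$ is the desired vertex of $S_{r+1}$, essentially non-separating, adjacent to $a$.
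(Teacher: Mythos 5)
Your approach has a fundamental flaw: the annular subsurface $A$ with core $a$ is the wrong subsurface to use here, and the construction built on it collapses. The subsurface projection to an annulus is defined only for curves that essentially intersect the core; a curve $b$ disjoint from $a$ has \emph{empty} projection to $A$, so the quantity $d_A(c,b)$ you want to make large is simply undefined for every candidate $b$. Worse, your mechanism for producing such a $b$ --- setting $b_n = T_a^n(b_0)$ with $b_0$ disjoint from $a$ --- produces $b_n = b_0$ for all $n$, since the Dehn twist about $a$ fixes every curve disjoint from $a$. Even granting some repaired version of the setup, BGI applied to $V=A$ only yields a vertex on the geodesic that is \emph{disjoint from} $a$; there are many such curves (including $b$ itself), so this cannot force the geodesic through $a$, and your subsequent attempts to upgrade $d(c,b)\geq r-1$ to $d(c,b)\geq r+1$ (``two consecutive disjoint vertices,'' parity arguments) do not close the gap: two consecutive vertices disjoint from $a$ only give $d(c,b)\geq r$, and one of them could be $b$.

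The correct subsurface is the \emph{complement}: take $V$ to be $\Sigma-a$ if $a$ is non-separating, or the non-pants component of $\Sigma-a$ if $a$ is a pants curve. This is exactly where the hypothesis that $a$ is essentially non-separating enters: it guarantees that the \emph{only} essential curve in $\Sigma$ failing to cut $V$ is $a$ itself. Now choose $b\in\cC V$ (automatically disjoint from $a$) with $d_V(b,c)>M$; this is possible because $\cC V$ has infinite diameter, and Lemma~\ref{L:1dense} lets you take $b$ essentially non-separating. Theorem~\ref{T:BGI} then forces every geodesic from $b$ to $c$ to contain a curve not cutting $V$, which must be $a$, whence $d(b,c)=d(b,a)+d(a,c)=r+1$ with no further argument needed. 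Your instinct to look at the non-pants component of $\Sigma-a$ was right, but you used it only to find the initial curve $b_0$ rather than as the subsurface for the Bounded Geodesic Image Theorem.
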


We include a proof since it inspired our analysis and we will rely frequently on the  techniques it uses. 

\begin{proof}[Proof of Corollary \ref{C:BGI}]
Note that the assumptions give that either $\Sigma-a$ is connected, or its complement consists of a three times punctured sphere and one other component. In the first case, set $V=\Sigma-a$, and in the second set $V$ to be the component of $\Sigma-a$ that is not a three times punctured sphere. Note that $V$ has infinite diameter curve complex. 

Consider any $b\in \cC V$ for which $d_V(b,c)>M$. Theorem \ref{T:BGI} gives that any geodesic from $b$ to $c$ must contain a curve that fails to cut $V$. The only curve that fails to cut $V$ is $a$ itself, so we see that any geodesic from $b$ to $c$ most pass through $a$. Since $a$ and $b$ are adjacent, this implies that $b\in S_{r+1}$.

Lemma \ref{L:1dense} allows us to choose $b$ to be essentially non-separating.
\end{proof}

\begin{remark}
Our use of the  Bounded Geodesic Image Theorem could be avoided, by replacing assumptions like $d_V(b,c)>M$ with assumptions like $d_V(b,c)>C d(b,c)$ and using that subsurface projections are coarse Lipschitz; compare to \cite[Lemma 2.2]{SchleimerEnd}. (Here $C$ is the Lipschitz constant.)
\end{remark}

\section{The key propositions}

In this section we state two key propositions, which will be proven in the next two sections, and apply them to obtain Theorem \ref{T:SphereConnected} and Theorem \ref{T:LinearlyConnected}. 

\subsection{The five times punctured sphere} The first of our key propositions is the following. 

\begin{proposition}\label{P:key05}
In $\cC \Sigma_{0,5}$, for any $c$ and any $r$ the following hold. 
\begin{enumerate}
\setlength\itemsep{0.15em}
\item\label{key05:V} For every  $z\in S_r$ and  $x, y\in S_{r+1}\cap B_1(z)$ there exists a path $$x=x_0, x_1, \ldots, x_\ell=y$$ with $$x_i\in (S_{r+1}\cup S_{r+2}\cup S_{r+3})\cap B_6(z)$$ for $0\leq i\leq \ell$.
\item\label{key05:adjacent} For every adjacent pair $x,y\in S_{r}$ there exists a path $$x=x_0, x_1, x_2, x_3, x_4=y$$ with $$x_i\in S_{r+1}\cup S_{r+2}$$ for $0<i<4$.
\end{enumerate}
\end{proposition}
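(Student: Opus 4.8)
The plan is to work concretely in $\cC\Sigma_{0,5}$, using the Bounded Geodesic Image Theorem (Theorem \ref{T:BGI}) together with the exceptional isomorphism $\cC\Sigma_{0,5}\cong\cC\Sigma_{1,2}$ only as needed. The overarching strategy, as advertised in the introduction, is to ``push paths away from $c$." Recall that every curve in $\Sigma_{0,5}$ is a loop around either two or three of the five punctures, and that the curve graph here is the ``pentagon graph": vertices correspond to the $10$ two-puncture loops (equivalently their complementary three-puncture loops), and the combinatorics are governed by pentagons. I expect the key geometric input to be: given a vertex $a$, the complement $\Sigma_{0,5}-a$ has a component $V$ with infinite-diameter curve graph (a four-times-punctured sphere), and by the argument in the proof of Corollary \ref{C:BGI}, any $b\in\cC V$ with $d_V(b,c)>M$ lies in $S_{r+1}$ whenever $a\in S_r$. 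So to push a vertex outward, it suffices to replace it by a disjoint curve whose $V$-projection is far from $c$ in $\cC V$; since $\cC V$ has infinite diameter and $\cC V\cong$ Farey graph, such replacements are abundant and can be made to control how far one moves in $\cC\Sigma_{0,5}$.

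For part \eqref{key05:V}: given $z\in S_r$ and $x,y\in S_{r+1}\cap B_1(z)$, first I would handle the case $x=y$ (trivial) and the case $d(x,y)=1$ separately from $d(x,y)=2$. When $x$ and $y$ are both adjacent to $z$, the three curves $x,y,z$ sit inside a small configuration; I would enumerate the possibilities for how $x,y$ lie relative to $z$ in the pentagon structure and, in each case, exhibit an explicit short path from $x$ to $y$ consisting of curves disjoint from $c$ (hence in $\Gamma-B_r$, i.e. in $S_{r+1}\cup S_{r+2}\cup\cdots$) and within $B_6(z)$. The radius-$6$ and the ``three spheres" bound should both come out of the fact that in the pentagon graph a bounded-diameter neighborhood of $z$ only meets finitely many combinatorial types, and that $c$ being at distance $r$ from all of $x,y$ forces the intermediate curves—chosen disjoint from the relevant $c$-far-projecting curve—to land in $S_{r+1}\cup S_{r+2}\cup S_{r+3}$. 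The honest work is the case analysis: I would organize it by the intersection pattern of $\{x,y,z\}$ and by which punctures $c$ separates, using BGI each time to certify that a curve disjoint from $c$ and with large subsurface projection to the appropriate $\Sigma_{0,4}$ is exactly $1$ farther from $c$ than its neighbor.

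For part \eqref{key05:adjacent}: given adjacent $x,y\in S_r$, I want a path $x,x_1,x_2,x_3,y$ of length exactly $4$ with $x_1,x_2,x_3\in S_{r+1}\cup S_{r+2}$. Here the pentagon structure is decisive: $x$ and $y$ adjacent means $x\cup y$ fills a $\Sigma_{0,4}$ subsurface (they are the two ``diagonals" meeting the appropriate punctures), and the unique pentagon through the edge $xy$ has three other vertices $u_1,u_2,u_3$ forming a path $x,u_1,u_2,u_3,y$. I would like to take $x_i=u_i$, but I must verify the radius condition. Using BGI with $V=\Sigma_{0,5}-x$ (and symmetrically for $y$), and the fact that $u_1$ is disjoint from $x$ while $u_3$ is disjoint from $y$, one gets $u_1\in B_1(x)\subset S_{r-1}\cup S_r\cup S_{r+1}$ and similarly for $u_3$; the middle vertex $u_2$ is within distance $2$ of each of $x,y$. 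The one thing that could fail is $u_1$ or $u_2$ landing in $S_{r-1}$, i.e., being \emph{closer} to $c$; I would rule this out, or repair it, by choosing the pentagon adapted to $c$—concretely, picking the edge-disjoint completion so that each $u_i$ has large projection to a subsurface missing $c$, which is possible because $c$ is a single curve and the pentagon offers enough freedom. The main obstacle, and the part requiring genuine care rather than routine checking, is precisely this last point: ensuring the intermediate pentagon vertices never dip back toward $c$, which is why the statement allows the union $S_{r+1}\cup S_{r+2}$ rather than $S_{r+1}$ alone and why in part \eqref{key05:V} three consecutive spheres are needed. I expect the resolution to hinge on a careful use of BGI to show that among the curves disjoint from a given neighbor, those whose $c$-distance drops are exactly those with small subsurface projection away from $c$, and that the pentagon always contains enough vertices avoiding that bad locus.
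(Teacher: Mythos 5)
Part \eqref{key05:adjacent} of your proposal is essentially the paper's argument (Lemma \ref{L:Pentagon2}): complete the edge $xy$ to a pentagon and certify, via the Bounded Geodesic Image Theorem applied to the annular projections $d_x$ and $d_y$ after large Dehn twists about $x$ and $y$, that the three remaining vertices lie in $S_{r+1}\cup S_{r+2}$. Two slips there: there is no ``unique pentagon through the edge $xy$'' (Dehn twisting produces infinitely many), and adjacent curves in $\cC\Sigma_{0,5}$ are disjoint, so they do not fill a $\Sigma_{0,4}$; but your fallback of \emph{choosing} the completion so that the new vertices have large projections is exactly the needed fix.

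Part \eqref{key05:V} has a genuine gap. You propose to ``enumerate the possibilities for how $x,y$ lie relative to $z$'' and exhibit explicit short paths, on the grounds that a bounded neighborhood of $z$ meets only finitely many combinatorial types. That is false: $x$ and $y$ both lie in the link of $z$, which is a Farey graph, and pairs of vertices there fall into infinitely many orbits under the stabilizer of $z$ (their distance in $\cC(\Sigma-z)$ is unbounded even though $d(x,y)\le 2$ in $\cC\Sigma$), so the connecting path must in general be long and cannot be produced by a finite case analysis. The paper's route is different: it first proves that $S_2(z)\cup S_3(z)$ is connected for an arbitrary center (Lemma \ref{L:S23connected}, which itself rests on Lemmas \ref{L:S12connected}, \ref{L:ForwardFacing} and the pentagon lemmas), takes a path from $x$ to $y$ in that set, and applies a large power of the Dehn twist about $z$ so that every vertex of the path has $d_z(\cdot,c)>M$ --- an \emph{annular} projection to $z$ itself, not a projection to a $\Sigma_{0,4}$ --- forcing geodesics to $c$ to pass through $B_1(z)$ (Lemma \ref{L:05key}). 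Even then the path may dip into $S_r$, and the remaining work is a staged bootstrap through the stratification of $S_r$ by ``forward facing'' and ``unique backtracking'' vertices (Lemma \ref{L:LinkAlmostConnected}); that is where the constant $6$ and the three consecutive spheres actually come from. None of these mechanisms --- the $S_2\cup S_3$ connectivity, the twist about $z$, or the stratified bootstrap --- appears in your proposal, and without something playing their role the argument does not close. (Also, a curve disjoint from $c$ lies in $S_1(c)$, not in $\Gamma-B_r(c)$; presumably you meant a curve with large projection away from $c$, but as written that step is incorrect.)
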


Proposition \ref{P:key05} implies the low complexity cases of Theorem \ref{T:SphereConnected} as follows. 

\begin{proof}[Proof Theorem \ref{T:SphereConnected} in the cases $(g,n)\in \{(0,5), (1,2)\}$]
Since  $\cC\Sigma_{0,5}$ and $\cC\Sigma_{1,2}$ are isomorphic, it suffices to prove the result for $\cC\Sigma_{0,5}$. The result follows using Lemma \ref{L:BasicConnected}.
%
%
\end{proof}

\subsection{A subgraph avoiding dead ends.}\label{SS:Cc} In the higher complexity case, for technical reasons including the existence of dead ends \cite{DeadEnds}, we found it convenient to phrase many of our arguments in a rather specific subgraph of the curve graph. For $c\in \cC\Sigma_{g,n}$, define the subgraph $\cC_c \Sigma_{g,n}$ of $\cC\Sigma_{g,n}$ as follows. 
\begin{itemize}
\setlength\itemsep{0.15em}
\item A vertex $\alpha \in \cC\Sigma_{g,n}$ will belong to $\cC_c \Sigma_{g,n}$ if it is essentially non-separating  or if $\alpha=c$. 
\item If $\alpha, \beta$ are vertices of $\cC_c\Sigma_{g,n}$ that are joined by an edge in $\cC \Sigma_{g,n}$, we join them by an edge in $\cC_c \Sigma_{g,n}$ if either $d(c,\alpha)\neq d(c,\beta)$ or if $\Sigma_{g,n}-(\alpha\cup \beta)$ has at most one component that is not a three times punctured sphere.
\end{itemize}

Thus, the vertices of $\cC_c\Sigma_{g,n}$ are $\{c\}$ union the vertices of the essentially non-separating curve graph $\cC_0\Sigma_{g,n}$, and disjoint curves are joined by an edge if they have different distances to $c$ or if they are joined by an edge in $\cC_0\Sigma_{g,n}$.

\begin{lemma}\label{L:AlmostGeodesic}
Every $a\in S_r$ can be joined to $c$ by a geodesic $c=x_0, x_1, \ldots, x_r=a$ where all $x_i, i<r$ are in $\cC_c\Sigma$. 
\end{lemma}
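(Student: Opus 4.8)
The statement to prove is Lemma~\ref{L:AlmostGeodesic}: every vertex $a\in S_r$ can be joined to $c$ by a geodesic whose intermediate vertices all lie in $\cC_c\Sigma$. The plan is to start with an arbitrary geodesic $c=x_0, x_1, \ldots, x_r=a$ in $\cC\Sigma$ and modify each intermediate vertex $x_i$ ($0<i<r$) so that it becomes essentially non-separating, while preserving the property that consecutive vertices are joined by an edge of $\cC_c\Sigma$. The key point is that on a non-exceptional surface $\cC_0\Sigma$ is $1$-dense in $\cC\Sigma$ (Lemma~\ref{L:1dense}): if $x_i$ is not itself essentially non-separating, it has a neighbor $x_i'$ that is. Crucially, since $x_{i-1}$ and $x_{i+1}$ are each disjoint from $x_i$, and $x_i$ is separating with one side a subsurface containing the rest, the neighbor $x_i'$ can be chosen inside the complement of $x_{i-1}\cup x_i\cup x_{i+1}$ — or more carefully, we need $x_i'$ disjoint from both $x_{i-1}$ and $x_{i+1}$. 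I would argue this by noting that the component(s) of $\Sigma - x_i$ witnessing the failure of essential non-separation still contain enough topology after removing the disjoint curves $x_{i-1}, x_{i+1}$ to produce the required essentially non-separating curve (a non-separating curve, a pants curve, or a loop around a paired periphery, depending on the genus/puncture configuration, exactly as in the proof of Lemma~\ref{L:1dense}).

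The second, more delicate issue is making sure each replaced edge actually lies in $\cC_c\Sigma$, not merely in $\cC\Sigma$. By definition an edge $\alpha\beta$ of $\cC_c\Sigma$ (between two vertices of the subgraph) is present provided either $d(c,\alpha)\neq d(c,\beta)$, or $\Sigma - (\alpha\cup\beta)$ has at most one non-pants component. Here is where I would exploit the geodesic structure: since $x_{i-1}, x_i$ are consecutive on a geodesic from $c$, we have $d(c,x_{i-1}) = d(c,x_i) - 1 \neq d(c, x_i)$. Now if I replace $x_i$ by a neighbor $x_i'$ disjoint from $x_{i-1}$, I need $d(c, x_i') \neq d(c, x_{i-1}) = i-1$; since $x_i'$ is adjacent to $x_i\in S_i$, we have $d(c,x_i')\in\{i-1,i,i+1\}$, and the only bad case is $d(c,x_i')=i-1$. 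I would handle this by instead building the modified sequence from $c$ outward and maintaining the invariant that the partially-modified vertex at position $i$ lies in $S_i$ (i.e. still at distance exactly $i$): a neighbor of $x_i\in S_i$ that is also disjoint from the already-chosen $(i-1)$-st vertex, which lies in $S_{i-1}$, is at distance $i-1$ or $i$ from $c$; if it is at distance $i$ we are fine since then we still have a genuine distance jump to the $S_{i-1}$ vertex, and if it drops to distance $i-1$ we can simply absorb it — but this needs care. A cleaner route: choose the essentially non-separating replacement $x_i'$ to additionally satisfy $d(c,x_i')=i$. Since $\cC_0$ is $1$-dense and $S_i$ has the property (via Corollary~\ref{C:BGI} and the geometry) that essentially non-separating curves near a geodesic vertex abound, I expect one can always find such an $x_i'\in S_i\cap\cC_0\Sigma$ disjoint from both neighbors; then each new edge joins a vertex in $S_{i-1}$ to one in $S_i$, hence has endpoints at different distances from $c$, hence is an edge of $\cC_c\Sigma$ by the first clause of the edge definition.

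Assembling: I would process $i = 1, 2, \ldots, r-1$ in order, at each step replacing $x_i$ (if necessary) by an essentially non-separating $x_i' \in S_i$ disjoint from the already-modified previous vertex $x_{i-1}'$ (which is $c$ when $i=1$, and disjoint from $c$ is automatic since $x_0 = c$) and disjoint from $x_{i+1}$. The resulting sequence $c = x_0', x_1', \ldots, x_{r-1}', x_r = a$ has length $r$, consists of vertices in the correct spheres $S_i$, is therefore a geodesic, and every consecutive pair has distinct distances to $c$, so every edge is in $\cC_c\Sigma$; the intermediate vertices are essentially non-separating by construction, hence in $\cC_c\Sigma$.

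\textbf{Main obstacle.} The crux is the existence claim in each step: that one can find an essentially non-separating curve at distance exactly $i$ from $c$ that is simultaneously disjoint from two prescribed disjoint curves $x_{i-1}'$ and $x_{i+1}$ (both disjoint from the curve $x_i$ being replaced). Disjointness from both is the kind of statement handled by the case analysis in the proof of Lemma~\ref{L:1dense} — one works in $\Sigma - (x_{i-1}'\cup x_i\cup x_{i+1})$ and finds enough genus or punctures — but keeping the distance equal to $i$ (rather than allowing it to drop to $i-1$) is the subtle part, and I suspect the actual proof may instead allow the distance to drop and then argue that the sequence can still be repaired into a genuine geodesic in $\cC_c\Sigma$, or may invoke a variant of Corollary~\ref{C:BGI} / the Bounded Geodesic Image Theorem to pin down the distance. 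That bookkeeping around distances is where I expect the real work to lie.
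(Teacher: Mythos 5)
Your construction is essentially the paper's: replace each bad intermediate vertex by an essentially non-separating curve disjoint from its two neighbours on the geodesic. But the issue you flag as the ``main obstacle'' --- pinning down $d(c,x_i')=i$ --- is not an obstacle at all, and no appeal to Corollary~\ref{C:BGI} or the Bounded Geodesic Image Theorem is needed. Once $x_i'$ is disjoint from $x_{i-1}$ and $x_{i+1}$, the modified sequence is an edge path of length $r$ from $c$ to $a$ with $d(c,a)=r$, hence is itself a geodesic; and the $i$-th vertex of any geodesic from $c$ automatically lies in $S_i$. So the distance bookkeeping, and likewise your worry about the edges of $\cC_c\Sigma$, takes care of itself (consecutive geodesic vertices have different distances to $c$, which is one of the two clauses admitting an edge; in any case the lemma only asserts membership of the vertices).

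The one step you leave genuinely vague is the existence of the replacement curve, and here the paper's framing is cleaner than ``remove $x_{i-1}\cup x_{i+1}$ from a component of $\Sigma-x_i$'' (note $x_{i-1}$ and $x_{i+1}$ intersect each other, so that complement is awkward). Instead let $F$ be the subsurface filled by $x_{i-1}\cup x_{i+1}$; it is connected and disjoint from $x_i$, so it lies entirely on one side of the separating curve $x_i$, and the other side is a non-pants component of $\Sigma-x_i$ contained in $\Sigma-F$. That component contains either two punctures (take a pants curve) or positive genus (take a non-separating curve), yielding the desired $\gamma$ disjoint from both neighbours. The paper then phrases the induction as choosing the original geodesic to minimize the number of vertices outside $\cC_c\Sigma$ and deriving a contradiction, which is equivalent to your iterative replacement.
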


Note that Lemma \ref{L:AlmostGeodesic} implies that $S_r^c= \cC_c\Sigma \cap S_r$ is the sphere of radius $r$ with center $c$ in the graph $\cC_c\Sigma$.

\begin{proof}
Let $c=x_0, x_1, \ldots, x_r=a$ be a geodesic from $c$ to $a$ with the minimal possible number of $x_i$ not in $\cC_c\Sigma$. Suppose in order to find a contradiction that some $x_j, 0<j<r$ is not in $\cC_c\Sigma$. 

Let $F$ be the subsurface filled by $x_{j-1}$ and $x_{j+1}$, so $F$ is connected. If some component of $\Sigma-F$ has at least two punctures, let $\gamma$ be a loop in $\Sigma-F$ around two punctures. Otherwise, let $\gamma$ be a non-separating curve in a component of $\Sigma-F$. 

 Replacing $x_j$ with $\gamma$ gives a contradiction to the fact that the geodesic was chosen to have the minimum number possible of vertices not in $\cC_c \Sigma$. 
\end{proof}

To account for the difference between $S_r$ and $S_r^c$, we have the following. 

\begin{lemma}\label{L:GetIntoCc}
Assume $\Sigma$ is not exceptional or low complexity. For all $a\in S_r$ not in $\cC_c\Sigma$ there exists $a'\in S_r\cap \cC_c\Sigma$ adjacent to $a$. 
\end{lemma}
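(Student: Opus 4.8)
\textbf{Proof proposal for Lemma \ref{L:GetIntoCc}.}
The plan is to show that any $a\in S_r$ which fails to lie in $\cC_c\Sigma$ is, because of this failure, forced to have a very restricted complement, and then to exhibit explicitly an essentially non-separating curve at the same distance $r$ that is disjoint from $a$. Since $a\notin\cC_c\Sigma$ and $a\neq c$ (as $c\in\cC_c\Sigma$ trivially, and if $a=c$ then $r=0$ and the statement is vacuous), $a$ is not essentially non-separating: it is separating, not a pants curve, and not eventually non-separating in the relevant sense. Hence $\Sigma-a$ has two components, each of which is not a three-times-punctured sphere (otherwise $a$ would be a pants curve), and neither of which is a non-pants surface with the complementary piece carrying all the genus in a way that would make $a$ eventually non-separating. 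The upshot is that both components of $\Sigma-a$ have positive complexity. Because $\Sigma$ is neither exceptional nor low complexity, $\xi(\Sigma)\geq 3$, which gives us enough room: we can find in one component of $\Sigma-a$ a curve $a'$ that is essentially non-separating in $\Sigma$ and disjoint from $a$.

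The key steps, in order, would be: (i) unpack what ``$a\notin\cC_c\Sigma$'' means using the definition in Section \ref{SS:Cc}, concluding $a$ is separating, not a pants curve, and such that $\Sigma-a$ has two non-pants components; (ii) using $\xi(\Sigma)\geq 3$ and a short case analysis on the genus/puncture distribution of $\Sigma-a$, produce an essentially non-separating curve $a'$ disjoint from $a$ (if one side has genus, take a non-separating curve there; if one side has at least two punctures, take a pants curve around them; the remaining cases are handled by taking a loop around a puncture and a boundary component of the complementary piece, which is eventually non-separating in $\Sigma$ once the boundary is reglued), noting that such $a'$ is automatically adjacent to $a$ in $\cC\Sigma$; (iii) verify $d(c,a')=r$. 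For step (iii), since $a'$ is adjacent to $a$ we have $|d(c,a')-r|\leq 1$, so it suffices to rule out $d(c,a')=r\pm1$. If $d(c,a')=r+1$, then $a$ lies on a geodesic from $c$ to $a'$, and I would apply the Bounded Geodesic Image machinery (Corollary \ref{C:BGI} and its proof technique) in the subsurface $V$ equal to the large component of $\Sigma-a'$, or directly re-choose $a'$ among many candidates: Lemma \ref{L:1dense} (or \ref{L:subgraph}) gives not just one but a whole $1$-dense family of essentially non-separating curves disjoint from $a$, and a counting/BGI argument forces at least one of them to be at distance exactly $r$ — concretely, if every essentially non-separating curve disjoint from $a$ were at distance $r+1$, one could project to $V$ and derive a contradiction with the fact that $\cC V$ has infinite diameter, exactly as in the proof of Corollary \ref{C:BGI}.

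The main obstacle I expect is step (iii): ensuring the replacement curve $a'$ lands at distance exactly $r$ rather than $r+1$ (it cannot be at distance $r-1$, since then $a$ would be at distance $r-1$ as well, being adjacent, contradicting $a\in S_r$ — wait, more carefully, $a'$ at distance $r-1$ is a priori possible and would actually be fine for a weaker statement but not this one). The clean way around this is to not pick a single $a'$ but to work with the subsurface $V$ cut off by $a$ (the component of $\Sigma-a$ of positive complexity, or the one that is not a thrice-punctured sphere) and argue as in Corollary \ref{C:BGI}: choosing $b\in\cC_0 V$ with $d_V(b,c)>M$ forces every geodesic from $b$ to $c$ through $a$, hence $b\in S_{r+1}$, and then choosing a \emph{neighbor} of $b$ inside $\cC_0 V$ that is closer to $c$ — such a neighbor exists and lies in $S_r$, is disjoint from $a$, and is essentially non-separating and adjacent to $a$ in $\cC_c\Sigma$. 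One must check that this neighbor's edge to $a$ is actually an edge of $\cC_c\Sigma$, i.e. that $\Sigma-(a\cup a')$ has at most one non-thrice-punctured-sphere component or that $d(c,a)\neq d(c,a')$; since we will have arranged $d(c,a')=r-1\neq r=d(c,a)$ or can choose $a'$ so its complement with $a$ is controlled, this holds. I would also double check the low-complexity exclusion is genuinely needed: in the $\xi=2$ cases $\Sigma-a$ can consist of two once-punctured tori's worth... actually two pieces each of complexity $1$, which still contain curves, so the obstruction must be subtler and tied to the $\cC_c\Sigma$ edge condition; this is worth flagging but not resolving in the sketch.
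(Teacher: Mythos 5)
You correctly isolate the crux --- forcing $d(c,a')$ to equal $r$ exactly rather than $r+1$ --- but neither of your proposed resolutions closes it. First, your claim that choosing $b\in\cC_0 V$ with $d_V(b,c)>M$ ``forces every geodesic from $b$ to $c$ through $a$, hence $b\in S_{r+1}$'' fails here: since $a$ is separating with two non-pants complementary components, the curves failing to cut $V$ are not just $a$ but also everything supported in the other component of $\Sigma-a$, so a geodesic may bypass $a$ entirely and one only gets $b\in S_r\cup S_{r+1}$. Second, and more seriously, the assertion that $b$ has ``a neighbor inside $\cC_0 V$ that is closer to $c$'' is unsupported: every curve of $\cC V$ near $b$ still has large projection $d_V(\cdot,c)$ by the coarse Lipschitz property of subsurface projections, hence distance at least $r$ from $c$ by the same BGI argument, and nothing you say prevents all of them from lying in $S_{r+1}$. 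Your alternative suggestion --- that if every essentially non-separating curve disjoint from $a$ were at distance $r+1$ one could contradict $\cC V$ having infinite diameter --- is not an argument; no contradiction follows from that hypothesis.

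The missing ingredient is an upper-bound mechanism, and the paper gets it by choosing the component of $\Sigma-a$ cleverly rather than arbitrarily. By Lemma \ref{L:AlmostGeodesic} there is $b\in S_{r-1}\cap\cC_c\Sigma$ adjacent to $a$; let $U$ be the component of $\Sigma-a$ \emph{not containing} $b$, and take $a'\in\cC_0 U$ with $d_U(a',c)>M$ using Lemma \ref{L:1dense}. Then $a'$ is disjoint from $b\in S_{r-1}$, so $d(c,a')\leq r$; and Theorem \ref{T:BGI} forces any geodesic from $a'$ to $c$ through a curve $e$ not cutting $U$, which is disjoint from or equal to $a$ and hence has $d(c,e)\geq r-1$, giving $d(c,a')\geq r$. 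So $a'\in S_r$. (Note also that the lemma only requires $a'$ to be adjacent to $a$ in $\cC\Sigma$ --- $a$ is not even a vertex of $\cC_c\Sigma$ --- so the worry about the $\cC_c\Sigma$ edge condition at the end of your sketch is moot.)
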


\begin{proof}
By Lemma \ref{L:AlmostGeodesic} there exists $b\in S_{r-1}\cap \cC_c\Sigma$ adjacent to $a$. Suppose that $a$ is not in $\cC_c\Sigma$, so $\Sigma-a$ has two components, neither of which are a three times punctured sphere.  

Let $U$ be the component of $\Sigma-a$ not containing $b$. Let $a'\in \cC_c\Sigma$ be a curve in $\cC U$ for which $d_U(a', c)>M$, which can be found using Lemma \ref{L:1dense}. 

By Theorem \ref{T:BGI}, every geodesic from $a'$ to $c$ must contain some curve $e$ that fails to cut $U$. This $e$ must be disjoint from (or equal to) $a$ since it fails to cut $U$ and $a$ is on the boundary of $U$. If this $e$ has distance less than $r-1$ to $c$, that would imply that $a$ had distance at most $r-1$ to $c$, a contradiction. So $e$ has distance at least $r-1$ to $c$, and hence $a'$ has distance at least $r$ to $c$. Since $a'$ is disjoint from $b\in S_{r-1}$ the distance from $a'$ to $c$ is at most $r$, so we get $a' \in S_r$. 
\end{proof}

\subsection{The medium and high complexity cases.} For convenience we continue to use the notation $S_r^c=S_r\cap \cC_c\Sigma$. The second  key proposition is the following.

\begin{proposition}\label{P:keyhigher}
In the curve complex  $\cC \Sigma$ with $\Sigma$ medium or high complexity, for any $c$ and any $r$ we have the following. 
\begin{enumerate}
\setlength\itemsep{0.15em}
\item\label{keyhigher:V} For every  $z\in S_r^c$ and  $x, y\in S_{r+1}^c\cap B_1(z)$ there exists a path $$x=x_0, x_1, \ldots, x_\ell=y$$ in the graph $\cC_c \Sigma_{g,n}$ with $$x_i\in (S_{r+1}^c\cup S_{r+2}^c)\cap B_2(z)$$ for $0\leq i\leq \ell$.
If $\Sigma$ has high complexity, we can moreover obtain 
$$x_i\in S_{r+1}^c\cap B_1(z)$$ for $0\leq i\leq \ell$.
\item\label{keyhigher:adjacent} For every adjacent pair $x,y\in S_{r}^c$ there exists a path $$x=x_0, x_1, x_2=y$$ in $\cC_c \Sigma_{g,n}$ with $x_1\in S_{r+1}^c$.
\end{enumerate}
\end{proposition}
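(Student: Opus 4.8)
The strategy is to prove the two parts directly in the graph $\cC_c\Sigma$, using the Bounded Geodesic Image Theorem (Theorem~\ref{T:BGI}) together with the connectivity results for the essentially non-separating curve graph of cut surfaces (Lemmas~\ref{L:HighGenusConnected}, \ref{L:Genus1Connected}, \ref{L:Genus0Connected}) and their ``stays near a geodesic'' refinements. The running principle, as in the proof of Corollary~\ref{C:BGI}, is: if a curve $\gamma$ is disjoint from (or equal to) a curve $\delta$ that lies on a geodesic between two points $a,b$, and the two halves of the complement of $\delta$ have large relative subsurface projection between $a$ and $b$, then every geodesic from $a$ to $b$ must pass through $\delta$; running this with $\delta$ playing the role of a ``barrier'' forces distances from $c$. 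I would use this repeatedly to certify that candidate curves lie in $S_{r+1}^c$ or $S_{r+2}^c$.

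\textbf{Part~\eqref{keyhigher:adjacent} (adjacent pair in $S_r^c$).} Given adjacent $x,y\in S_r^c$, by Lemma~\ref{L:AlmostGeodesic} pick geodesics from $c$ realizing $d(c,x)=d(c,y)=r$; the union $x\cup y$ fills a connected subsurface $F$, and since $x,y$ are both essentially non-separating and adjacent in $\cC_c\Sigma$, the complement $\Sigma-F$ has at most one non-pants component (this is the edge condition in $\cC_c\Sigma$). I would find a single curve $x_1$ disjoint from both $x$ and $y$ lying ``outside'' in a component of $\Sigma-(x\cup y)$ with enough complexity — either a pants curve around two punctures or a non-separating curve in a positive-genus complementary piece — so that $x_1$ is essentially non-separating and adjacent in $\cC_c\Sigma$ to each of $x,y$. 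That $x_1$ is adjacent to two points at distance $r$ gives $d(c,x_1)\in\{r-1,r,r+1\}$; to rule out $\leq r$ I would use the BGI argument: choose $x_1$ with large $d_U$ from $c$ where $U$ is an appropriate subsurface disjoint from $x$ (possible by Lemma~\ref{L:1dense}), forcing any geodesic from $x_1$ to $c$ through $x$, hence $d(c,x_1)\geq r+1$; combined with adjacency to $y$ this gives $x_1\in S_{r+1}^c$. The subtlety is that medium complexity ($\xi=3$) leaves little room, so I need to check the complementary subsurface really does carry an infinite-diameter curve graph in which the ``essentially non-separating'' condition is non-vacuous — this is exactly what the case analysis in Section~\ref{S:ENS} was set up to handle.

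\textbf{Part~\eqref{keyhigher:V} (two neighbors of $z$).} Given $z\in S_r^c$ and $x,y\in S_{r+1}^c\cap B_1(z)$, both $x$ and $y$ are disjoint from $z$. Look at $V$, the complement of $z$ (or its non-pants component if $z$ is a pants curve): both $x$ and $y$ project into $\cC_c V\subseteq \cC_0 V$, which by the connectivity lemmas for cut surfaces is connected \emph{and} supports a path staying within a bounded constant $C$ of a $\cC V$ geodesic from $\pi_V(x)$ to $\pi_V(y)$. Pull this path back to a sequence $x=x_0,x_1,\dots,x_\ell=y$ of essentially non-separating curves in $V$, each disjoint from $z$, hence each adjacent to $z$; adjacency to $z\in S_r^c$ gives $d(c,x_i)\in\{r-1,r,r+1\}$. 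To push each $x_i$ out to $S_{r+1}^c$ (forbidding $S_r^c$ and $S_{r-1}^c$), I again invoke BGI: since $x$ and $y$ are in $S_{r+1}^c$ but $z\in S_r^c$, the subsurface projection $d_V(x_i,c)$ is controlled by $d_V(x,c)$ and $d_V(y,c)$ up to the bounded-geodesic constant plus the constant $C$ from the near-geodesic path, and one of $d_V(x,c),d_V(y,c)$ must be large (otherwise a geodesic from $z$'s neighbor to $c$ wouldn't be forced through $z$, contradicting $x,y\in S_{r+1}$); hence $d_V(x_i,c)>M$ for all $i$, forcing every geodesic from $x_i$ to $c$ through $z$, so $d(c,x_i)\geq r+1$, i.e.\ $x_i\in S_{r+1}^c\cap B_1(z)$. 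When the path in $\cC_0 V$ can be taken \emph{within} $\cC_0 V$ itself (rather than needing an auxiliary vertex), consecutive $x_i$ are adjacent in $\cC_c\Sigma$, which is what the high-complexity refinement claims; in medium complexity the bounded-geodesic path in the pants complex / $S_{0,4}$ replacements of Lemma~\ref{L:NonSepPath} may introduce intermediate curves that are not themselves adjacent to $z$, and these excursions are what push some $x_i$ only into $S_{r+2}^c$ and only into $B_2(z)$.

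\textbf{Main obstacle.} The real work is the medium-complexity bookkeeping: verifying, surface by surface ($\Sigma_{2,0}$, $\Sigma_{1,3}$, $\Sigma_{0,6}$), that (i) the relevant complementary cut surfaces $V$ fall under the hypotheses of exactly one of Lemmas~\ref{L:HighGenusConnected}, \ref{L:Genus1Connected}, \ref{L:Genus0Connected} so that $\cC_0 V$ is connected with the near-geodesic property, and (ii) the near-geodesic constant $C$ combines with the BGI constant $M$ to still force the distance-$(r+1)$ conclusion after the possible length-one detours, landing everything in $B_2(z)$ rather than some larger ball. Keeping track of which complementary pieces can be pants (and hence invisible to $\cC_0$) versus which carry genuine essentially non-separating curves is the delicate part, and is precisely why the subgraph $\cC_c\Sigma$ and the machinery of Section~\ref{S:ENS} were introduced.
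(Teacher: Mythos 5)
Your part~\eqref{keyhigher:adjacent} argument matches the paper's: locate the unique non-pants component $V$ of $\Sigma-(x\cup y)$ (guaranteed by the edge condition in $\cC_c\Sigma$), choose an essentially non-separating $x_1\in\cC V$ with $d_V(x_1,c)>M$ via Lemmas~\ref{L:C0NotEmpty} and \ref{L:1dense}, and apply Theorem~\ref{T:BGI}: the only curves not cutting $V$ are $x$ and $y$, so every geodesic from $x_1$ to $c$ passes through one of them, giving $x_1\in S_{r+1}^c$. That part is fine.

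Part~\eqref{keyhigher:V} has a genuine gap. You certify that every curve $x_i$ on your path satisfies $d_V(x_i,c)>M$ by arguing that ``one of $d_V(x,c),d_V(y,c)$ must be large, otherwise a geodesic from $z$'s neighbor to $c$ wouldn't be forced through $z$, contradicting $x,y\in S_{r+1}$.'' This implication is false: $d_U(x,c)>M$ is a \emph{sufficient} condition for a neighbor $x$ of $z$ to lie in $S_{r+1}$, not a necessary one. A curve $x$ disjoint from $z\in S_r$ can perfectly well lie in $S_{r+1}$ while $\pi_U(x)$ is close to $\pi_U(c)$ — its geodesics to $c$ simply do not pass through $z$. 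Handling exactly these neighbors is the content of the paper's Lemma~\ref{L:IntoO}: one defines $\cO(z)=\{a\in S_1(z)\cap\cC_c\Sigma: d_U(a,c)>M\}$ and shows that an \emph{arbitrary} $x\in S_{r+1}^c\cap B_1(z)$ can be walked into $\cO(z)$ while staying in $S_{r+1}^c\cap B_1(z)$. The certification at each step is not a projection bound to $U$ relative to $c$ at all; instead one chooses $x_{i+1}$ in a component $V$ of $\Sigma-(x_i\cup z)$ with $d_V(x_{i+1},c)>M$ (using Lemma~\ref{L:abd} to keep everything essentially non-separating), so that BGI forces a geodesic from $x_{i+1}$ to $c$ through a curve disjoint from both $x_i\in S_{r+1}$ and $z\in S_r$, pinning $x_{i+1}$ into $S_{r+1}$. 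Your proposal has no mechanism for these ``bad'' neighbors of $z$.

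A second, related gap: even after reducing to $x,y\in\cO(z)$, your path in $\cC_0 U$ ``within $C$ of a $\cC U$ geodesic'' does not suffice, because a geodesic in $\cC U$ from $\pi_U(x)$ to $\pi_U(y)$ may pass arbitrarily close to $\pi_U(c)$, at which point BGI gives you nothing. The paper avoids this by induction on complexity: it first uses Theorem~\ref{T:SphereConnected} for the lower-complexity surface $U$ to route a path $p_0,\dots,p_\ell$ in $\cC U$ keeping $d_U(p_i,c)>M+C$ (i.e.\ through the complement of a ball about $\pi_U(c)$, which is connected by the inductive hypothesis), and only then applies the near-geodesic connectivity of $\cC_0 U$ locally between consecutive $p_i$'s. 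Your write-up never invokes the induction, and without it the uniform lower bound $d_U(x_i,c)>M$ along the path cannot be obtained. Your intuition about the medium-complexity case (detours into $S_{r+2}^c\cap B_2(z)$ forced by the failure of adjacency in $\cC_0 U$) is qualitatively right and corresponds to the paper's Corollary~\ref{C:MediumExtra}, but it rests on the two unsupported steps above.
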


We can now explain how Proposition \ref{P:keyhigher} implies Theorem \ref{T:SphereConnected} in the medium and high complexity case. 

\begin{proof}[Proof Theorem \ref{T:SphereConnected} in the medium and high complexity cases]
Lemma \ref{L:BasicConnected} implies that the spheres $S_r^c$ in the graph $\cC_c \Sigma$ are connected in the high complexity case, and similarly for the union $S_r^c\cup S_{r+1}^c$ in the medium complexity case. Since Lemma \ref{L:GetIntoCc} gives that every element of $S_r$ is either in $\cC_c\Sigma$ or adjacent to an element of $S_r^c$, this gives the result. 
\end{proof}

\begin{remark}\label{R:OneComplexityAtATime}
Roughly speaking, we will prove Proposition \ref{P:keyhigher} by induction on the complexity of $\Sigma$. The proof for a given surface will make use of Theorem \ref{T:SphereConnected} for lower complexity surfaces. So it is important to note that really what we have just done is show that if Proposition \ref{P:keyhigher} holds for a surface $\Sigma$, then Theorem \ref{T:SphereConnected} holds for that same $\Sigma$. 
\end{remark}

\subsection{Linear connectivity.} We conclude by explaining why the propositions imply linear connectivity. 

\begin{proof}[Proof of Theorem \ref{T:LinearlyConnected}.]
Fix $x\in \cC\Sigma$. If $\Sigma=\Sigma_{g,n}$ has type $(g,n)\in\{(0,5),(1,2)\}$, we set $\Gamma=\cC\Sigma$. Otherwise, we set $\Gamma=\cC_c \Sigma$. Since $\Gamma$ is 1-dense in $\cC\Sigma$, the Gromov boundaries of $\Gamma$ and $\cC\Sigma$ are equal. 

Recall that \cite[Lemma 5.14]{MinskyI} gives that any point of of the Gromov boundary can be represented by a geodesic ray starting at $c$ in $\cC \Sigma$. This can be easily modified to lie in $\Gamma$, for example using the observation in the proof of Lemma \ref{L:AlmostGeodesic} first on the vertices of even index and then on the vertices of odd index. So assumption \eqref{P:LC:represented} in the statement of Proposition \ref{P:LinearlyConnectedGraph} is satisfied. Corollary \ref{C:BGI} gives that assumption \eqref{P:LC:1farther} is satisfied. 

Propositions \ref{P:key05} and \ref{P:keyhigher} show the remaining two assumptions are also satisfied, so Proposition \ref{P:LinearlyConnectedGraph} shows that $\Gamma$ has linearly connected boundary. 
\end{proof}

\section{The five times punctured sphere}\label{S:05}

In this section, let $\Sigma=\Sigma_{0,5}$.   
The goal of this section is to prove Proposition \ref{P:key05}. 

\subsection{Pentagons} We begin by recalling some basics on $\cC \Sigma$ when $\Sigma=\Sigma_{0,5}$. 

\begin{lemma}\label{L:no3or4cycles}
The curve complex $\cC \Sigma$ does not contain cycles of length 3 or 4.
\end{lemma}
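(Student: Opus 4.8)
\textbf{Proof proposal for Lemma \ref{L:no3or4cycles}.}

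The plan is to use the standard correspondence between curves on $\Sigma_{0,5}$ and their complements, together with the combinatorics of how a small number of disjoint curves can be arranged on a five-times-punctured sphere. A curve $\alpha$ on $\Sigma_{0,5}$ separates the five punctures into a set of size $2$ and a set of size $3$; equivalently, $\alpha$ bounds an embedded $\Sigma_{0,3}$ on one side (a ``pair of pants'' enclosing two punctures) and an embedded $\Sigma_{0,4}$ on the other. Thus we may record $\alpha$ by the unordered pair of punctures it encloses — but this labelling is not injective, so I will use it only as a first bookkeeping device and then argue more carefully when needed.

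First I would rule out $3$-cycles. Suppose $\alpha,\beta,\gamma$ are pairwise disjoint and pairwise non-isotopic. Realize them disjointly; cutting $\Sigma_{0,5}$ along $\alpha\cup\beta\cup\gamma$ yields a collection of complementary pieces, each of which is a surface with boundary and punctures whose total Euler characteristic is $\chi(\Sigma_{0,5}) = -3$, and which collectively have $5$ punctures and $6$ boundary circles (each of the three curves contributes two boundary circles). Each piece must be a hyperbolic surface with boundary, so in particular not a disc or annulus (annuli would force two of the curves to be isotopic), which means each piece has $\chi \le -1$. Since the complexity $\xi(\Sigma_{0,5}) = 2$, a maximal multicurve (pants decomposition) has exactly $2$ curves, so three pairwise-disjoint pairwise-non-isotopic curves cannot be simultaneously realized — this already gives that there is no $3$-cycle, and more cheaply than the Euler-characteristic count. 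I would simply cite the fact that $\xi(\Sigma_{0,5})=2$ bounds the size of a multicurve by $2$.

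For $4$-cycles the above observation does not immediately suffice, since a $4$-cycle $\alpha,\beta,\gamma,\delta$ need not have all four curves pairwise disjoint — only cyclically consecutive ones. So suppose we have such a cycle with $\alpha$ disjoint from $\beta$ and $\delta$, and $\gamma$ disjoint from $\beta$ and $\delta$, while $\alpha$ and $\gamma$ may intersect and $\beta,\delta$ may intersect. The key step is to analyze the complement of the pants decomposition $\{\beta,\delta\}$: if $\beta$ and $\delta$ are disjoint they form a pants decomposition of $\Sigma_{0,5}$ (since $\xi = 2$), cutting it into three pairs of pants, and then $\alpha$ and $\gamma$ must each be one of the (finitely many) curves disjoint from $\{\beta,\delta\}$; but a pants decomposition of $\Sigma_{0,5}$ has no curves disjoint from it other than $\beta$ and $\delta$ themselves, so $\alpha,\gamma\in\{\beta,\delta\}$, contradicting distinctness. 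If instead $\beta$ and $\delta$ intersect, I would run the symmetric argument with the disjoint pair $\{\alpha,\gamma\}$: if those are disjoint, the same pants-decomposition rigidity forces $\beta,\delta\in\{\alpha,\gamma\}$, again a contradiction; and it is impossible for both pairs $\{\alpha,\gamma\}$ and $\{\beta,\delta\}$ to intersect, because in a $4$-cycle $\alpha$-$\beta$-$\gamma$-$\delta$ the curves $\alpha$ and $\gamma$ are the two non-consecutive vertices, as are $\beta$ and $\delta$, and we have assumed $\alpha\perp\beta,\ \beta\perp\gamma,\ \gamma\perp\delta,\ \delta\perp\alpha$ — no constraint links the two ``diagonal'' pairs, so we cannot conclude both intersect, but in fact one of them must be disjoint: taking $\{\beta,\delta\}$, if they intersect then we are in the second case and use $\{\alpha,\gamma\}$, and if $\{\alpha,\gamma\}$ also intersects we still have the disjoint pairs $\{\alpha,\beta\}$ etc. which already form multicurves of size $2$ and pin down the picture.

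The main obstacle I anticipate is the bookkeeping in the $4$-cycle case: making rigorous the claim that a pants decomposition of $\Sigma_{0,5}$ admits no disjoint curve other than its own components, and handling the possibility that consecutive curves in the cycle, while disjoint, might be forced to coincide after isotopy. The cleanest route is probably the uniform statement: \emph{any} collection of more than $\xi(\Sigma)=2$ distinct curves on $\Sigma_{0,5}$ contains a pair that intersects, so every edge of a cycle lies among at most two ``disjointness classes,'' and then a short case-check on which vertices of the $3$- or $4$-cycle can be pairwise disjoint produces the contradiction. I would structure the write-up around that single combinatorial lemma about multicurve sizes, deriving both the no-triangle and no-square statements from it, rather than invoking Euler characteristics directly.
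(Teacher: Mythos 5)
The paper states this lemma without proof, treating it as a known fact about $\cC\Sigma_{0,5}$, so there is nothing internal to compare against; judged on its own terms, your argument has a genuine gap in the $4$-cycle case. Your triangle argument is fine: three pairwise disjoint, pairwise non-isotopic curves would form a multicurve of size $3>\xi(\Sigma_{0,5})=2$. Your $4$-cycle argument, however, only disposes of the cases where at least one diagonal pair ($\{\alpha,\gamma\}$ or $\{\beta,\delta\}$) is disjoint. The remaining case --- both diagonals intersect --- is waved away with ``one of them must be disjoint'' and ``pin down the picture,'' but no deduction forces a diagonal to be disjoint. Worse, this is the \emph{only} case that can actually occur in a putative $4$-cycle: since $\{\alpha,\beta\}$ is a pants decomposition and $\gamma$ is disjoint from $\beta$ but distinct from both, $\gamma$ must intersect $\alpha$, and symmetrically $\beta$ must intersect $\delta$. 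So your argument rules out exactly the configurations that were already impossible and leaves the real one untouched. Note also that your proposed ``single combinatorial lemma'' (any $3$ distinct curves contain an intersecting pair) cannot suffice: in a hypothetical $4$-cycle every $3$-element subset of $\{\alpha,\beta,\gamma,\delta\}$ contains a diagonal pair, so the configuration is perfectly consistent with that lemma.

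To close the gap you need a genuinely geometric input beyond the multicurve bound. One route: record each curve by the $2$-element set of punctures it encloses, and check that disjoint non-isotopic curves enclose disjoint $2$-sets; a $4$-cycle then gives a closed $4$-walk in the Kneser graph $K(5,2)$ (the Petersen graph), which has girth $5$, so the walk must backtrack, say $P(\beta)=P(\delta)$. Then $\beta$ and $\delta$ correspond to two distinct isotopy classes of arcs joining the same pair of punctures, and a neighborhood/Euler-characteristic analysis of the union of those arcs shows that there is \emph{at most one} essential curve disjoint from both $\beta$ and $\delta$ --- forcing $\alpha=\gamma$, a contradiction. It is this uniqueness statement (or an equivalent filling-subsurface argument) that your write-up is missing.
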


Such cycles are called triangles and quadrilaterals respectively. 
%
%
%
We also note the following immediate consequence. 

\begin{corollary}\label{C:d2unique}
If $x$ and $y$ are vertices of $\cC \Sigma$ with $d(x,y)\leq 2$, then there is a unique geodesic from $x$ to $y$. 
\end{corollary}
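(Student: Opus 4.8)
The plan is to deduce this immediately from Lemma~\ref{L:no3or4cycles} by a short case analysis on $d(x,y)$.

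If $d(x,y)=0$ then $x=y$ and the only geodesic is the constant path. If $d(x,y)=1$ then $x$ and $y$ are adjacent, and the only path of length $1$ joining them is the single edge $x,y$; in particular the geodesic is unique. The sole case with content is $d(x,y)=2$. Here I would argue by contradiction: suppose $x,m,y$ and $x,m',y$ are both geodesics with $m\neq m'$. I claim the four vertices $x,m,y,m'$ are pairwise distinct. Indeed $x\neq y$ since $d(x,y)=2$; $m\neq m'$ by hypothesis; and each of $m,m'$ is adjacent to both $x$ and $y$, so (adjacent vertices of $\cC\Sigma$ being distinct) none of $m,m'$ equals $x$ or $y$. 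Thus $x,m,y,m'$ is a $4$-cycle in $\cC\Sigma$, contradicting Lemma~\ref{L:no3or4cycles}. Hence the midpoint $m$ is unique, and therefore so is the geodesic.

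There is no genuine obstacle here; the only point that requires a moment's care is verifying that the four vertices of the putative cycle are pairwise distinct, so that Lemma~\ref{L:no3or4cycles} genuinely applies, which is the verification carried out above.
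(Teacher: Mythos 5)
Your proof is correct and is exactly the argument the paper has in mind: the paper states this corollary as an ``immediate consequence'' of Lemma~\ref{L:no3or4cycles} without writing out details, and your reduction of the $d(x,y)=2$ case to the nonexistence of $4$-cycles (including the check that the four vertices are distinct) is the intended verification.
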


Given the above, it is perhaps unsurprising that the geometry of $\cC \Sigma$ is often studied using pentagons. 

\begin{definition}
A pentagon in $\cC \Sigma$ is a 5 tuple of curves $(a_1, a_2, a_3, a_4, a_5)$ such that the 5 punctures of $\Sigma$ can be labeled by the elements of $\bZ/5\bZ$ in such a way that, for each $i$, the curve $a_i$
\begin{enumerate}
\setlength\itemsep{0.15em}
\item goes around  punctures $i$ and $i+1$, 
\item has intersection number 2 with $a_{i-1}$ and with $a_{i+1}$, and 
\item has intersection number 0 with $a_{i-2}$ and with $a_{i+2}$. 
\end{enumerate}
Traversing these curves in the cyclic order $(a_1, a_3, a_5, a_2, a_4)$ gives a 5-cycle in the graph $\cC \Sigma$. See Figure \ref{F:Pentagon}.
\end{definition}

\begin{figure}[h]
\includegraphics[width=0.6\linewidth]{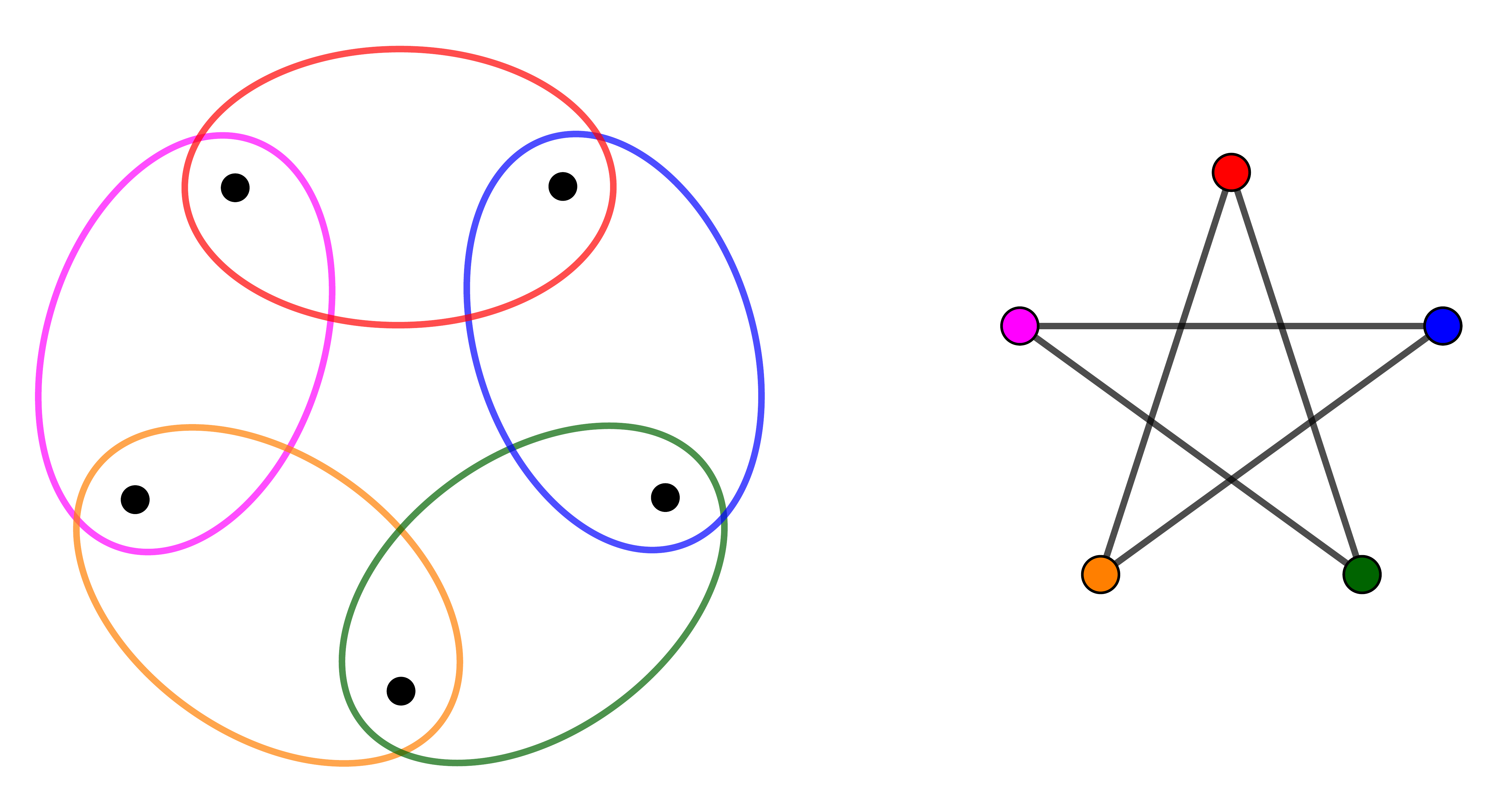}
\caption{The five curves in a pentagon (left), and the 5-cycle they define in $\cC \Sigma$ (right).}
\label{F:Pentagon}
\end{figure}  

It is known that every 5-cycle in $\cC\Sigma$ arises from a pentagon according to our definition \cite[Theorem 3.1]{FiniteRigid}, but  we do not require that result.

\begin{warning}
We will call the tuple $(a_1, a_2, a_3, a_4, a_5)$ a pentagon, and the tuple  $(a_1, a_3, a_5, a_2, a_4)$ a 5-cycle. Because of the different cyclic order, the terms pentagon and 5-cycle are not synonymous here. 
\end{warning}

The following two lemmas produce pentagons that will be helpful for our analysis.

\begin{lemma}\label{L:Pentagon2}
Suppose $a_1, a_3\in S_{r-1}$ are disjoint and distinct. Then there are curves $a_2, a_4, a_5\in S_r\cup S_{r+1}$ such that $(a_1, a_2, a_3, a_4, a_5)$ is a pentagon. 
\end{lemma}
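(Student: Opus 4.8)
\textbf{Plan of proof for Lemma \ref{L:Pentagon2}.}

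The goal is to build a pentagon $(a_1,a_2,a_3,a_4,a_5)$ with prescribed first and third vertices $a_1,a_3\in S_{r-1}$, while controlling the radii of the three new curves $a_2,a_4,a_5$. The plan is to first build a pentagon containing $a_1$ and $a_3$ topologically, and then argue about radii.

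\emph{Step 1: topological construction.} Since $a_1$ and $a_3$ are disjoint and distinct in $\cC\Sigma_{0,5}$, each goes around a pair of punctures, and the two pairs must be either disjoint or share exactly one puncture; if they were equal then $a_1=a_3$. A short case check (or appeal to the rigidity of pentagons \cite[Theorem 3.1]{FiniteRigid}) shows they cannot be disjoint in the sense of enclosing two disjoint pairs --- in $\Sigma_{0,5}$ two disjoint curves always share exactly one enclosed puncture --- so $a_1$ encloses punctures $\{i,i+1\}$ and $a_3$ encloses $\{i+2,i+3\}$ for a suitable labeling by $\bZ/5\bZ$. (Here I am using that disjoint curves in $\cC\Sigma_{0,5}$ have enclosed pairs meeting in one point, which follows from $\cC\Sigma_{0,5}$ having no triangles, Lemma \ref{L:no3or4cycles}.) With this labeling, set $a_2$ to enclose $\{i+1,i+2\}$, $a_4$ to enclose $\{i+3,i+4\}$, $a_5$ to enclose $\{i+4,i\}$; the standard picture (Figure \ref{F:Pentagon}) shows one can choose these so that the intersection-number conditions defining a pentagon hold. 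So a pentagon with $a_1,a_3$ as prescribed exists; what remains is radius control on $a_2,a_4,a_5$.

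\emph{Step 2: radius control.} Each of $a_2,a_4,a_5$ is disjoint from one of $a_1,a_3$ (indeed $a_2$ is disjoint from neither in the $5$-cycle order but $a_5$ is disjoint from $a_1$, $a_4$ is disjoint from $a_3$, etc.; more precisely each new curve is disjoint from at least one of $a_1, a_3$, since conditions (2)--(3) of the pentagon definition guarantee each $a_i$ is disjoint from $a_{i\pm2}$). A curve disjoint from a vertex of $S_{r-1}$ lies in $S_{r-2}\cup S_{r-1}\cup S_r$, so at worst each new curve is in $B_r(c)$ but could a priori be as close as $S_{r-2}$. To push them out, I would use the freedom in choosing the pentagon: the curves enclosing a fixed pair of punctures disjoint from a fixed curve form an infinite set (they differ by the Dehn twist about $a_1$ or $a_3$, say), and Corollary \ref{C:BGI} together with the Bounded Geodesic Image Theorem lets me select representatives whose subsurface projection to the relevant complementary annulus or once-punctured-torus-type subsurface is large, forcing them to lie in $S_r\cup S_{r+1}$. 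Concretely, for each new curve $a_j$ let $V_j$ be the component of $\Sigma - a_i$ (for the appropriate $i\in\{1,3\}$ with $a_i$ disjoint from $a_j$) that is not a thrice-punctured sphere; twisting $a_j$ around $a_i$ moves its $V_j$-projection, and for large twisting power every geodesic from $a_j$ to $c$ passes through $a_i\in S_{r-1}$, whence $a_j\in S_r\cup S_{r+1}$. One must check that twisting preserves the pentagon relations --- it does, since simultaneously twisting all five curves by a power of the twist about one of them is realized by a mapping class and sends pentagons to pentagons --- actually it is cleaner to twist only once around $a_1$ and once around $a_3$, adjusting $a_2, a_4, a_5$ accordingly, and verify the intersection numbers are unchanged.

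\emph{Main obstacle.} The delicate point is the simultaneous radius control: twisting to push one new curve outward must not pull another one inward or break the pentagon structure. The cleanest route is probably to observe that the set of pentagons with $a_1, a_3$ fixed is acted on transitively (or at least cofinitely) by the group generated by twists about $a_1$ and $a_3$ --- these twists fix $a_1, a_3$ and permute the possible completions --- and then a pigeonhole/BGI argument shows that in this orbit one can simultaneously arrange all of $a_2, a_4, a_5$ to project far in their respective complementary subsurfaces, hence to lie in $S_r \cup S_{r+1}$. Verifying that such a simultaneous choice exists, rather than just a choice good for one curve at a time, is where the real work lies; I expect it to follow from the fact that the three relevant subsurface projections can be controlled by two commuting twist parameters with enough independence, but this needs care.
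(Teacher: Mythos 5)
Your Step 1 and the twisting idea in Step 2 match the paper's opening move, but the radius control as you describe it does not go through, and your ``main obstacle'' is not where the difficulty actually lies. Two concrete problems. First, your claim that each new curve is disjoint from at least one of $a_1,a_3$ is false for $a_2$: by the pentagon conditions $a_2$ has intersection number $2$ with both $a_1$ and $a_3$, so you cannot place $a_2$ by ``disjoint from a vertex of $S_{r-1}$''. Second, the mechanism ``twist $a_j$ about $a_i$ so that every geodesic from $a_j$ to $c$ passes through $a_i$'' conflates two different projections: twisting about $a_1$ changes the annular projection $d_{a_1}(\cdot,c)$, not the projection to the complementary four-holed sphere $\Sigma-a_1$ (indeed the twist about $a_1$ fixes every curve disjoint from $a_1$, so it cannot move $a_4$ or $a_5$ relative to that subsurface at all). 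A large annular projection only forces a geodesic to $c$ to contain \emph{some} curve disjoint from $a_1$, which a priori could lie as deep as $S_{r-2}$; for $a_2$ this yields only $d(a_2,c)\ge r-1$, not $r+1$.

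The missing idea is that you only need to control $a_2$. The paper twists about $a_1$ and $a_3$ so that $d_{a_1}(a_2,c)>M$ and $d_{a_3}(a_2,c)>M$, applies the Bounded Geodesic Image Theorem to both annuli, and then runs a short case analysis --- using that $a_4$ is the unique common neighbour of $a_1$ and $a_2$, that geodesics of length $2$ are unique (Corollary \ref{C:d2unique}), and the absence of triangles and squares (Lemma \ref{L:no3or4cycles}) --- to conclude $a_2\in S_{r+1}$. Once that is known, $a_4$ and $a_5$ land in $S_r$ automatically: each is disjoint from one of $a_1,a_3\in S_{r-1}$ (so has distance at most $r$ from $c$) and disjoint from $a_2\in S_{r+1}$ (so has distance at least $r$). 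Thus the simultaneity problem you flag dissolves, but the case analysis for $a_2$, which your proposal does not supply, is the real content of the proof.
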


\begin{proof}
Again, start with any $a_2, a_4, a_5\in \cC \Sigma$ such that $(a_1, a_2, a_3, a_4, a_5)$ is pentagon, and then replace it by its image under a large Dehn twist in $a_1$ and a large Dehn twist in $a_3$ to assume that 
$$d_{a_1}(a_2, c)>M, \quad\text{and}\quad
d_{a_3}(a_2, c)>M.$$ 
It suffices to show $a_2\in S_{r+1}$, so consider a geodesic from $a_2$ to $c$. Theorem \ref{T:BGI} gives that this geodesic must contain a curve that doesn't cut $a_1$, and that it must contain a curve that doesn't cut $a_3$. 

First consider the possibility that the geodesic contains a single curve $b$ that simultaneously  fails to cut both $a_1$ and $a_3$. The only such curves are $a_1$ and $a_3$ themselves, so without loss of generality assume $b=a_1$. We know $d(a_2, a_1)=2$, and the only thing adjacent to both $a_1$ and $a_2$ is $a_4$, so the geodesic must start $a_2, a_4, a_1$. This gives the result. 

Next assume the previous possibility does not occur. Say that, starting at $a_2$, the geodesic first contains a curve $b$ that doesn't cut $a_1$ and then contains a curve $b'$ that doesn't cut $a_3$, and neither of these curves are $a_1$ or $a_3$. Note $d(a_2, b')\geq 2$. Since $b'$ has distance 1 from $a_3\in S_{r-1}$, we're done if $d(a_2, b')\geq 3$. 

So assume $d(a_2, b')=2$. Thus $b$ comes immediately next to $a_2$ and hence is disjoint from both $a_2$ and $a_1$. Thus $b=a_4$. Since $b'$ is disjoint from $b=a_4$ and $a_3$, we get that $b'=a_1$, a contradiction. 
\end{proof}

%
%

\begin{lemma}\label{L:Pentagon3}
Suppose $a_3, a_4\in S_{r}$ have intersection number 2 and are both disjoint from $a_1\in S_{r-1}$. Then there are curves $a_2,  a_5\in S_r\cup S_{r+1}$ such that $(a_1, a_2, a_3, a_4, a_5)$ is a pentagon. 
\end{lemma}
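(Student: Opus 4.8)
\textbf{Proof plan for Lemma \ref{L:Pentagon3}.} The structure should mirror the proof of Lemma \ref{L:Pentagon2}, but now the two curves we control with large twists are $a_3$ and $a_4$ rather than $a_1$ and $a_3$. First I would start with any curves $a_2, a_5$ making $(a_1,a_2,a_3,a_4,a_5)$ a pentagon, and then replace the pentagon by its image under a high power of the Dehn twist about $a_3$ composed with a high power of the Dehn twist about $a_4$. Since $a_1$ is disjoint from both $a_3$ and $a_4$, these twists fix $a_1$, so $a_1$ stays in $S_{r-1}$; and since $a_3, a_4$ are themselves fixed by these twists they stay in $S_r$. The twists arrange that
$$d_{a_3}(a_2,c)>M \quad\text{and}\quad d_{a_4}(a_5,c)>M,$$
and one should also note that $d_{a_3}(a_5,c)>M$ and $d_{a_4}(a_2,c)>M$ may fail, so I would be careful to only use the two projections that the twists genuinely make large — actually, twisting about $a_3$ makes $d_{a_3}$ of everything that crosses $a_3$ large, and both $a_2$ and $a_5$ cross $a_3$ (in the pentagon $a_2$ crosses $a_3$, and $a_5$ has intersection number $0$ with $a_3$, so only $a_2$ is affected); likewise only $a_5$ crosses $a_4$. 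So after twisting we have $d_{a_3}(a_2,c)>M$ and $d_{a_4}(a_5,c)>M$.

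The goal is to show $a_2, a_5 \in S_r \cup S_{r+1}$. Consider $a_2$ first. Since $a_2$ is disjoint from $a_4 \in S_r$ and from $a_5$ (which we will also place in $S_r\cup S_{r+1}$), we have $d(a_2,c)\leq r+1$ automatically once we know $a_5\in S_r\cup S_{r+1}$ — but it's cleaner to argue directly. Take a geodesic from $a_2$ to $c$. By Theorem \ref{T:BGI} and $d_{a_3}(a_2,c)>M$, this geodesic contains a curve not cutting $a_3$. Every curve not cutting $a_3$ is disjoint from or equal to $a_3$; in the pentagon the curves disjoint from $a_3$ are $a_1, a_5, a_3$ itself. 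If the geodesic meets $a_3$, then since $d(a_2,a_3)=2$ (they have intersection number $2$) and $a_3\in S_r$, and Corollary \ref{C:d2unique} (the unique-geodesic statement) forces the geodesic to begin $a_2, a_4, a_3$ or to contain $a_3$ at distance $\geq r-$something — in any case this pins $d(a_2,c)$ to be at most $r+1$ (and combined with $a_2$ disjoint from $a_4 \in S_r$, exactly $r$ or $r+1$). If instead the geodesic meets $a_1, a_5$, or some other curve $e$ disjoint from $a_3$ that is not $a_3$: such $e$ is disjoint from $a_3$ but we need its distance to $c$. Here the argument parallels Lemma \ref{L:Pentagon2}: either $d(a_2,e)\geq 3$ in which case (since $e$ is within distance $2$ of $c$... no—) — more carefully, $e$ is disjoint from $a_3\in S_r$ so $d(e,c)\geq r-1$, and $e$ lies on a geodesic from $a_2$, so $d(a_2,c) = d(a_2,e)+d(e,c) \geq d(a_2,e)+r-1$, giving $d(a_2,c)\geq r$ as soon as $d(a_2,e)\geq 1$. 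For the upper bound $d(a_2,c)\leq r+1$ we use $a_2$ disjoint from $a_4\in S_r$. So $a_2 \in S_r\cup S_{r+1}$. The symmetric argument with $a_4$, $d_{a_4}(a_5,c)>M$, and $a_5$ disjoint from $a_3 \in S_r$ handles $a_5$.

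\textbf{Main obstacle.} The subtle point — and the place where this differs from Lemma \ref{L:Pentagon2} — is that here the two twisted curves $a_3, a_4$ are at \emph{distance} $r$ from $c$, not at distance $r-1$, and they are the very curves we want in the output sphere rather than being given "anchors" one level closer in. So the lower bound $d(a_2,c)\geq r$ cannot come from $a_2$ being adjacent to something forced to have large distance; it has to come from the Bounded Geodesic Image argument combined with the fact that any curve disjoint from $a_3$ (other than $a_3$, which is dealt with separately by uniqueness of short geodesics) is itself in $B_1(S_r)$ hence at distance $\geq r-1$. I would want to double-check the degenerate sub-case where the geodesic from $a_2$ to $c$ passes through $a_3$ itself: then $d(a_2,a_3)=2$ contributes, so $d(a_2,c) = 2 + d(a_3,c) = 2+r$, which would put $a_2\in S_{r+2}$, contradicting the claim — so this sub-case must in fact be \emph{impossible}, and ruling it out requires noting that the unique geodesic from $a_2$ to $a_3$ is $a_2, a_4, a_3$ and $a_4 \in S_r$, so continuing past $a_3$ away from $a_4$ while the geodesic already descended to $S_r$ at $a_4$ is contradictory; i.e. a geodesic from $a_2$ to $c$ passing through $a_3$ would have to pass through $a_4\in S_r$ first and then go $a_4, a_3$ which does not decrease distance to $c$. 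This bookkeeping — ensuring every curve forced onto the geodesic by Theorem \ref{T:BGI} lands in $B_1(S_r)$ and that the "$a_3$ or $a_4$ itself" case is genuinely excluded — is the technical heart, and I expect it is exactly what the author's proof carries out using Corollary \ref{C:d2unique}.
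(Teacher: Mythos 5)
There is a genuine gap at the very first step, in the choice of which curve to twist about. You propose to arrange $d_{a_3}(a_2,c)>M$ and $d_{a_4}(a_5,c)>M$ by applying large powers of the Dehn twists $T_{a_3}$ and $T_{a_4}$. But $i(a_3,a_4)=2$ in a pentagon, so $T_{a_3}$ does not fix $a_4$ and $T_{a_4}$ does not fix $a_3$: if you apply the composition to the whole pentagon, the given curves $a_3$ and $a_4$ (which the lemma requires to remain in the output pentagon) get moved; if instead you apply $T_{a_3}^N$ only to $a_2$, then the new curve $a_2'=T_{a_3}^N(a_2)$ satisfies $i(a_2',a_4)=i(a_2,T_{a_3}^{-N}(a_4))\neq 0$, so $(a_1,a_2',a_3,a_4,a_5)$ is no longer a pentagon. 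Your parenthetical "only $a_2$ is affected" overlooks exactly this: $a_4$ also crosses $a_3$. So the large-projection hypotheses your BGI argument rests on cannot be arranged this way, and the subsequent case analysis (which would otherwise be workable) has nothing to stand on.

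The fix is to twist about $a_1$ instead, which is what the paper does: $a_1$ is disjoint from $a_3$ and $a_4$ by hypothesis, so $T_{a_1}^N$ fixes $a_1,a_3,a_4$, and $a_1$ has intersection number $2$ with both $a_2$ and $a_5$, so a large twist makes $d_{a_1}(a_2,c)>M$ and $d_{a_1}(a_5,c)>M$ while preserving the pentagon. Theorem \ref{T:BGI} then forces a geodesic from $a_2$ to $c$ to contain a curve $b$ not cutting $a_1$; if $d(a_2,b)=1$ then Corollary \ref{C:d2unique} forces $b=a_4\in S_r$ and $a_2\in S_{r+1}$, while if $d(a_2,b)\geq 2$ then $d(b,c)\geq r-2$ (since $b$ is disjoint from or equal to $a_1\in S_{r-1}$) gives $d(a_2,c)\geq r$. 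Note this is the one place your intuition about the "main obstacle" is slightly off: the anchor one level closer in is still $a_1\in S_{r-1}$, exactly as in Lemma \ref{L:Pentagon2}, and the lower bound does come from that anchor rather than from $a_3$ or $a_4$.
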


\begin{proof}
Start with any $a_2, a_5$ that such that $(a_1, a_2, a_3, a_4, a_5)$ is a pentagon, 
%
%
then replace it with its image under a large Dehn twist about $a_1$ to assume that 
$$d_{a_1}(a_2, c)>M, \quad\text{and}\quad
d_{a_1}(a_5, c)>M.$$ 
We wish to show $a_2\in S_r\cup S_{r+1}$. Consider a geodesic from $a_2$ to $c$. Theorem \ref{T:BGI} gives that this geodesic must contain a curve $b$ that doesn't cut $a_1$. First assume that $d(a_2, b)=1$. In this case Corollary \ref{C:d2unique} gives that $b=a_4$, and we get $a_2\in S_{r+1}$. Next assume that $d(a_2,b)\geq 2$. The fact that $b$ doesn't cut $a_1\in S_{r-1}$ guarantees that $d(c,b)\geq r-2$. So we get $d(c,a_2)\geq r$ as desired. 
%
%
%
\end{proof}

\subsection{First notes on spheres} The very first observation on spheres is that $S_1$ has no edges, and so is as disconnected as possible. In analyzing larger spheres, the following definitions will be useful, where we continue to assume that a center vertex $c$ has been fixed. 

\begin{definition}
A vertex $x\in \cC \Sigma$ has unique backtracking if it has a unique neighbour $y$ with $d(y,c)=d(x,c)-1$. 
\end{definition}

\begin{definition}
A vertex $x\in \cC \Sigma$ has no sidestepping if it does not have any neighbour $y$ with $d(y,c)=d(x,c)$. 
\end{definition}

\begin{definition}
A vertex $x\in \cC \Sigma$ is forward facing if it has unique backtracking and no sidestepping. 
\end{definition}

The absence of edges in $S_1$ immediately gives the following. 

\begin{lemma}
Every $x\in S_1$ is forward facing. 
\end{lemma}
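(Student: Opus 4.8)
The claim is that every vertex $x\in S_1$ is forward facing, i.e.\ has unique backtracking and no sidestepping. Both parts should follow immediately from the preceding observation that $S_1$ has no edges, combined with the structure of the curve graph near the center $c$. The plan is as follows.

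\textbf{No sidestepping.} Suppose $x\in S_1$ had a neighbour $y$ with $d(y,c)=d(x,c)=1$. Then $x$ and $y$ are adjacent vertices, both lying in $S_1$. But this is precisely an edge of the induced subgraph on $S_1$, contradicting the statement that $S_1$ has no edges (equivalently, that $c$ together with any two of its disjoint neighbours cannot all be pairwise disjoint, since $\cC\Sigma$ has no triangles by Lemma~\ref{L:no3or4cycles} — though the more elementary ``$S_1$ has no edges'' remark already suffices). Hence no such $y$ exists, so $x$ has no sidestepping.

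\textbf{Unique backtracking.} A vertex $y$ with $d(y,c)=d(x,c)-1=0$ must be $c$ itself, since $S_0=\{c\}$. Since $x\in S_1$ it is adjacent to $c$, so $x$ does have such a neighbour, and it is unique because there is only one vertex at distance $0$ from $c$. Therefore $x$ has unique backtracking. Combining the two parts, $x$ is forward facing. The argument is entirely formal and there is no real obstacle; the only thing to be careful about is simply unwinding the definitions of ``unique backtracking,'' ``no sidestepping,'' and ``forward facing'' against $S_0=\{c\}$ and the emptiness of the edge set of $S_1$.
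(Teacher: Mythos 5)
Your proof is correct and is exactly the argument the paper intends: the paper gives no explicit proof, stating only that the lemma follows immediately from the absence of edges in $S_1$, and your unwinding of the definitions (no sidestepping from the absence of edges, unique backtracking from $S_0=\{c\}$) is precisely that reasoning.
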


As a warm up, we can also easily observe the following. 

\begin{lemma}\label{L:HasForwardFacing}
Every sphere $S_r$ contains forward facing vertices. 
\end{lemma}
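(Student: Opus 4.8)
\textbf{Proof proposal for Lemma \ref{L:HasForwardFacing}.}

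The plan is to exhibit, for each $r$, a vertex of $S_r$ that is forward facing, i.e.\ has both unique backtracking and no sidestepping. The natural candidate is a curve obtained by twisting hard in a fixed curve on one side of a geodesic: start with a geodesic $c = y_0, y_1, \ldots, y_r$ from $c$, and consider modifying the endpoint $y_r$ by a large power of a Dehn twist supported on $y_{r-1}$. Concretely, I would set $a$ to be the image of $y_r$ under $T_{y_{r-1}}^N$ for $N$ large; since $a$ is disjoint from $y_{r-1} \in S_{r-1}$ we have $d(c,a) \le r$, and the twisting arranges $d_{y_{r-1}}(a,c) > M$. In $\cC\Sigma_{0,5}$ the surface $\Sigma - y_{r-1}$ has exactly one non-pants component $V$ (a four-times-punctured sphere), and $a \in \cC V$; here one should be slightly careful, but the essential point is that the only curve failing to cut $V$ is $y_{r-1}$ itself.

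The key steps, in order: (1) Apply the Bounded Geodesic Image Theorem (Theorem \ref{T:BGI}) with subsurface $V$: since $d_V(a,c) = d_{y_{r-1}}(a,c) > M$, every geodesic from $a$ to $c$ must contain a curve that does not cut $V$, and the only such curve is $y_{r-1}$. Hence \emph{every} geodesic from $a$ to $c$ passes through $y_{r-1}$, which gives $d(c,a) = r$ exactly, so $a \in S_r$. (2) Unique backtracking: if $y$ is any neighbour of $a$ with $d(y,c) = r-1$, then $a, y$ followed by a geodesic from $y$ to $c$ is a geodesic from $a$ to $c$, hence passes through $y_{r-1}$ by step (1); since $d(y_{r-1}, a) \ge 2$ would be impossible on such a geodesic of the right length, we must have $y = y_{r-1}$ (using that $d(a, y_{r-1}) = 2$ and Corollary \ref{C:d2unique}, the unique geodesic between them forces the back-step to be $y_{r-1}$). (3) No sidestepping: if $y$ were a neighbour of $a$ with $d(y,c) = r$, extend $a, y$ by a geodesic from $y$ to $c$; this is a path of length $r+1$ from $a$ to $c$, but it need not be geodesic, so instead argue directly: any geodesic from $a$ to $c$ passes through $y_{r-1}$, so the ``last two steps'' of such a geodesic are forced to be $a, y_{r-1}$ by Corollary \ref{C:d2unique} again, and one checks $y$ disjoint from $a$ with $d(y,c)=r$ would produce an alternative geodesic-length path violating this rigidity; more cleanly, $y$ adjacent to $a$ with $d(c,y)=r=d(c,a)$ together with $a \in S_r$ lying on a forced geodesic through $y_{r-1}$ forces $y$ to also be distance $2$ from $y_{r-1}$ and adjacent to $a$, and one rules this out using Lemma \ref{L:no3or4cycles} (no triangles or quadrilaterals).

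I expect step (3), ruling out sidestepping, to be the main obstacle, since it is not purely a consequence of the BGI-forced geodesic structure and seems to genuinely require the absence of short cycles in $\cC\Sigma_{0,5}$ — one has to show that the only way to leave $a$ heading toward $c$ is directly to $y_{r-1}$, which means showing $a$ has no neighbour at the same distance. The cleanest route is probably: suppose $y \sim a$ with $d(c,y) = r$; pick a geodesic $\sigma$ from $a$ to $c$, which passes through $y_{r-1}$ by step (1); the initial segment of $\sigma$ has length $2$ from $a$ to $y_{r-1}$, hence by Corollary \ref{C:d2unique} it is the unique such geodesic $a, w, y_{r-1}$ with $w = T_{y_{r-1}}^N(\text{something})$; now $y$ being adjacent to $a$ and at distance $r$ from $c$, together with $d(y, y_{r-1}) \le d(y,a) + d(a, y_{r-1}) = 3$, forces (after another twisting/BGI argument applied to $y$, or a direct cycle count) a short cycle among $a, y, w, y_{r-1}$, contradicting Lemma \ref{L:no3or4cycles}. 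The alternative, and perhaps simpler, approach is to choose $a$ so that additionally $d_V(a,c)$ is large enough that \emph{any} path of length $\le 2$ from $a$ into $S_{\le r}$ is forced through $y_{r-1}$ by a more careful application of BGI together with the coarse Lipschitz property of subsurface projection, sidestepping the need for an explicit cycle argument.
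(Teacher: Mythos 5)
Your overall strategy is the same as the paper's (force every geodesic from the candidate vertex to $c$ through a fixed $y\in S_{r-1}$ via the Bounded Geodesic Image Theorem, deduce unique backtracking, and kill sidestepping with coarse Lipschitzness of projections plus the absence of triangles), but the construction of the candidate vertex is broken. You set $a=T_{y_{r-1}}^N(y_r)$; since $y_r$ is disjoint from $y_{r-1}$, the Dehn twist about $y_{r-1}$ fixes $y_r$ up to isotopy, so $a=y_r$ and nothing has been gained. Moreover the quantity you claim to have made large, $d_{y_{r-1}}(a,c)$, is the annular projection distance, which is not even defined for $a$ disjoint from $y_{r-1}$, and in any case the annulus is the wrong subsurface: BGI applied to the annulus would only produce a curve \emph{disjoint from} $y_{r-1}$ on the geodesic, not $y_{r-1}$ itself. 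What is needed, and what the paper does, is to work with $V=\Sigma-y_{r-1}$ (the four-holed sphere component) and choose $a\in\cC V$ with $d_V(a,c)>M$ directly; such $a$ exists because $\cC V$ has infinite diameter (move any curve of $V$ by a suitable mapping class \emph{of $V$} — twisting about the boundary of $V$ acts trivially on $\cC V$). With that choice, the only curve not cutting $V$ is $y_{r-1}$, so BGI forces every geodesic from $a$ to $c$ through $y_{r-1}$, giving $a\in S_r$ and unique backtracking.

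Your step (3) is also left incomplete, and your suggested ``alternative'' does not actually avoid the cycle argument — both ingredients are needed. The correct completion: if $z$ is adjacent to $a$ with $d(z,c)=r$, then $z\neq y_{r-1}$, so $z$ cuts $V$; coarse Lipschitzness of the projection to $V$ gives $d_V(z,c)>M$ (after choosing $d_V(a,c)$ large enough to absorb the additive loss), so every geodesic from $z$ to $c$ also passes through $y_{r-1}$, which together with $z\in S_r$ and $y_{r-1}\in S_{r-1}$ forces $z$ adjacent to $y_{r-1}$. Now $a,z,y_{r-1}$ is a triangle, contradicting Lemma \ref{L:no3or4cycles}. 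Your step (2) is essentially right, though Corollary \ref{C:d2unique} is not needed there: any backtracking neighbour $y$ starts a geodesic from $a$ to $c$, which must contain $y_{r-1}$ at distance $1$ from $a$, so $y=y_{r-1}$.
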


\begin{proof}
We assume $r>1$. For any $y\in S_{r-1}$, pick $x$ adjacent to $y$ with $d_{U}(x,c)$  much larger than $M$, where $U$ is the component of $\Sigma-y$ that isn't a three times punctured sphere. As noted in the proof of Corollary \ref{C:BGI}, we have that every geodesic from $x$ to $c$ goes through $y$, and in particular that $x\in S_r$. So $x$ has unique backtracking. 

To see that $x$ has no sidestepping, suppose in order to obtain a contradiction that $z\in S_r$ is adjacent to $y$. Since $z\neq y$, we see that $z$ cuts $U$. By the coarse Lipschitz property of subsurface projections, we see that $d_{U}(z,c)$ is approximately equal to $d_{U}(x,c)$, and in particular we get that $d_{U}(z,c)>M$. So any geodesic from $z$ to $c$ passes through $y$. Since $z\in S_r$ and $y\in S_{r-1}$, we see that actually $z$ and $y$ are adjacent. Thus $x,y,z$ form a triangle, giving a contradiction. 
\end{proof}

\begin{corollary}\label{C:disconnected}
For every $r$, the sphere $S_r$ is disconnected. 
\end{corollary}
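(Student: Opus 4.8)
\textbf{Proof proposal for Corollary \ref{C:disconnected}.}

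The plan is to combine Lemma \ref{L:HasForwardFacing} with a simple connectivity obstruction. By Lemma \ref{L:HasForwardFacing}, every sphere $S_r$ contains a forward facing vertex $x$; that is, a vertex having a unique neighbour $y\in S_{r-1}$ and no neighbour in $S_r$ at all. (For $r=0$ the sphere $S_0=\{c\}$ is a single point, which I will treat separately or simply regard as a degenerate case — more honestly, the statement is interesting for $r\geq 1$, and for $r=1$ we already know $S_1$ has no edges, so it is disconnected as soon as it has at least two vertices, which it does.) So fix $r\geq 2$ and let $x\in S_r$ be forward facing.

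The key point is that $x$ is an isolated vertex of the induced subgraph on $S_r$: since $x$ has no sidestepping, it has no neighbour in $S_r$, hence no edge of $\cC\Sigma$ with both endpoints in $S_r$ is incident to $x$. Therefore $\{x\}$ is a connected component of the induced subgraph $S_r$. To conclude that $S_r$ is disconnected, it then suffices to observe that $S_r$ has at least one other vertex, i.e.\ $|S_r|\geq 2$. This follows because $\cC\Sigma$ has infinite diameter (indeed the curve complex is unbounded, being Gromov hyperbolic and non-elementary), so $S_r\neq\emptyset$ for all $r$; and moreover $S_r$ cannot be a single vertex for $r\geq 1$, since for instance Corollary \ref{C:BGI} (or the argument in the proof of Lemma \ref{L:HasForwardFacing}) produces, for each $y\in S_{r-1}$, infinitely many vertices of $S_r$ adjacent to $y$ by varying the subsurface projection coordinate. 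Hence $S_r$ contains the isolated vertex $x$ together with at least one further vertex, so it is disconnected.

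I do not expect any real obstacle here; the corollary is an immediate packaging of Lemma \ref{L:HasForwardFacing}. The only mild care needed is the bookkeeping that a forward facing vertex is genuinely isolated within $S_r$ (which is exactly the definition of ``no sidestepping'') together with the trivial remark that $S_r$ is not a single point, which is clear from unboundedness of $\cC\Sigma$ and the abundance of curves produced by the Bounded Geodesic Image construction.
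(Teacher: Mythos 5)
Your proof is correct and takes essentially the same route as the paper: Lemma \ref{L:HasForwardFacing} supplies a forward facing vertex of $S_r$, which, having no sidestepping, is isolated in the induced subgraph, so $S_r$ is disconnected. Your only addition is the explicit check that $|S_r|\geq 2$, which the paper leaves implicit.
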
 

\begin{proof}
Lemma \ref{L:HasForwardFacing} gives that $S_r$ has vertices that are not adjacent to any other vertex of $S_r$. So the graph $S_r$ contains vertices with no edges.  
\end{proof}

We also make the following observations. 

\begin{lemma}\label{L:S12connected}
For any $c$, $S_1(c)\cup S_2(c)$ is connected. 
\end{lemma}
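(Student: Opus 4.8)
The statement to prove is Lemma~\ref{L:S12connected}: for $\Sigma = \Sigma_{0,5}$ and any center $c$, the union $S_1(c) \cup S_2(c)$ is connected.

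\medskip\noindent\textbf{Approach.} The plan is to connect an arbitrary vertex of $S_1 \cup S_2$ to a fixed reference vertex, using the fact that $S_1$ is an independent set (no edges) together with the pentagon machinery and the Bounded Geodesic Image Theorem recorded above. The first thing I would observe is that every vertex $x \in S_2$ is adjacent to some vertex $y \in S_1$ (take any geodesic from $x$ to $c$), so it suffices to show that any two vertices $y, y' \in S_1$ can be joined by a path in $S_1 \cup S_2$; the $S_2$ vertices are then pendant-ish and pose no extra difficulty once we also check each $S_2$ vertex connects down appropriately, which is automatic. So the heart of the matter is connecting $y$ to $y'$ through $S_1 \cup S_2$.

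\medskip\noindent\textbf{Key steps.} First I would reduce to connecting two \emph{disjoint} curves $y, y' \in S_1$: since $\cC\Sigma_{0,5}$ is connected and $c$ is a fixed basepoint, I can walk around the link of $c$ — but more carefully, I would use that $S_1(c)$ consists exactly of the curves disjoint from $c$, and in $\Sigma_{0,5}$ the curve graph $\link(c) = \cC(\Sigma - c)$ where $\Sigma - c$ is a four-times-punctured sphere (or one-holed torus), whose curve graph is the connected Farey graph. Hence any two elements of $S_1$ can be joined by a chain $y = y_0, y_1, \ldots, y_k = y'$ where consecutive $y_j, y_{j+1}$ are disjoint elements of $S_1$ (all disjoint from $c$). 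So it suffices to connect each such disjoint pair $y_j, y_{j+1}$ within $S_1 \cup S_2$.

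\medskip Now take disjoint distinct $y_j, y_{j+1} \in S_1 = S_1(c)$, i.e. with $r - 1 = 0$ in the notation of Lemma~\ref{L:Pentagon2}. Wait — Lemma~\ref{L:Pentagon2} is stated for $a_1, a_3 \in S_{r-1}$ and produces $a_2, a_4, a_5 \in S_r \cup S_{r+1}$; applied with $r = 1$, taking $a_1 = y_j$, $a_3 = y_{j+1}$, it yields curves $a_2, a_4, a_5 \in S_1 \cup S_2$ forming a pentagon $(a_1, a_2, a_3, a_4, a_5)$. By the definition of a pentagon, traversing in the cyclic order $(a_1, a_3, a_5, a_2, a_4)$ gives a $5$-cycle in $\cC\Sigma$: so $a_1 = y_j$ is adjacent to $a_3 = y_{j+1}$ along the path $a_1, a_4, a_2, a_5, a_3$ (reading the $5$-cycle the other way: $a_1 \sim a_4 \sim a_2 \sim a_5 \sim a_3$). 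All of $a_2, a_4, a_5$ lie in $S_1 \cup S_2$, so this is a path in the induced subgraph on $S_1 \cup S_2$ from $y_j$ to $y_{j+1}$. Concatenating over $j$ connects $y$ to $y'$; prepending/appending the edges from arbitrary $S_2$-vertices down to their $S_1$-neighbors finishes the proof.

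\medskip\noindent\textbf{Main obstacle.} I expect the only real subtlety is justifying the reduction to \emph{disjoint} pairs in $S_1$ — i.e., identifying $S_1(c)$ with the (connected) curve graph of $\Sigma - c$ and noting that adjacency there means disjointness in $\Sigma$. This is standard but should be stated cleanly. Everything else is a direct application of Lemma~\ref{L:Pentagon2} and the combinatorial structure of a pentagon's associated $5$-cycle; no intersection-number estimates or BGI invocations beyond what Lemma~\ref{L:Pentagon2} already packages are needed. One should double-check the orientation of the $5$-cycle so that the exhibited path genuinely connects $a_1$ to $a_3$ through $\{a_2, a_4, a_5\}$, but since a $5$-cycle is connected and contains all five vertices, any two of its vertices are joined by a path using only the other three, so this is automatic.
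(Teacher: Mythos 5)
Your overall outline matches the paper's: reduce to connecting pairs of vertices in $S_1$, join them through a path in the Farey graph $\cC(\Sigma-c)$, and use pentagons to route each step through $S_2$. But there is a genuine gap in the middle. You claim that consecutive vertices $y_j, y_{j+1}$ of a path in the Farey graph $\cC(\Sigma-c)$ are \emph{disjoint} elements of $S_1$. Adjacency in the Farey graph of a four-holed sphere means intersection number $2$, not disjointness; and in fact no two distinct vertices of $S_1(c)$ can be disjoint at all, since together with $c$ they would form a three-component multicurve on $\Sigma_{0,5}$, whose complexity is only $2$. (This is exactly why the paper observes that $S_1$ has no edges.) So the reduction to "disjoint pairs in $S_1$" reduces to the empty set of cases, and your subsequent invocation of Lemma~\ref{L:Pentagon2} is also miscalibrated: with $a_1,a_3\in S_1$ you would need $r=2$, and the lemma would then only place $a_2,a_4,a_5$ in $S_2\cup S_3$, not $S_1\cup S_2$ — but the hypothesis (disjointness in $S_{r-1}=S_1$) cannot be met anyway.

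The fix, which is what the paper does, is to keep $c$ itself as a vertex of the pentagon. Given consecutive $x_i, x_{i+1}$ on the Farey path (intersecting minimally, both disjoint from $c$), one completes to a $5$-cycle $(x_i, y_i, z_i, x_{i+1}, c)$ in $\cC\Sigma$, so that $c$ is adjacent to both $x_i$ and $x_{i+1}$ and the remaining two vertices $y_i, z_i$ interpolate between $x_i$ and $x_{i+1}$. Since $y_i$ is adjacent to $x_i\in S_1$ it lies in $S_1\cup S_2$, and it cannot lie in $S_1$ because $S_1$ has no edges; likewise for $z_i$. This forces $y_i, z_i\in S_2$ with no appeal to Lemma~\ref{L:Pentagon2} or to the BGI machinery, and concatenating the segments $x_i, y_i, z_i, x_{i+1}$ gives the desired path in $S_1\cup S_2$. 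Your framing of the problem is right, but as written the key step does not go through.
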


%
%

\begin{proof}
Consider two points in $S_1$. Connect them with a path $x_0, x_1, \ldots, x_k$ in the Farey graph $\cC (\Sigma-c)$ not passing through $c$. 

For each $0\leq i< n$, consider a 5 cycle $(x_{i}, y_i, z_i, x_{i+1}, c)$. The fact that $S_1$ does not have edges implies that $y_i, z_i\in S_2$. The path 
$$x_0, y_0, z_0, x_1, y_1, z_1, \ldots, x_{k-1}, y_{k-1}, z_{k-1} x_k$$
is contained in $S_1\cup S_2$ and proves the result. 
\end{proof}

\begin{lemma}\label{L:ForwardFacing}
Suppose $a\in S_r$ is forward facing. Then if $b, b'\in S_{r+1}$ are neighbours of $a$, then there is a path from $b$ to $b'$ in $(S_{r+1} \cup S_{r+2})\cap B_2(a)$.
\end{lemma}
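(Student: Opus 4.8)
The plan is to reduce the statement to the pentagon lemmas already established, analyzing the configuration of the two neighbors $b, b'$ of the forward facing vertex $a \in S_r$. Since $a$ is forward facing, it has a unique neighbor $p$ with $d(p,c) = r-1$, and no neighbor in $S_r$ itself; hence every neighbor of $a$ other than $p$ lies in $S_{r+1}$. In particular $b, b' \in S_{r+1} \cap B_1(a)$, and $p \in S_{r-1}$ is disjoint from $a$. Since $b$ and $b'$ are both disjoint from $a$, the relevant data is the intersection pattern among $b, b', p$, all of which are disjoint from $a$: in $\cC\Sigma_{0,5}$ two distinct curves disjoint from a fixed curve either are disjoint or have intersection number $2$ (they cannot be disjoint without being in the Farey graph of the complement four-times-punctured sphere minus a pants piece — more precisely, curves disjoint from $a$ and not equal to $a$ lie in the complementary $\Sigma_{0,4}$, whose curve graph is Farey, so any two of them are either equal, disjoint, or intersect exactly twice).

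First I would dispose of the easy case: if $b$ and $b'$ are disjoint, then $b, b'$ is already an edge in $S_{r+1}$, giving a trivial path of length one inside $S_{r+1} \cap B_2(a)$. So assume $i(b,b') = 2$. Now I split on the relationship of $b$ (and $b'$) with $p$. The key sub-case is when $b, b'$ and $p$ are mutually disjoint-or-low-intersection in the right way to apply Lemma \ref{L:Pentagon3}: there, $a_3 = b, a_4 = b'$ play the role of the two curves in $S_r$ — wait, here $b,b'$ are in $S_{r+1}$, not $S_r$, so I would instead set up a pentagon with $a_1 = a \in S_r$ (playing the role that $S_{r-1}$ played in Lemma \ref{L:Pentagon3}, shifting the index $r$ up by one), $a_3 = b$, $a_4 = b'$, both in $S_{r+1}$ with intersection number $2$ and both disjoint from $a \in S_r$. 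Applying Lemma \ref{L:Pentagon3} (with $r$ replaced by $r+1$) produces $a_2, a_5 \in S_{r+1} \cup S_{r+2}$ so that $(a, a_2, b, a_5, b')$ is a pentagon; traversing it in the $5$-cycle order gives the path $b = a_3, a_5, b' = a_4$ wait — I need to trace the $5$-cycle $(a_1, a_3, a_5, a_2, a_4) = (a, b, a_5, a_2, b')$, so the path from $b$ to $b'$ along the $5$-cycle not through $a$ is $b, a_5, a_2, b'$, all in $S_{r+1} \cup S_{r+2}$, and all within $B_2(a)$ since $a_2, a_5$ are each disjoint from one of $a, b, b'$ which are within distance $1$ of $a$. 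This gives exactly a path in $(S_{r+1} \cup S_{r+2}) \cap B_2(a)$.

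The main obstacle will be verifying that the curves $a_2, a_5$ produced by Lemma \ref{L:Pentagon3} actually land in $B_2(a)$: the lemma as stated only controls their distance to $c$, not their distance to $a$. I expect this to follow because in a pentagon every curve is disjoint from two others, so $a_2$ is disjoint from $a_1 = a$ and from $a_3 = b$, placing $a_2 \in B_1(a)$ in fact, and likewise $a_5$ is disjoint from $a_1 = a$, so $a_5 \in B_1(a)$ as well — giving the even stronger conclusion that the path lies in $B_1(a)$. The second genuine issue is ensuring Lemma \ref{L:Pentagon3}'s hypothesis is met: it requires $a_3, a_4 \in S_{r+1}$ (in the shifted indexing) with intersection number $2$ and both disjoint from $a_1 = a \in S_r$ — which is precisely our standing assumption in the remaining case, so the hypotheses transfer directly. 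One should double-check the degenerate possibility that no pentagon with prescribed $a_1 = a$, $a_3 = b$, $a_4 = b'$ exists combinatorially; but since $b, b'$ are disjoint from $a$ and $i(b,b')=2$, they fill the $\Sigma_{0,4}$ complementary to $a$, and together with $a$ one can always extend to a pentagon (this is the content of the first line of the proof of Lemma \ref{L:Pentagon3}, "start with any $a_2, a_5$ such that $(a_1,a_2,a_3,a_4,a_5)$ is a pentagon"). Assembling these pieces yields the desired path.
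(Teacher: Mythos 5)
The central step of your argument --- applying Lemma \ref{L:Pentagon3} with $a_1=a$, $a_3=b$, $a_4=b'$ (indices shifted by one) to produce the path $b,a_5,a_2,b'$ in $S_{r+1}\cup S_{r+2}$ --- is exactly the paper's mechanism, but you only carry it out in the special case $i(b,b')=2$, and the reduction to that case is where the proof breaks. Your claim that two distinct curves disjoint from $a$ ``are either equal, disjoint, or intersect exactly twice'' is false: such curves live in the four-holed-sphere component of $\Sigma-a$, whose curve graph is the Farey graph, and the Farey graph is not complete --- two of its vertices can have arbitrarily large intersection number (e.g.\ a curve and its image under a high power of a Dehn twist). (Relatedly, your ``easy case'' where $b$ and $b'$ are disjoint is vacuous, since distinct essential curves in $\Sigma_{0,4}$ always intersect.) The missing idea is the paper's first step: connect $b$ to $b'$ by a path $x_0,\dots,x_k$ in the Farey graph $\cC(\Sigma-a)$ that avoids the unique backtracking vertex $p\in S_{r-1}$ (possible because deleting one vertex does not disconnect the Farey graph); the no-sidestepping hypothesis then forces every $x_i$ into $S_{r+1}$, consecutive $x_i$ have intersection number $2$, and Lemma \ref{L:Pentagon3} is applied to each consecutive pair. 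This is also the only place the forward-facing hypothesis does real work; in your truncated argument it plays essentially no role, which is itself a warning sign.

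A second, smaller error: in a pentagon $(a_1,\dots,a_5)$ the curve $a_2$ has intersection number $2$ with $a_1$ and $a_3$ (it is disjoint from $a_4$ and $a_5$), and likewise $a_5$ has intersection number $2$ with $a_1$ and $a_4$. So $a_2$ and $a_5$ are \emph{not} disjoint from $a=a_1$, and they do not lie in $B_1(a)$; your justification of the distance bound is wrong. They are, however, disjoint from $a_4=b'$ and $a_3=b$ respectively, which places them in $B_2(a)$ --- exactly the bound the lemma asserts, and the reason the statement says $B_2(a)$ rather than $B_1(a)$.
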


\begin{proof}
The curve complex $\cC (\Sigma-a)$ is the Farey graph, and removing any given vertex does not disconnect the Farey graph. 
So we can find a path $b=x_0, x_1, \ldots, x_k=b$ in $\cC (\Sigma-a)$ not passing through the unique neighbour of $a$ in $S_{r-1}$. All the vertices of this path are in $S_{r+1}$ since $a$ has no sidestepping.


Using Lemma \ref{L:Pentagon3}, 
for each $i$ we can find a 5-cycle $(x_{i}, y_i, z_i, x_{i+1}, a)$ with $y_i, z_i\in S_{r+1}\cup S_{r+2}$.

 The path 
$$x_0, y_0, z_0, x_1, y_1, z_1, \ldots, x_{k-1}, y_{k-1}, z_{k-1} x_k$$
lies in $(S_{r+1} \cup S_{r+2})\cap B_2(a)$, proving the result. 
\end{proof}

\begin{lemma}\label{L:S23connected}
For any $c$, $S_2(c)\cup S_3(c)$ is connected. 
\end{lemma}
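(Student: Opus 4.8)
The statement to prove is Lemma~\ref{L:S23connected}: for any $c$, the set $S_2(c)\cup S_3(c)$ is connected in $\cC\Sigma_{0,5}$. The plan is to run the same machine that proved Lemma~\ref{L:S12connected} and Lemma~\ref{L:ForwardFacing}, but now starting one sphere further out. First I would fix two vertices $a,b\in S_2\cup S_3$; since $S_2$ is $1$-dense-ish relative to $S_3$ (every point of $S_3$ has a backtracking neighbour in $S_2$), it suffices to connect any two vertices of $S_2$ inside $S_2\cup S_3$, together with the observation that each point of $S_3$ is joined by an edge to $S_2$. So reduce to $x,y\in S_2$. Each of $x,y$ has a backtracking neighbour in $S_1$; and $S_1\cup S_2$ is already known to be connected by Lemma~\ref{L:S12connected}, so the ``shadows'' of $x$ and $y$ in $S_1$ can be joined by a path in $S_1\cup S_2$. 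This reduces the problem to the following local move: given $w\in S_1$ and two neighbours $x,x'\in S_2$ of $w$ (or more generally two vertices of $S_2$ lying in a common sphere-$1$ star), connect them within $S_2\cup S_3$.

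\textbf{Key steps.} (1) Handle the base step: every $w\in S_1$ is forward facing (already recorded), so Lemma~\ref{L:ForwardFacing} applied with $a=w$, $r=1$ immediately gives that any two neighbours $b,b'\in S_2$ of $w$ are joined by a path in $(S_2\cup S_3)\cap B_2(w)$. (2) Propagate along a path in $S_1\cup S_2$: fix a path $p_0,p_1,\dots,p_k$ in $S_1\cup S_2$ connecting a chosen neighbour of $x$ in $S_1$ to a chosen neighbour of $y$ in $S_1$; for each consecutive pair $p_j,p_{j+1}$, either both are in $S_2$ (then they are adjacent, nothing to do), or one is in $S_1$ and the other in $S_2$ — in which case it is an edge of the type handled by step (1) after possibly a pentagon completion. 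Actually the cleanest route is: for each edge $p_j p_{j+1}$ with $p_j\in S_1$, the two endpoints adjacent to $p_j$ that we need to splice are vertices of $S_2$ adjacent to $p_j$, and Lemma~\ref{L:ForwardFacing} (with $a=p_j$) connects them inside $S_2\cup S_3$. For edges entirely inside $S_2$, those endpoints are already adjacent, hence certainly connected in $S_2\cup S_3$. (3) Concatenate: the spliced path starts at $x$, ends at $y$, and stays in $S_2\cup S_3$; finally extend to arbitrary $a,b\in S_2\cup S_3$ by pushing each of $a,b$ down one step to $S_2$ along a geodesic (the vertex of $S_2$ on a geodesic from $c$ to $a$ is adjacent to $a$ and lies in $S_2\cup S_3$), giving connectivity of the whole union.

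\textbf{Main obstacle.} The delicate point is making sure that when I ``splice'' at a vertex $p_j\in S_1$ using Lemma~\ref{L:ForwardFacing}, the two $S_2$-vertices I am trying to join really are \emph{neighbours of $p_j$}, not merely vertices of $S_2$ near $p_j$; this requires choosing the path in $S_1\cup S_2$ so that consecutive vertices in $S_1$ do not occur (the Farey graph $\cC(\Sigma-c)$ has no edges of the relevant sort only after restricting, so $S_1$ itself has no edges — this is exactly the statement ``$S_1$ has no edges'' recorded earlier), which forces the path in $S_1\cup S_2$ to alternate appropriately and guarantees that each $S_1$-vertex on it is flanked by $S_2$-vertices adjacent to it. So the real content is a careful bookkeeping of the path structure in $S_1\cup S_2$, together with one application each of Lemma~\ref{L:ForwardFacing} and Lemma~\ref{L:Pentagon3} (the latter already invoked inside Lemma~\ref{L:ForwardFacing}); no genuinely new geometric input beyond what the section has assembled is needed, which is why I expect this lemma to be short, with the only subtlety being the reduction from $S_2\cup S_3$ to the local moves around $S_1$.
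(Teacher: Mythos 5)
Your proposal is correct and is essentially the paper's own argument: join the two points through $S_1\cup S_2$ using Lemma \ref{L:S12connected}, observe that $S_1$ has no edges so every $S_1$-vertex on the path is flanked by $S_2$-neighbours, and splice out each such vertex with Lemma \ref{L:ForwardFacing} (every vertex of $S_1$ being forward facing). The only difference is cosmetic: the paper phrases the reduction as joining two points of $S_2\cup S_3$ directly by a path in $S_1\cup S_2\cup S_3$, rather than first dropping to $S_2$ and to shadows in $S_1$.
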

%
%

\begin{proof}
Since every element of $S_1$ is forward facing, Lemma \ref{L:ForwardFacing} gives that if $a\in S_1$, and $b, b'\in S_2$ are both neighbours of $a$, then there is a path from $b$ to $b'$ in $S_2\cup S_3$.

Now, given two points of $S_2\cup S_3$ that we wish to join with a path, we first join them by a path in $S_1 \cup S_2 \cup S_3$, which is possible by Lemma \ref{L:S12connected}. Since $S_1$ does not have edges, this path cannot contain consecutive vertices in $S_1$. The previous remark allows the path to be modified by replacing each entry of the path into $S_1$ by a path in $S_2\cup S_3$. 
\end{proof}

\subsection{The proof} Our key lemma is the following. 

\begin{lemma}\label{L:05key}
Suppose $a\in S_r$ and $b, b'\in S_{r+1}\cap B_1(a)$. Then there is a path $b, x_1, x_2, \ldots, x_k, b'$ in $\cC \Sigma$ with 
\begin{enumerate}
\setlength\itemsep{0.15em}
\item $d(x_i, a)\in \{2, 3\}$, 
\item $d(x_i, c)\geq r$,  
\item $d(x_i,c)=r$ implies $d(x_i,a)=2$, and the unique vertex adjacent to both $x_i$ and $a$ lies in $S_{r-1}$, and that $x_i$ has unique backtracking, and
\item if $a$ has unique backtracking then $d(x_i,c)=r$ also implies $x_i$ has no sidestepping. 
\end{enumerate}
\end{lemma}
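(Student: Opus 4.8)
The plan is to work in the Farey graph $\cC(\Sigma-a)$ and build the path $b,x_1,\ldots,x_k,b'$ by first producing a path in $\cC(\Sigma-a)$ from $b$ to $b'$ and then, as in the proof of Lemma~\ref{L:ForwardFacing}, inserting pentagon-produced vertices between consecutive vertices along the way so as to control distances to both $a$ and $c$. The crucial point is that if $y\in\cC(\Sigma-a)$ is at distance $2$ from $a$ in $\cC\Sigma$, then by Corollary~\ref{C:d2unique} there is a \emph{unique} vertex adjacent to both $y$ and $a$, and the properties asked for in (3) and (4) are all statements about whether that unique vertex and the geodesic structure behaves well. So the real work is a case analysis of the vertices $b,b'$ according to whether they are at distance $1$ from $a$ realized by disjointness in $\cC\Sigma$ or by being equal up to the Farey-graph structure, and whether $a$ itself has unique backtracking.

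First I would set up notation: since $b,b'\in S_{r+1}\cap B_1(a)$ and $a\in S_r$, both $b$ and $b'$ are disjoint from $a$, hence are vertices of the Farey graph $\cC(\Sigma-a)$ (which is $\cC\Sigma_{1,1}\cong\cC\Sigma_{0,4}$). Let $p$ be the (unique, if it exists) vertex of $S_{r-1}$ adjacent to $a$; if $a\in S_1$ then $p=c$. The key subclaim is that \emph{removing a single vertex from the Farey graph leaves it connected}, so we can find a path $b=x_0',x_1',\ldots,x_k'=b'$ in $\cC(\Sigma-a)$ avoiding $p$ (when $p$ is defined; when $a$ does not have unique backtracking this step needs a little care, see below). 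Every $x_j'$ is then disjoint from $a$, so $d(x_j',a)\le 1$; the ones equal to $a$-distance $1$ we keep, but since they lie on the far side we want them at $\cC\Sigma$-distance $\ge r$ — for this I would, as in Lemma~\ref{L:HasForwardFacing} and Lemma~\ref{L:ForwardFacing}, not take $x_j'$ themselves but rather thicken: for each consecutive disjoint pair apply Lemma~\ref{L:Pentagon3} or Lemma~\ref{L:Pentagon2} to replace the pair by a short pentagon path whose interior vertices lie in $S_r\cup S_{r+1}\cup S_{r+1}$ and are at $a$-distance $2$ or $3$. Then properties (1) and (2) are immediate from the pentagon lemmas, and (3) follows because a pentagon vertex at distance exactly $r$ from $c$ is forced (again by the BGI argument inside the pentagon lemma's proof) to have the adjacent-to-$a$ vertex land in $S_{r-1}$ and to have unique backtracking.

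The main obstacle is handling the hypothesis-sensitive clause (4): we only get ``no sidestepping'' for the $x_i$ with $d(x_i,c)=r$ \emph{under the extra assumption that $a$ has unique backtracking}. The issue is that a pentagon vertex $x_i$ at distance $r$ could a priori be adjacent to another vertex of $S_r$; I would rule this out exactly as in Lemma~\ref{L:HasForwardFacing}: if $z\in S_r$ is adjacent to $x_i$, then since $x_i$ has unique backtracking to a vertex of $S_{r-1}$, the triple $z,x_i$, and that $S_{r-1}$ vertex would be forced into a short cycle — but one must push this through carefully using that $a$'s unique backtracking vertex $p$ is the one controlling everything, and that $x_i$ lies in the one-holed torus/four-holed sphere $\cC(\Sigma-a)$. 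Concretely, the unique vertex adjacent to both $x_i$ and $a$ is in $S_{r-1}$ by (3); if $a$ has unique backtracking that vertex must be $p$; so $x_i$ backtracks only through $p$; then a sidestep $z$ for $x_i$ with $z\in S_r$ disjoint from $x_i$ gives, together with $p$ and $a$, a configuration violating Lemma~\ref{L:no3or4cycles}. I expect the bookkeeping — making sure the pentagon path stays in $B_2(a)$ or the stated distance bands \emph{and} simultaneously that every newly inserted vertex at $c$-distance $r$ satisfies (3)--(4) — to be the delicate part, but each individual step reduces to one of Lemmas~\ref{L:Pentagon2}, \ref{L:Pentagon3}, \ref{L:no3or4cycles}, Corollary~\ref{C:d2unique}, and the connectivity of the punctured Farey graph, which are all available.
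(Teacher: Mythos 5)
There is a genuine gap: your proposal never introduces the one mechanism that makes all four properties verifiable, namely a large power of the Dehn twist about $a$ applied to the whole auxiliary path so that $d_a(x_i,c)\gg M$ for every vertex $x_i$. The paper's proof takes a path in $S_2(a)\cup S_3(a)$ directly (this exists by Lemma \ref{L:S23connected} applied with center $a$ — so the ``thickening into $B_3(a)\setminus B_1(a)$'' that you try to build by hand out of pentagons is already available as a black box), twists it about $a$, and then reads off (2)--(4) from the Bounded Geodesic Image Theorem: every geodesic from $x_i$ to $c$ must enter $B_1(a)$, and since $B_1(a)\subset S_{r-1}\cup S_r\cup S_{r+1}$ while $d(x_i,a)\geq 2$, one gets $d(x_i,c)\geq r$; when equality holds the entry point must be the first vertex of the geodesic and must lie in $S_{r-1}\cap B_1(a)$, whence (3) via uniqueness of length-$2$ geodesics, and (4) via the triangle argument (where one also needs $d_a(y,c)>M$ for the putative sidestep $y$, obtained from the coarse Lipschitz property — again unavailable without the twist).

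Your substitute for this step does not work. The consecutive vertices $x_j'$ of a path in the Farey graph $\cC(\Sigma-a)$ have uncontrolled distance to $c$ (anything in $\{r-1,r,r+1\}$), so neither Lemma \ref{L:Pentagon2} (which needs both input curves in $S_{r-1}$) nor Lemma \ref{L:Pentagon3} (which needs both in $S_r$ and disjoint from a common curve in $S_{r-1}$) applies to a general consecutive pair; and even where they do apply, their conclusions locate the new vertices relative to $c$ via annular projections to the \emph{input} curves, not to $a$, so they cannot deliver clauses (3) and (4), which are statements about the annulus around $a$. (This is why Lemma \ref{L:ForwardFacing} needs $a$ forward facing: that hypothesis is exactly what pins down the $c$-distances of the Farey-path vertices, and it is absent here.) Finally, your splicing is incomplete: the pentagon arc between $x_j'$ and $x_{j+1}'$ and the one between $x_{j+1}'$ and $x_{j+2}'$ both terminate at neighbours of $x_{j+1}'$, and joining them while staying in $B_3(a)\setminus B_1(a)$ is precisely the content of Lemma \ref{L:S23connected}, which you should invoke rather than re-derive.
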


\begin{proof}
Consider a path $x_1, \ldots, x_k$ in $S_2(a)\cup S_3(a)$, starting next to $b$ and ending next to $b'$, as must exist by Lemma \ref{L:S23connected}. Without loss of generality, by applying a large power of the Dehn twist in $a$, we can assume $d_a(x_i,c)$ is much greater than $M$ for all $i$. We will prove the lemma with this choice of path. 

The first point is true by construction. For the second point, suppose that $d(x_i, c)<r$. By the Bounded Geodesic Image Theorem, a geodesic from $x_i$ to $c$ must pass though $B_1(a)$. 
But any point in the geodesic cannot be in $B_1(a)$ because $B_1(a) \subset S_{r-1}\cup S_r\cup S_{r+1}$. This proves the second point. 

For the third point, we again use that any geodesic from $x_i$ to $c$ must contain a vertex $z$ in  $B_1(a)$; in fact $z$ must be the vertex adjacent to $x_i$ on the geodesic, and $z$ must be in $B_1(a)\cap S_{r-1}$. Thus $z$ lies on a geodesic from $x_i$ to $a$ of length 2. Since geodesics of length 2 are unique, there is only one possibility for $z$. 

For the last point, suppose that $y$ is a sidestep of $x_i$. There is a unique vertex $z$ adjacent to $x_i$ and $a$, and it is in $S_{r-1}$, so since $y\in S_r$ by assumption we get that $y\neq z$ and that $y$ cuts $a$. By the coarse Lipschitz property of subsurface projections, we get that $d_a(y,c)>M$. The same analysis as in the previous paragraph shows 
 that the geodesic from $y$ to $c$ must start with a curve in $S_{r-1}\cap B_1(a)$. Since $a$ has unique backtracking, the curve $z$ is the unique such curve, so we get that $y$ is adjacent to $z$. But now $y,x_i,z$ is a triangle, contradicting the fact triangles do not exist in $\cC \Sigma$. 
\end{proof}

\begin{lemma}\label{L:LinkAlmostConnected}
Suppose $a\in S_r$ and $b, b'\in S_{r+1}$ are neighbours of $a$. Then $b$ and $b'$ can be joined by a path in $(S_{r+1} \cup S_{r+2} \cup S_{r+3})\cap B_6(a)$. 
\end{lemma}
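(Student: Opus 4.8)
The plan is to bootstrap from Lemma~\ref{L:05key}, whose output is a path $b, x_1, \ldots, x_k, b'$ in $\cC\Sigma$ with $d(x_i,a)\in\{2,3\}$, all $x_i$ satisfying $d(x_i,c)\geq r$, and with the additional structural guarantees when $d(x_i,c)=r$. The issue is that the $x_i$ themselves need not lie in $S_{r+1}\cup S_{r+2}\cup S_{r+3}$: some may have $d(x_i,c)$ larger than $r+3$, and the consecutive $x_i, x_{i+1}$ need not be adjacent vertices that we can use directly (they are only known to be within $B_3(a)$). So first I would turn this path into a genuine path between $b$ and $b'$ inside $B_6(a)$: between each consecutive pair $x_i, x_{i+1}$ (adjacent in the path already, hence at distance $1$), we are fine; the real task is to pull the path down in radius from $c$ while keeping it close to $a$.

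The key step is to handle the $x_i$ that are ``too far'' from $c$, i.e. with $d(x_i,c)\geq r+1$. For such a vertex, since $d(x_i,a)\le 3$ and $a\in S_r$, we have $d(x_i,c)\le r+3$ automatically, so in fact \emph{every} $x_i$ with $d(x_i,c)\ge r+1$ already lies in $S_{r+1}\cup S_{r+2}\cup S_{r+3}$. Thus the only vertices on the path $b,x_1,\ldots,x_k,b'$ that fail to lie in the target set are those with $d(x_i,c)=r$, together with $b,b'$ themselves which lie in $S_{r+1}$ by hypothesis. So the whole problem reduces to: whenever $x_i\in S_r$ appears on the path, splice in a short detour avoiding $S_r$ near that vertex. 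Here I would use part (3) of Lemma~\ref{L:05key}: such an $x_i$ has unique backtracking, its unique $S_{r-1}$-neighbour $z_i$ is the unique vertex adjacent to both $x_i$ and $a$, and $d(x_i,a)=2$. If additionally $a$ has unique backtracking, part (4) gives that $x_i$ has no sidestepping, i.e. $x_i$ is forward facing. In the forward-facing case, Lemma~\ref{L:ForwardFacing} applied at $x_i$ lets me replace each visit of the path to $x_i$ (entering from a neighbour in $S_{r+1}$ and leaving to a neighbour in $S_{r+1}$, both of which are neighbours of $x_i$) by a path in $(S_{r+1}\cup S_{r+2})\cap B_2(x_i)$, hence — since $x_i\in B_3(a)$ — in $B_5(a)$, staying within the target radius range.

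The main obstacle is the case where $a$ does \emph{not} have unique backtracking: then part (4) of Lemma~\ref{L:05key} gives nothing, and an $x_i\in S_r$ on the path may have sidesteps. However, in that case $x_i$ still has unique backtracking (part (3)), with unique down-neighbour $z_i$, and the neighbours of $x_i$ on the path lie in $S_{r+1}$, so they are distinct from $z_i$ and do not equal $x_i$. The curve complex $\cC(\Sigma-x_i)$ is the Farey graph, which is not disconnected by removing $z_i$; so I can connect the two path-neighbours of $x_i$ by a path in $\cC(\Sigma-x_i)$ avoiding $z_i$, all of whose vertices are in $S_{r+1}\cup S_r$ (they avoid $z_i\in S_{r-1}$, and vertices adjacent to $x_i\in S_r$ lie in $S_{r-1}\cup S_r\cup S_{r+1}$). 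Any vertex of this detour landing in $S_r$ is again forward facing or at least has the structure of Lemma~\ref{L:05key}(3) relative to $a$ via $z_i$'s uniqueness — and at worst I iterate, or directly invoke that $x_i$ together with the detour vertex and $z_i$ would form a triangle, which is impossible in $\cC\Sigma_{0,5}$ by Lemma~\ref{L:no3or4cycles}; this forces the detour vertices to actually lie in $S_{r+1}$. Each such detour stays in $B_1(x_i)\subset B_4(a)$, and then I apply Lemma~\ref{L:Pentagon3}-style pentagon insertions (as in Lemma~\ref{L:ForwardFacing}) to make consecutive detour vertices adjacent, pushing into $S_{r+2}\cup S_{r+3}$ and $B_6(a)$ but no further. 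Concatenating all the local detours with the untouched far segments of the original path gives the desired path in $(S_{r+1}\cup S_{r+2}\cup S_{r+3})\cap B_6(a)$. I expect the bookkeeping of the radius bound (checking $2+3\le 5$, $1+3+\text{pentagon}\le 6$) and the verification that detour vertices cannot drop into $S_{r-1}$ or $S_r$ to be the fiddly part, but conceptually the triangle-freeness of $\cC\Sigma_{0,5}$ and Lemma~\ref{L:05key} do all the heavy lifting.
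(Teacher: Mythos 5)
Your proposal correctly reproduces the easy half of the argument: when $a$ has unique backtracking, Lemma \ref{L:05key}(4) makes every $S_r$-vertex of the path forward facing, and Lemma \ref{L:ForwardFacing} applied at each such vertex splices it out. This is exactly the paper's intermediate sublemma. The gap is in your treatment of the general case, where $a$ lacks unique backtracking, and it occurs in two places. First, you assert that ``the neighbours of $x_i$ on the path lie in $S_{r+1}$.'' Without part (4) of Lemma \ref{L:05key} the $S_r$-vertices of the path may have sidestepping, so two consecutive $x_i, x_{i+1}$ can both lie in $S_r$; nothing in parts (1)--(3) rules this out. The paper handles this by first invoking Lemma \ref{L:Pentagon2} (with indices shifted) to replace each edge between adjacent $S_r$-vertices by the length-4 arc of a pentagon running through $S_{r+1}\cup S_{r+2}$, so that the $S_r$-vertices become isolated on the path.

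Second, your Farey-graph detour at an $S_r$-vertex $x_i$ does not work as stated. Avoiding the unique down-neighbour $z_i$ only places the detour vertices in $S_r\cup S_{r+1}$; to exclude $S_r$ you appeal to a triangle with $z_i$, but a detour vertex $v\in S_r$ is only forced to be adjacent to $z_i$ if every geodesic from $v$ to $c$ enters $S_{r-1}\cap B_1(x_i)=\{z_i\}$, and that requires Bounded Geodesic Image control $d_{x_i}(v,c)>M$, which you have not arranged (the Dehn twisting in Lemma \ref{L:05key} was performed about $a$, not about $x_i$). Since $x_i$ may genuinely have sidesteps, some detour vertices can land in $S_r$, and ``at worst I iterate'' gives no termination or diameter control. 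The correct move, and what the paper does, is to observe that each isolated $S_r$-vertex $x_i$ \emph{has unique backtracking} by Lemma \ref{L:05key}(3), and then to apply the unique-backtracking case of the lemma recursively with $x_i$ in the role of $a$: this re-runs Lemma \ref{L:05key} with fresh large twists about $x_i$, producing a replacement path in $(S_{r+1}\cup S_{r+2}\cup S_{r+3})\cap B_4(x_i)\subset B_6(a)$. Your sketch shortcuts this recursion and the shortcut does not close.
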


\begin{proof} 
We prove the lemma using a sequence of sublemmas. 

\begin{sublemma}\label{SL1}
Suppose $a\in S_r$ is forward facing and $b, b'\in S_{r+1}$ are neighbours of $a$. Then $b$ and $b'$ can be joined by a path in $(S_{r+1} \cup S_{r+2} \cup S_{r+3})\cap B_2(a)$. 
\end{sublemma} 
\begin{proof}
This is  Lemma \ref{L:ForwardFacing}. (Here we don't need $S_{r+3}$.)
\end{proof}

\begin{sublemma}\label{SL2}
Suppose $a\in S_r$ has unique backtracking and $b, b'\in S_{r+1}$ are neighbours of $a$. Then $b$ and $b'$ can be joined by a path in $(S_{r+1} \cup S_{r+2} \cup S_{r+3})\cap B_4(a)$. 
\end{sublemma} 
\begin{proof}
Lemma \ref{L:05key} gives the existence of a path in $(S_r \cup S_{r+1}\cup S_{r+2}\cup S_{r+3})\cap B_3(a)$ from $b$ to $b'$, with the extra property that each vertex on the path in $S_r$ is forward facing and is in $B_2(a)$.  Since forward facing vertices in particular have no side stepping, the path does not have adjacent vertices in $S_r$. 
%
%
 We can now replace the vertices in $S_r$ with paths given by Sublemma \ref{SL1} to obtain the result.
\end{proof}

To complete the proof, note that Lemma \ref{L:05key} gives the existence of a path in $(S_r \cup S_{r+1}\cup S_{r+2}\cup S_{r+3})\cap B_3(a)$ from $b$ to $b'$, with the extra property that each vertex on the path in $S_r$ has unique backtracking and is in $B_2(a)$. 

Using Lemma \ref{L:Pentagon2} we can modify this path to get the additional assumption that no two adjacent vertices are in $S_r$, resulting in a path in $(S_r \cup S_{r+1}\cup S_{r+2}\cup S_{r+3})\cap B_4(a)$. Sublemma \ref{SL2} can then be used to modify the path to avoid the vertices in $S_r$. 
\end{proof}

\begin{proof}[Proof of Proposition \ref{P:key05}]
The first claim follows from Lemma \ref{L:LinkAlmostConnected}, and the second claim from Lemma \ref{L:Pentagon2}.
\end{proof}

\section{The medium and high complexity cases}

The goal of this section is to prove Proposition \ref{P:keyhigher}. Throughout this section, we assume $\Sigma$ is medium or high complexity. We continue to use $\cC_0\Sigma$ to denote the essentially non-separating curve graph, and $\cC_c\Sigma$ to denote the result of adding the vertex $c$ to $\cC_0\Sigma$ as well as all edges between curves that are disjoint and have different distances to $c$ (see Section \ref{SS:Cc} for details).

\subsection{A note on the medium complexity case.} To help motivate why the medium complexity case has a slightly different statement, we give the following observation, which will not be used. 

\begin{lemma}
If $\Sigma=\Sigma_{0,6}$ and $c$ is a loop around a pair of punctures, then $S_1^c$ is disconnected.
\end{lemma}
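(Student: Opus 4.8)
The claim is that for $\Sigma = \Sigma_{0,6}$ with $c$ a loop around a pair of punctures, the sphere $S_1^c = S_1 \cap \cC_c\Sigma$ is disconnected. The plan is to exhibit a vertex of $S_1^c$ that has no neighbours inside $S_1^c$, which suffices by the same reasoning as in Corollary \ref{C:disconnected}. First I would unwind the combinatorics of $\cC_c\Sigma$ at radius $1$: a vertex $\alpha$ lies in $S_1^c$ exactly when $\alpha$ is essentially non-separating, disjoint from $c$, and at distance $1$ from $c$; and two such $\alpha,\beta \in S_1^c$ are joined by an edge in $\cC_c\Sigma$ only if they are disjoint \emph{and} (since they have the same distance $1$ to $c$) the multicurve $\alpha\cup\beta$ is essentially non-separating, i.e. $\Sigma-(\alpha\cup\beta)$ has at most one non-pants component, equivalently $\alpha\cup\beta\in\cC_0\Sigma$.

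Next I would choose the right candidate vertex. Label the six punctures so that $c$ encircles punctures $\{1,2\}$. Since $c$ is disjoint from $\alpha$, $\alpha$ lives in the complement of $c$, which is a sphere with the two punctures $\{1,2\}$ replaced by one boundary circle, i.e. essentially $\Sigma_{0,5}$ with peripheries $\{c,3,4,5,6\}$. A natural choice is a pants curve $\alpha_0$ that encircles two of the ``outer'' punctures, say $\{3,4\}$ — this is a pants curve on $\Sigma_{0,6}$, hence essentially non-separating, hence a vertex of $\cC_c\Sigma$, and it is disjoint from and distinct from $c$, so $\alpha_0\in S_1^c$. I would then check that $\alpha_0$ has no neighbour in $S_1^c$: any $\beta\in S_1^c$ adjacent to $\alpha_0$ in $\cC_c\Sigma$ must be disjoint from $\alpha_0$ and also disjoint from $c$, and $\alpha_0\cup\beta$ (equivalently $\alpha_0\cup\beta\cup c$ up to the pants component) must have complement with at most one non-pants piece. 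But cutting $\Sigma_{0,6}$ along $c$ and $\alpha_0$ already produces a pants (the pair $\{1,2\}$ side is a pants, the pair $\{3,4\}$ side is a pants) together with a four-holed sphere with punctures $\{5,6\}$ and two boundary circles; any further disjoint curve $\beta$ that is nontrivial and not parallel to $c$ or $\alpha_0$ must live in that four-holed sphere, and such a $\beta$ either is peripheral (excluded) or encircles $\{5,6\}$ (but then it is isotopic to a curve parallel to the union of the other two boundaries, and $\beta\cup\alpha_0$, or rather $\alpha_0, \beta, c$ together, cuts off \emph{two} pants plus nothing else — so I should double-check whether $\alpha_0\cup\beta$ is essentially non-separating here). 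The delicate point is precisely this case analysis: I need to confirm that for every admissible $\beta$, the multicurve $\alpha_0\cup\beta$ fails the essentially-non-separating condition, OR that $\beta\notin S_1^c$ (because $\beta$ separates in a bad way and is too far from $c$). I expect that the curve encircling $\{5,6\}$ is \emph{not} at distance $1$ from $c$ when measured in $\cC_c\Sigma$ — or more simply that every such $\beta$ together with $\alpha_0$ bounds a configuration with two non-pants pieces — so no edge exists.

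The main obstacle is therefore the finite but somewhat fiddly enumeration: I must be careful about the difference between adjacency in $\cC\Sigma$ and adjacency in $\cC_c\Sigma$, and about which separating curves count as ``essentially non-separating'' via the pants-curve or eventually-non-separating clauses of Definition \ref{D:ENSgeneral} and Definition \ref{D:MultiENS}. A clean way to organize this is to note that any $\beta$ adjacent to $\alpha_0$ and disjoint from $c$ is supported in $\Sigma - (c \cup \alpha_0)$, classify the (finitely many) isotopy classes of essential non-peripheral curves there, and for each one check the two conditions ``$\beta \in S_1^c$'' and ``$\alpha_0\cup\beta$ essentially non-separating'' and show they are never simultaneously satisfied. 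If, on inspection, some $\beta$ does survive, the fallback is to replace $\alpha_0$ by a pants curve around a different pair, or to argue instead that $\alpha_0$ and such $\beta$ lie in distinct components by a parity/linking argument with $c$; but I expect the direct enumeration to close the argument, giving an isolated vertex of $S_1^c$ and hence its disconnectedness.
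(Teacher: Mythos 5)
There is a genuine gap, and it occurs exactly at the point you yourself flagged as needing a double-check. Your candidate $\alpha_0$, a loop around the punctures $\{3,4\}$, is \emph{not} an isolated vertex of $S_1^c$: take $\beta$ to be a loop around $\{5,6\}$ disjoint from $\alpha_0$ and from $c$. Both $\alpha_0$ and $\beta$ are pants curves disjoint from and non-isotopic to $c$, hence both lie in $S_1^c$, and $\Sigma_{0,6}-(\alpha_0\cup\beta)$ consists of the pants containing $\{3,4\}$, the pants containing $\{5,6\}$, and a four-holed sphere containing punctures $1$ and $2$ and the two boundary circles. That is exactly one non-pants component, so $\alpha_0$ and $\beta$ are joined by an edge in $\cC_c\Sigma$ (and $\alpha_0\cup\beta$ is also essentially non-separating by clause (2) of Definition \ref{D:MultiENS}, since both components are pants curves). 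Note the relevant complement is that of $\alpha_0\cup\beta$, not of $c\cup\alpha_0\cup\beta$. The same happens for every vertex of $S_1^c$: each $ij$ loop with $\{i,j\}\subset\{3,4,5,6\}$ is adjacent to a complementary $kl$ loop, so $S_1^c$ has no isolated vertices at all, and the strategy of exhibiting one cannot be rescued by choosing a different pants curve.

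What does work is your ``fallback'' idea made precise, and it is the paper's argument. Every vertex of $S_1^c$ is an $ij$ loop with $\{i,j\}\subset\{3,4,5,6\}$ (your reduction to the complement of $c$ is correct), and two such loops can be disjoint only if they enclose disjoint pairs of punctures. Hence in $S_1^c$ a $34$ loop can only be adjacent to a $56$ loop, a $35$ loop only to a $46$ loop, and a $36$ loop only to a $45$ loop. This partitions the vertices of $S_1^c$ into three nonempty classes, indexed by the perfect matchings $\{34|56\}$, $\{35|46\}$, $\{36|45\}$ of $\{3,4,5,6\}$, with no edges between distinct classes; so $S_1^c$ has at least three connected components and is disconnected.
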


\begin{proof}
Label the punctures by $\{1,2,3,4,5,6\}$. Every element of $\cC_c\Sigma$ is a loop around a pair of punctures, and we will call a loop around puncture $i$ and puncture $j$ an ``$ij$ loop". 

Suppose $c$ is a $12$ loop. Note that every element of $S_1^c$ is an $ij$ loop with $i,j \in \{3,4,5,6\}$. Note also that, in $S_1^c$, $34$ loops can only be adjacent to $56$ loops, $35$ loops can only be adjacent to $46$ loops, and $36$ loops can only be adjacent to $45$ loops. So $S_1^c$ has at least three connected components. 
\end{proof}


\subsection{The structure of the proof.}
For any $z\in S_r^c$, we define
$$\cO(z) = \{a \in S_1(z)\cap \cC_c\Sigma : d_U(a,c)>M\},$$
where $U$ is the unique component of $\Sigma-z$ that isn't a pants.

Note that Theorem \ref{T:BGI} implies that $\cO \subset S_{r+1}^c$. We think of $\cO$ as the subset of $S_1(z)$ which is most obviously in $S_{r+1}^c$. In subsequent subsections we will prove the following two lemmas. 

\begin{lemma}\label{L:IntoO}
For $z\in S_r^c$, any $x\in S_1(z)\cap S_{r+1}^c$ can be connected to $\cO(z)$ by a path in $S_1(z)\cap S_{r+1}^c$. 

 Moreover, for any $N$, any point in $\cO(z)$ can be connected by a path in $\cO(z)$ to a point $e$ with $d_U(e, c)>N$. 
\end{lemma}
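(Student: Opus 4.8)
\textbf{Proof plan for Lemma \ref{L:IntoO}.}

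The plan is to work entirely inside the curve graph $\cC U$ of the non-pants component $U$ of $\Sigma - z$, using the Bounded Geodesic Image Theorem together with the connectivity results on the essentially non-separating curve graph from Section \ref{S:ENS}. Observe first that a curve $a \in S_1(z) \cap \cC_c\Sigma$ is precisely a curve disjoint from $z$ (hence living in $\cC U$, after noting the pants component contributes nothing), and the condition $a \in \cO(z)$ is that $d_U(a,c)$ exceeds the threshold $M$; by Theorem \ref{T:BGI} any such $a$ is forced into $S_{r+1}^c$, since every geodesic from $a$ to $c$ must then contain a curve not cutting $U$, and the only such curve disjoint from $a$ is $z$ itself. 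So the first claim amounts to: starting from an arbitrary $x \in S_1(z) \cap S_{r+1}^c$, move through $\cC U$ (staying in $\cC_c\Sigma$, i.e. among essentially non-separating curves of $\Sigma$ disjoint from $z$) until $d_U(\cdot, c) > M$, while never dropping out of $S_{r+1}^c$.

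First I would check that $\cC_0 U = \cC_c\Sigma \cap \cC U$, or at least that the relevant subgraph is one to which Lemmas \ref{L:HighGenusConnected}, \ref{L:Genus1Connected}, \ref{L:Genus0Connected} apply — here $U$ is a cut surface of strictly smaller complexity than $\Sigma$, with $z$'s two sides of $\partial U$ forming a pair, and Lemma \ref{L:subgraph} identifies $\cC_0 U$ as a subgraph of $\cC_0\Sigma$. The key point is that the relevant connectivity lemma gives a path in $\cC_0 U$ from $x$ to any target curve $e$ with $d_U(e,c) > N$ (such $e$ exists by taking large powers of a pseudo-Anosov or partial twist on $U$, pushing the projection arbitrarily far), and moreover this path stays within bounded Hausdorff distance $C$ of a $\cC U$-geodesic from $x$ to $e$. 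Since $d_U(x,c) > M$ already (as $x \in S_{r+1}^c \cap S_1(z)$ forces this via Theorem \ref{T:BGI} applied contrapositively — if $d_U(x,c) \le M$ were possible for some such $x$ we would need a separate short argument, but the more robust route is: any $a \in S_1(z)$ with $a \in S_{r+1}^c$ has a geodesic to $c$ through $z$, which by BGI forces $d_U(a,c) > M$), and since $d_U(e,c) > N$ is huge, every vertex $w$ on this path has $d_U(w, c)$ bounded below: a vertex on a geodesic from $x$ to $e$ has projection distance to $c$ at least $\min(d_U(x,c), d_U(e,c)) - (\text{thin-triangle constant})$, and being within $C$ of the geodesic costs only a further additive constant. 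Choosing $M$ large enough relative to these constants, every such $w$ has $d_U(w,c) > M$, hence $w \in \cO(z) \subset S_{r+1}^c$, and in particular the whole modified path lies in $S_1(z) \cap S_{r+1}^c$. This simultaneously proves the "moreover" clause, since the path from $x \in \cO(z)$ to $e$ then lies in $\cO(z)$.

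The main obstacle is bookkeeping the constants so that $M$ (which was fixed once and for all from the BGI) is genuinely large enough: we need $M$ to dominate the hyperbolicity constant of $\cC U$ (uniform in $U$ by \cite{WebbUniformBGI}) plus twice the fellow-travelling constant $C$ from the Section \ref{S:ENS} lemmas plus a small absolute error, and one must confirm this can be arranged uniformly over all the cut surfaces $U$ that arise. A secondary subtlety is the genus-zero case where $\cC_0 U$ is only shown connected under the hypotheses of Lemma \ref{L:Genus0Connected}; I would verify that every non-pants component $U$ of $\Sigma - z$ for $\Sigma$ medium or high complexity (with $z$ essentially non-separating) falls under those hypotheses, handling the small sporadic cut surfaces by hand if needed. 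Finally, one should note that $d_U(\cdot,c)$ is well-defined since $c$ cuts $U$ — which holds because $z \in S_r^c$ with $r \ge 1$ means $c \ne z$ and $d(c,z) = r$, so $c$ intersects $z$ and hence cannot be isotoped off $U$; the edge case $r = 0$ is vacuous as then $S_1(z) \cap S_{r+1}^c$ reduces to $S_1(c)$ and the statement is immediate or handled separately.
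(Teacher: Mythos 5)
There are two genuine gaps, both at the heart of your argument. First, you claim that any $x\in S_1(z)\cap S_{r+1}^c$ automatically satisfies $d_U(x,c)>M$, ``by Theorem \ref{T:BGI} applied contrapositively.'' This is a converse error: the Bounded Geodesic Image Theorem says that large projection forces every geodesic to contain a curve not cutting $U$; the existence of one geodesic from $x$ to $c$ passing through $z$ says nothing about $d_U(x,c)$. If your claim were true, $S_1(z)\cap S_{r+1}^c$ would coincide with $\cO(z)$ and the first assertion of the lemma would be vacuous; the entire point is that a curve disjoint from $z$ can lie in $S_{r+1}$ without having large projection to $U$, and such curves must be walked over to $\cO(z)$.

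Second, even granting $d_U(x,c)>M$, your mechanism for keeping the path in $S_{r+1}^c$ fails. You assert that a vertex within $C$ of a $\cC U$-geodesic from $x$ to $e$ has $d_U(\cdot,c)$ at least $\min(d_U(x,c),d_U(e,c))$ minus a hyperbolicity constant. The correct lower bound for points on $[x,e]$ is the Gromov product $(x\,|\,e)_c$, which can be arbitrarily small even when both endpoints are far from $\pi_U(c)$ (think of two leaves of a tree on opposite sides of the root). So your path can dip into the region where $d_U(\cdot,c)\leq M$, and there a vertex $w$ disjoint from $z$ is only constrained to lie in $S_{r-1}\cup S_r\cup S_{r+1}$: you lose all control. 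The paper's proof avoids both problems by building the path one step at a time using Lemma \ref{L:abd}, certifying that each new vertex $x_{i+1}$ stays in $S_{r+1}^c$ by applying Theorem \ref{T:BGI} to the component $V$ of $\Sigma-(x_i\cup z)$ (whose non-cutting curves are disjoint from both $x_i$ and $z$, hence lie in $S_r\cup S_{r+1}$), and arranging $d_V(x_{i+1},x_0)>M$ at each step so that the resulting path is a geodesic ray in $\cC U$ based at $x_0$, which escapes every ball and in particular eventually has $d_U(\cdot,c)>N$. The connectivity lemmas of Section \ref{S:ENS} that you invoke are used in the proof of Lemma \ref{L:OConnected}, not here.
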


\begin{lemma}\label{L:OConnected}
Suppose Theorem \ref{T:SphereConnected} holds for surfaces of smaller complexity than $\Sigma$.  
If $\Sigma$ is high complexity, then $\cO(z)$ is connected. If $\Sigma$ is medium complexity, then any two points of $\cO(z)$ can be joined by a path in $B_2(z)\cap (S_{r+1}^c \cup S_{r+2}^c)$. 
\end{lemma}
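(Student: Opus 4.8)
\textbf{Proof plan for Lemma \ref{L:OConnected}.}

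The plan is to reduce connectivity of $\cO(z)$ to connectivity statements about $\cC V$ for various proper subsurfaces $V$ of $\Sigma$, where the latter are available either as Theorem \ref{T:SphereConnected} in lower complexity (applied inductively, as the hypothesis permits) or as the connectivity results for essentially non-separating curve graphs $\cC_0\Upsilon$ from Section \ref{S:ENS}. First I would fix the component $U$ of $\Sigma-z$ that is not a pants, and note that $\cO(z)$ is, essentially by definition and by Theorem \ref{T:BGI}, the set of curves $a\in \cC_c\Sigma$ disjoint from $z$, lying in $\cC U$, with $d_U(a,c)>M$. The point is that membership in $\cO(z)$ is governed by two conditions: lying in (a subgraph of) $\cC U$, and having subsurface projection to $U$ far from $c$. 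The key observation is that $\cC U$ itself, or rather $\cC_0 U$ together with the edge structure of $\cC_c\Sigma$, is connected with linearly controlled paths — this is exactly what Lemmas \ref{L:HighGenusConnected}, \ref{L:Genus1Connected}, \ref{L:Genus0Connected} give, since $U$ has complexity at least $1$ (and is not exceptional in the relevant sense once $\Sigma$ is medium or high complexity).

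The main steps, in order: (1) Identify $U$ and verify $\cC_0 U$ (viewed inside $\cC_c\Sigma$) is nonempty and $1$-dense in $\cC U$, using Lemmas \ref{L:C0NotEmpty} and \ref{L:1dense}, and that two disjoint essentially non-separating curves of $U$ are joined by an edge in $\cC_c\Sigma$ (Lemma \ref{L:subgraph}, applied with $\Upsilon = U$, $\Upsilon' = \Sigma$ cut along $z$, the two copies of $z$ being a paired boundary, and noting $z$ does not cut $U$). (2) Given two points $a, a' \in \cO(z)$, connect them first by \emph{any} path in $\cC_0 U \subset \cC_c\Sigma$ using the appropriate connectivity lemma from Section \ref{S:ENS}; call it $a = w_0, w_1, \ldots, w_n = a'$. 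The issue is that intermediate $w_j$ need not satisfy $d_U(w_j, c) > M$, so they may fail to be in $\cO(z)$ — they could have small $U$-projection and hence lie closer to $c$. (3) Repair the path: apply a large power of the Dehn twist $T_z$ in $z$ (which fixes $a$ and $a'$ if we twist before choosing them, or rather: choose the path, then twist) to push every $w_j$ to have $d_U(w_j, c) > M$; here one uses that $T_z$ acts on $\cC U$ and that $d_U(T_z^k w_j, c) \to \infty$ since the twist acts loxodromically... but this is too crude since twisting changes $a, a'$. The correct repair, following the "push away from $c$" philosophy, is to observe that the failure set $\{j : d_U(w_j,c) \le M\}$ is contained in a bounded neighborhood of $c$ in $\cC U$, and to reroute the path through curves of large $U$-projection: use that $\cC U$ minus any ball of radius $R$ around $c$ is still connected (for $U$ of complexity $\geq 2$ this is essentially Theorem \ref{T:SphereConnected} applied to $U$; for $U$ of complexity $1$, where $\cC U$ is a Farey graph, removing a ball around $c$ leaves something connected as used in Lemma \ref{L:ForwardFacing}). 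In the high complexity case $\Sigma$ has $\xi \geq 4$, so $U$ has $\xi(U) \geq 2$; this is why $\cO(z)$ is genuinely connected. In the medium complexity case $\xi(\Sigma) = 3$, so $U$ can have $\xi(U) = 1$ (a one-holed torus or four-holed sphere), where $\cC U$ is a Farey graph with disconnected spheres — here one cannot stay in $\cO(z) \subset S_{r+1}^c$ and must allow excursions into $S_{r+2}^c$, accounting for the weaker conclusion; the $B_2(z)$ control comes from the fact that the rerouting happens among curves all disjoint from $z$, hence within distance $2$ of $z$, with at most one intermediate "detour" step landing in $S_{r+2}^c$.

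\textbf{Main obstacle.} The hard part will be the repair step (3): given a path in $\cC_0 U$ whose interior vertices may be close to $c$, produce a path whose interior vertices are all in $\cO(z)$ (high complexity) or in $B_2(z) \cap (S_{r+1}^c \cup S_{r+2}^c)$ (medium complexity). In high complexity this amounts to connectivity of $\cC U$ outside a ball about $c$ — i.e. exactly Theorem \ref{T:SphereConnected}-type input for the lower-complexity surface $U$, which is why the lemma is phrased conditionally on that theorem holding in smaller complexity. One must be careful that the lower-complexity theorem gives connectivity of spheres (or consecutive unions of spheres), not literally of complements of balls; deducing complement-of-ball connectivity from sphere connectivity requires the observation (as in Lemma \ref{L:BasicConnected}'s proof, or \cite[Proposition 4.1]{RS}-style arguments) that every vertex outside the ball can be pushed outward to arbitrarily large radius while staying outside, combined with connectivity of each sphere. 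In medium complexity the genuinely delicate case is $U$ a four-holed sphere, where one must invoke the pentagon machinery of Section \ref{S:05} in miniature (or rather its consequence Lemma \ref{L:S23connected}) to connect neighbors inside $\cC U$ while controlling the radius — this is where allowing $S_{r+2}^c$ becomes unavoidable, and tracking the $B_2(z)$ bound through these pentagon replacements is the fiddly bookkeeping I would save for the full proof.
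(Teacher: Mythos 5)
Your high-complexity outline is workable and is essentially the paper's argument run in the wrong order, which is what creates the "repair" problem you then struggle with. The paper first uses Theorem \ref{T:SphereConnected} for $U$ to produce a path $p_0,\dots,p_\ell$ in $\cC U$ with $d_U(p_i,c)>M+C$ for all $i$, then $1$-densifies via Lemma \ref{L:1dense} and replaces each consecutive pair $q_i,q_{i+1}$ by a path in $\cC_0 U$ of \emph{bounded diameter} in $\cC U$, using the "Moreover" clauses of Lemmas \ref{L:HighGenusConnected}, \ref{L:Genus1Connected}, \ref{L:Genus0Connected}; the bounded diameter is precisely what keeps the replacement far from the projection of $c$ and hence inside $\cO(z)$. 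Your plan (first connect in $\cC_0 U$, then push away from $c$) never explains how the re-routed path is brought back into $\cC_0 U$ while retaining large $U$-projection distance from $c$, and if you fill that in you have reproduced the paper's proof with an extra redundant step.

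The medium-complexity half has a genuine gap. You locate the difficulty in $U$ possibly having complexity $1$ (a Farey graph), but this never occurs: for $(g,n)\in\{(2,0),(1,3),(0,6)\}$ and $z$ essentially non-separating, $U$ always has $\xi(U)=2$ (types $(1,0,1,0)$, $(1,1,0,1)$, $(0,3,1,0)$, $(0,4,0,1)$ in the notation of Definition \ref{D:cut}), so Theorem \ref{T:SphereConnected} for $U$ still supplies a path staying far from $c$. (Moreover, a Farey graph minus a \emph{ball} is hopelessly disconnected, so the fallback you describe would fail even if the case arose; Lemma \ref{L:ForwardFacing} removes only a single vertex.) The actual obstruction is that for these four types of $U$ the hypotheses of Lemmas \ref{L:Genus1Connected} and \ref{L:Genus0Connected} fail, so $\cC_0 U$ need not be connected, and one cannot remain among curves disjoint from $z$. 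The paper instead takes a sequence in $\cC_0 U$ whose consecutive terms intersect nicely (via Lemmas \ref{L:NonSepPath} and \ref{L:ArcPath}) and interlaces curves $w$ that \emph{cross} $z$ and are adjacent in $\cC_0\Sigma$ to both consecutive terms (Corollary \ref{C:MediumExtra}, resting on Lemma \ref{L:MediumExtra}). These $w$ are exactly what produce $S_{r+2}^c$ and $B_2(z)$ in the statement: any curve disjoint from $z\in S_r^c$ lies in $B_1(z)$ and in $S_{r+1}$ at worst, so your explanation of the distance-$2$ bound ("curves all disjoint from $z$, hence within distance $2$") cannot be the mechanism, and your plan is missing the construction that plays the role of Corollary \ref{C:MediumExtra}.
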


For both of these lemmas, it is implicit that we are working in (subgraphs of) the graph $\cC_c \Sigma$. Before we give the proofs, we explain how the two lemmas imply Proposition \ref{P:keyhigher}.

\begin{proof}[Proof of Proposition \ref{P:keyhigher}]
Suppose in order to find a contradiction that Proposition \ref{P:keyhigher} is false, and let $\Sigma$ be a minimal complexity surface for which it fails. Thus, Remark \ref{R:OneComplexityAtATime} gives that Theorem \ref{T:SphereConnected} can be assumed to hold for all surfaces of smaller complexity. 

Lemmas \ref{L:IntoO} and \ref{L:OConnected} imply \eqref{keyhigher:V} holds. 

To see that \eqref{keyhigher:adjacent} holds, suppose $x,y\in S_{r}^c$ are adjacent. By definition of the graph $\cC_c\Sigma$ and our complexity assumption on $\Sigma$, we know that there is a unique component $V$ of $\Sigma-(x\cup y)$ that is not a pants. 

%

By appealing to Lemma \ref{L:C0NotEmpty} and using that the mapping class group action on $\cC V$ has unbounded orbits,  we can find a curve $x_1\in \cC V$ that is essentially non-separating and such that $d_V(x_1, c)>M$. Then Theorem \ref{T:BGI} gives that every geodesic from $z$ to $c$ contains a curve not cutting $V$. The only curves not cutting $V$ are $x$ and $y$, so this shows that $x_1\in S_{r+1}^c$. This completes the verification of \eqref{keyhigher:adjacent}, showing that in fact Proposition \ref{P:keyhigher} actually does hold for $\Sigma$. 
\end{proof}

\subsection{Getting into $\cO(z)$.} We now turn to the proof of Lemma \ref{L:IntoO}, first giving another lemma on picking essentially non-separating curves. 

\begin{lemma}\label{L:abd}
Let $\Sigma$ have medium or high complexity. Let $a, b\in \cC \Sigma$ be disjoint and (individually) essentially non-separating. Then, there is a curve $d\in \cC \Sigma$ such that $d$ is disjoint from and not isotopic to $a$ and $b$, and such that $a\cup d$ and $b \cup d$ are essentially non-separating. If $V$ is a component of $\Sigma-(a\cup b)$ containing such a curve $d$, then the set of such $d$ is coarsely dense in $\cC V$. 
\end{lemma}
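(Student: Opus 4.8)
\textbf{Proof proposal for Lemma \ref{L:abd}.}

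The plan is to produce $d$ by choosing a suitable curve in a component of $\Sigma - (a\cup b)$, and then to verify the two essentially non-separating conditions from Definition \ref{D:MultiENS} by a case analysis on the topological types of the components. First I would observe that, since $a$ and $b$ are disjoint and essentially non-separating, cutting $\Sigma$ along $a\cup b$ produces a cut surface (or disjoint union of cut surfaces) $\Upsilon$, where the copies of $a$ and the copies of $b$ become paired boundary components. The complexity hypothesis on $\Sigma$ guarantees that $\Upsilon$ has a component $V$ that is large enough to be genuinely interesting — in particular not all components can be pants — and I would want to arrange that $V$ carries a curve satisfying the required disjointness and non-isotopy. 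The cleanest way to organize this is to apply the results of Section \ref{S:ENS}: the component $V$, viewed as a cut surface, satisfies the hypotheses of Lemma \ref{L:C0NotEmpty} (because $\Sigma$ is non-exceptional and medium or high complexity forces one of the components to have positive genus, at least two punctures, or at least one pair), so $\cC_0 V$ is non-empty, and Lemma \ref{L:subgraph} then promotes a curve in $\cC_0 V$ to a curve $d$ with $a\cup d$ and $b\cup d$ essentially non-separating in $\Sigma$.

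Concretely, the key steps in order are: (1) form the cut surface $\Upsilon=\Sigma - (a\cup b)$, record which boundary components are paired (the ones coming from $a$, resp.\ $b$); (2) use the complexity count $\xi(\Sigma)\geq 3$ to identify a component $V$ of $\Upsilon$ that is not a pants and meets the hypotheses of Lemma \ref{L:C0NotEmpty}, taking care of the degenerate possibilities (e.g.\ $a$ or $b$ being a pants curve, or $V$ being a four-holed sphere with two paired boundaries) separately if needed; (3) apply Lemma \ref{L:C0NotEmpty} to get $d\in \cC_0 V$, which is automatically disjoint from and non-isotopic to $a$ and $b$ since it lies in the interior of $V$ away from $\partial V$; (4) invoke Lemma \ref{L:subgraph} to conclude $a\cup d$ and $b\cup d$ are essentially non-separating in $\Sigma$ — this is exactly the statement there that an essentially non-separating curve in a subsurface, together with a curve not cutting that subsurface, forms an essentially non-separating multicurve; (5) for the coarse density statement, fix such a $V$, and note that by Lemma \ref{L:1dense} the essentially non-separating curves are $1$-dense in $\cC V$, hence the set of valid $d$ in $V$ is coarsely dense in $\cC V$ (and every valid $d$ contained in $V$ is of this form, so nothing is lost).

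The main obstacle I expect is step (2): checking that some component $V$ of the cut surface genuinely satisfies the hypotheses of Lemma \ref{L:C0NotEmpty} and (for the density statement) Lemma \ref{L:1dense}, across all the ways $a$ and $b$ can sit inside a medium- or high-complexity $\Sigma$. This requires bookkeeping with the invariants $(h,m,p,u)$ of each component and using that $\xi(\Sigma)=3g-3+n$ is at least $3$, together with the fact that a component that is a thrice-punctured sphere contributes nothing and must be paired off against the rest. The medium complexity surfaces ($\Sigma_{2,0}$, $\Sigma_{1,3}$, $\Sigma_{0,6}$) are the tight cases and may need to be examined individually; for instance when $\Sigma=\Sigma_{0,6}$ and $a,b$ are both loops around pairs of punctures, cutting leaves a component that is a sphere with punctures and paired boundaries, and one must check it is big enough. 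I would handle this by splitting into: $a$ or $b$ (or $a\cup b$) eventually non-separating in $\Sigma$ versus the purely genus-zero ``loop around punctures'' cases, and in each branch point to the appropriate clause of Lemma \ref{L:C0NotEmpty} or Lemma \ref{L:1dense}.
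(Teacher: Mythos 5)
Your step (4) is where the argument breaks. The second claim of Lemma \ref{L:subgraph} explicitly requires that $\beta$ ``does not cut $\Upsilon$ \emph{and is not a boundary component of} $\Upsilon$,'' but in your application $\beta=a$ (resp.\ $b$) is precisely a boundary component of $V$, so the hypothesis fails. This is not a technicality: the implication ``$d\in\cC_0V$ $\Rightarrow$ $a\cup d$ essentially non-separating in $\Sigma$'' is false. For instance, take $\Sigma=\Sigma_{3,0}$ and $a,b$ disjoint non-separating curves with $a\cup b$ non-separating, so that $V=\Sigma-(a\cup b)$ is a single cut surface of type $(1,0,2,0)$ with $\hat V\cong\Sigma$. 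Choose $d\subset V$ so that $d$ and the boundary copy $a_1$ cobound the genus of $V$: then $d$ is non-separating in $\hat V$, hence $d\in\cC_0V$, but $a\cup d$ separates $\Sigma$ and (there being no punctures) satisfies none of the three clauses of Definition \ref{D:MultiENS}. So membership in $\cC_0V$ does not certify the multicurve conditions against the boundary curves $a$ and $b$; some additional choice is needed. The paper handles this by abandoning the abstract route entirely and exhibiting $d$ explicitly in a four-way case analysis on the types of $a$ and $b$ (both non-separating with $a\cup b$ non-separating; both non-separating with $a\cup b$ separating; one a pants curve; both pants curves), in each case checking the relevant clause of Definition \ref{D:MultiENS} directly using the complexity bound $\xi(\Sigma)\geq 3$.

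Your coarse density argument has a related problem. Since the set of valid $d$ in $V$ is a proper subset of $\cC_0V$ in general, $1$-density of $\cC_0V$ in $\cC V$ (Lemma \ref{L:1dense}) says nothing about the valid set, and the parenthetical ``every valid $d$ contained in $V$ is of this form, so nothing is lost'' conflates the two sets. The paper's density argument is different and is the right one here: the set of valid $d$ lying in $V$ is invariant under the mapping class group of $V$ (such mapping classes extend to $\Sigma$ preserving $a$ and $b$ and all the separation conditions), and a nonempty $\mathrm{Mod}(V)$-invariant subset of $\cC V$ is automatically coarsely dense because orbits are. That reduces everything to producing a single $d$, which is what the case analysis does.
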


\begin{proof}
The last statement is immediate using the mapping class group action, so it suffices to find a single $d$. 
We proceed in cases, all of which are illustrated in Figure \ref{F:abd}.

\begin{figure}[h!]
\includegraphics[width=0.65\linewidth]{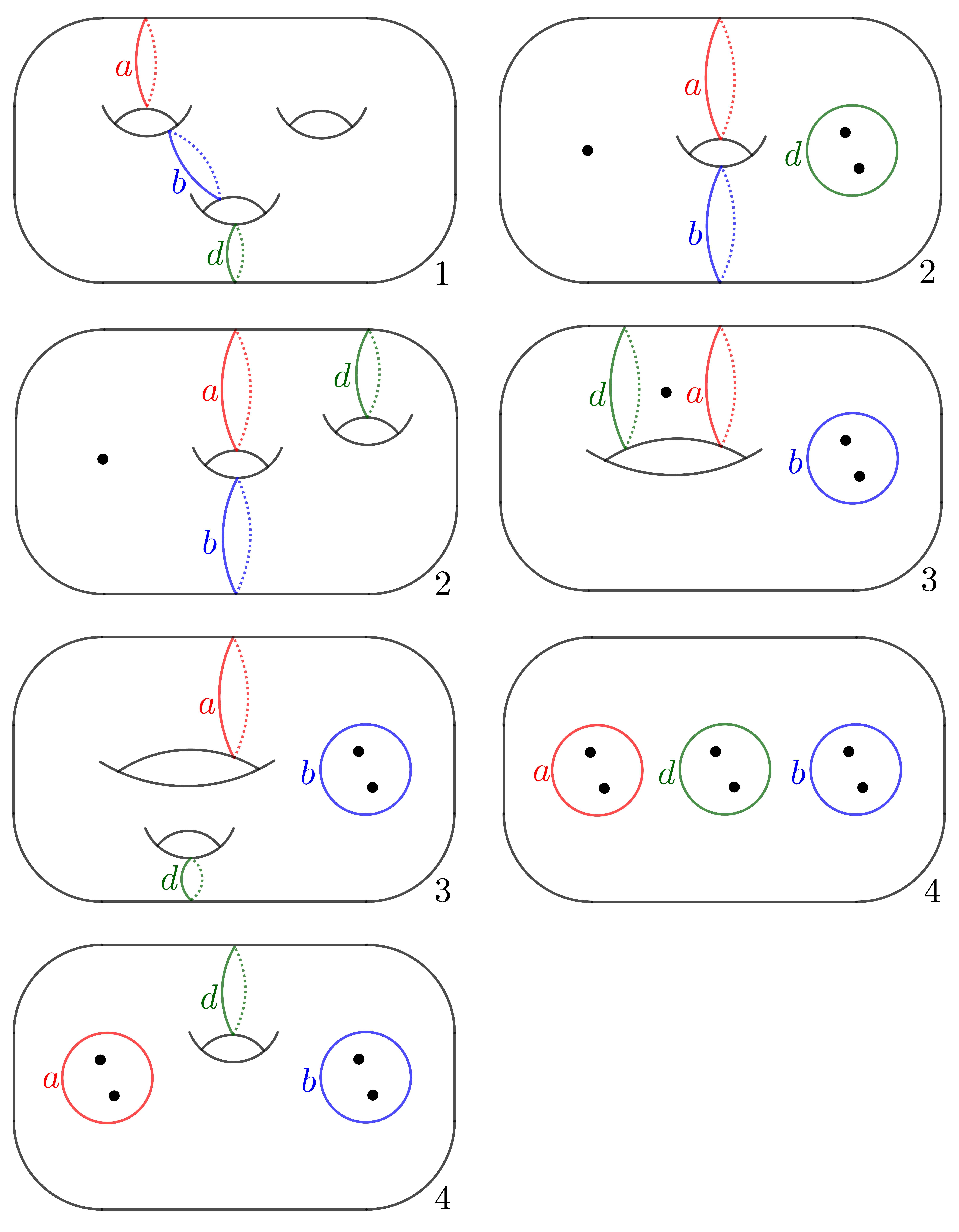}
\caption{The proof of Lemma \ref{L:abd}. The number below and to the right of each surface indicates which case it belongs to.}
\label{F:abd}
\end{figure}  

\bold{Case 1: $a, b$ and $a\cup b$ are all non-separating.} The existence of these curves implies  $g\geq 2$, and we can pick $d$ so that $a, b$ and $d$ bound a pants whose complement is connected. 

\bold{Case 2: $a$ and $b$ are both non-separating and $a\cup b$ is separating.} If $m\geq 3$, then we can pick $d$ to be a loop around two punctures on a component of $\Sigma-(a\cup b)$ with at least 2 punctures. If $m<3$ then $g\geq 2$ and we can pick $d$ to be a non-separating curve on a component of $\Sigma-(a\cup b)$ of positive genus. 

\bold{Case 3: $a$ is non-separating and $b$ is a pants curve (or vice versa).} If $m\geq 3$, we pick $d$ so that $a$ and $d$ bound a once punctured annulus. If $m \leq 2$ then $g\geq 2$ and we can pick $d$ so that $d$ and $a\cup d$ are non-separating. 

\bold{Case 4: $a$ and $b$ are both pants curves.} If $m\geq 6$, we can pick $d$ to also go around a pair of punctures. If $m\leq 5$ then $g\geq 1$ and we can pick $d$ to be non-separating. 
%
\end{proof}

\begin{proof}[Proof of Lemma \ref{L:IntoO}]
Let $x_0=x$, and let $U$ continue to denote the unique component of $\Sigma-z$ that isn't a pants.  We start with the following observation,  whose notation has been chosen to match how it will be applied. 

\begin{sublemma}\label{SL:sideways}
Suppose $x_i\in S_{r+1}^c \cap B_1(z)$ and $z\in S_r^c$. Let $V$ be a component of $\Sigma-(x_i\cup z)$, and suppose $x_{i+1}\in \cC V$ satisfies $d_V(c,x_{i+1})>M$. Then $x_{i+1}\in S_{r+1}$. 
\end{sublemma}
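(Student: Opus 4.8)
\textbf{Proof proposal for Sublemma \ref{SL:sideways}.}

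The plan is to run exactly the Bounded Geodesic Image argument that was used in the proof of Corollary \ref{C:BGI} and in the proof of \eqref{keyhigher:adjacent} above, now applied to the subsurface $V$. First I would observe that since $x_i$ and $z$ are disjoint and $\Sigma$ has medium or high complexity, every component of $\Sigma - (x_i \cup z)$ other than the distinguished component $V$ is a pants; equivalently, the only curves of $\Sigma$ that fail to cut $V$ are $x_i$ and $z$ themselves (up to isotopy), together with the boundary curves of $V$, which are isotopic to $x_i$ or $z$ or to peripheral curves. I would then take any geodesic from $x_{i+1}$ to $c$ in $\cC\Sigma$. Since $d_V(c, x_{i+1}) > M$, Theorem \ref{T:BGI} says this geodesic contains a curve $e$ that does not cut $V$, hence $e$ is disjoint from (or equal to) both $x_i$ and $z$.

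Next I would extract the distance information. Since $z \in S_r^c$ lies at distance $r$ from $c$ and $x_i \in S_{r+1}^c$ lies at distance $r+1$, the curve $e$, being within distance $1$ of $\{x_i, z\}$ in $\cC\Sigma$, satisfies $d(c,e) \geq r - 1$. Because $e$ lies on a geodesic from $x_{i+1}$ to $c$, we get $d(c, x_{i+1}) \geq d(c,e) \geq r-1$; but in fact I want the sharper statement, so I would argue as follows: $e$ is disjoint from or equal to $z$, and $z \in S_r$, so $d(c,e)\geq r-1$, and moreover if $d(c,e) = r-1$ then $e$ would be adjacent to $z$, which is fine. What I actually need is $d(c,x_{i+1}) \geq r+1$. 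Since $x_{i+1}\in\cC V$ is disjoint from both $x_i$ and $z$, and $x_i \in S_{r+1}$, we automatically have $d(c,x_{i+1}) \leq r+2$; I want to rule out $d(c,x_{i+1}) \leq r$. If $d(c,x_{i+1}) \leq r$, then any geodesic from $x_{i+1}$ to $c$ has length at most $r$, and the curve $e$ on it satisfies $d(c,e) \leq r - 1$ (as $e \neq c$; note $e$ is disjoint from $x_i$ while $c = z$ is impossible since $z$ cuts... ). Here I must be slightly careful: $e$ could equal $z$, which has $d(c,z) = r$, and then $e$ being on a geodesic from $x_{i+1}$ to $c$ forces $d(c,x_{i+1}) = d(c,z) + d(z,x_{i+1})$; but $z$ and $x_{i+1}$ are disjoint, so $d(c,x_{i+1}) = r + 1$, contradicting $d(c,x_{i+1})\le r$. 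If instead $e = x_i$ or $e$ is a boundary curve of $V$ isotopic to $x_i$, then $d(c,e) = r+1$, which immediately gives $d(c,x_{i+1}) \geq r+1$ since $e$ is on the geodesic. In all cases $d(c,x_{i+1}) \geq r+1$, and combined with the upper bound $d(c,x_{i+1}) \le r+2 $ together with the fact that $x_{i+1}$ is disjoint from $z \in S_r$ (giving $d(c,x_{i+1})\le r+1$), we conclude $x_{i+1} \in S_{r+1}$.

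The main obstacle is the bookkeeping around which curves fail to cut $V$ and what their distances to $c$ are: one has to handle separately the case $e = z$ (distance $r$, but then disjointness from $x_{i+1}$ forces the conclusion), the case $e$ isotopic to $x_i$ (distance $r+1$, immediate), and the case $e$ peripheral (which cannot occur on a geodesic of curves). A clean way to package this, which I would use, is: the geodesic from $x_{i+1}$ to $c$ passes through a curve $e$ disjoint from or equal to $z$, hence $d(c, x_{i+1}) \geq d(z, x_{i+1}) + d(c,z) - 2 \cdot 1$... actually the cleanest is simply to say $e$ is within distance $1$ of $z$ so $d(c,e) \geq r-1$, hence $d(c,x_{i+1})\geq r-1$, and then separately use that $e$ is also within distance $1$ of $x_i \in S_{r+1}$ to note any curve within distance $1$ of $z$ that also lies on a geodesic to $c$ forces... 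Given the delicacy, I would most likely mirror the phrasing already used successfully in the proof of Lemma \ref{L:GetIntoCc}, where exactly this kind of argument shows a curve lands in the correct sphere: the curve $e$ that fails to cut $V$ is disjoint from or equal to $z$ and from or equal to $x_i$; if $d(c,e) < r$ it would witness $d(c,z) \le r-1$ (a contradiction) when $e \ne z$, and when $e = z$ disjointness of $x_{i+1}$ from $z$ closes the argument; so $d(c,e) \geq r$, whence $d(c,x_{i+1}) \geq r+1$, and the reverse inequality from disjointness with $z$ gives $x_{i+1}\in S_{r+1}$.
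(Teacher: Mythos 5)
Your proposal follows exactly the paper's argument: apply Theorem \ref{T:BGI} to $V$ to get a curve $e$ on the geodesic from $x_{i+1}$ to $c$ that does not cut $V$, observe that $e$ is disjoint from (or equal to) both $x_i$ and $z$, deduce $d(c,x_{i+1})\geq r+1$, and combine with $d(c,x_{i+1})\leq r+1$ from disjointness with $z$. That is the right proof. However, two of your supporting claims are wrong as stated. First, it is not true that the only curves failing to cut $V$ are $x_i$, $z$, and curves isotopic to them: $x_i$ and $z$ are joined in $\cC_c\Sigma$ because their distances to $c$ differ, so $\Sigma-(x_i\cup z)$ is not required to have a unique non-pants component (e.g.\ in $\Sigma_{3,0}$ two disjoint non-separating curves whose union separates can leave two genus-one pieces), and any curve in another non-pants component also fails to cut $V$. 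Your case analysis therefore omits the case $e\notin\{x_i,z\}$; it is handled by the same one-line observation you already use elsewhere ($e$ is disjoint from or equal to $x_i\in S_{r+1}$, so $d(c,e)\geq r$, and $e\neq x_{i+1}$ since $x_{i+1}$ cuts $V$, so $d(c,x_{i+1})\geq r+1$). The paper sidesteps the classification entirely by noting only that every curve not cutting $V$ is disjoint from or equal to both $x_i$ and $z$, hence lies in $S_r\cup S_{r+1}$.

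Second, the deduction in your final "clean packaging" — that $d(c,e)<r$ would force $d(c,z)\leq r-1$ — is false: a curve at distance $r-1$ from $c$ adjacent to $z$ only witnesses $d(c,z)\leq r$, which is no contradiction. The correct lower bound $d(c,e)\geq r$ comes from $e$ being disjoint from or equal to $x_i\in S_{r+1}$, not from $z$. With these two repairs the proof is complete and coincides with the paper's.
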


\begin{proof}
First note that since $x_i$ and $z_i$ are essentially non-separating and $\cC V$ is non-empty, the boundary of $V$ must contain both $x_i$ and $z$. In particular, every curve not cutting $V$ is disjoint from (or equal to) $x_i$ and $z$. Hence every curve not cutting $V$ must lie in $S_r$ or $S_{r+1}$. 

Theorem \ref{T:BGI} gives that the geodesic from $x_{i+1}$ to $c$ contains a curve not cutting $V$, giving the result. 
\end{proof}

Lemma \ref{L:abd} with $a=x_0$ and $b=z$ gives the existence of a curve $x_{1}$ disjoint from $x_0$ and $z$ such that $x_0\cup x_{1}$ and $x_1\cup z$ are essentially non-separating. The coarse density statement in Lemma \ref{L:abd} and Sublemma \ref{SL:sideways} imply that we can pick $x_1$ to be in $S_{r+1}^c$ as well.  

We now note the following statement.

\begin{sublemma}\label{SL:Uout}
For any $x_{i} \in S_{r+1}\cap \cC U$ such that $z\cup x_i$ is essentially non-separating there exists $x_{i+1}\in S_{r+1}\cap \cC U$ disjoint from $x_i$ such that $x_i\cup x_{i+1}$ and $z\cup x_{i+1}$ are essentially non-separating and such that $d_U(x_{i+1}, x_0)=d_U(x_i, x_0)+1$. 
\end{sublemma}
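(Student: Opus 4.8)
The plan is to obtain $x_{i+1}$ as a curve lying deep inside a carefully chosen component $V$ of $\Sigma-(x_i\cup z)$, by combining Lemma~\ref{L:abd} (which supplies the essentially non-separating conditions) with the coarse density of subsurface projections. First I would apply Lemma~\ref{L:abd} with $a=x_i$ and $b=z$: these are disjoint and individually essentially non-separating, so it produces a curve $d_0$, disjoint from and non-isotopic to both $x_i$ and $z$, with $x_i\cup d_0$ and $z\cup d_0$ essentially non-separating. Since $d_0$ lies on an essential non-peripheral curve disjoint from $z$, it cannot sit in the pants component of $\Sigma-z$, so $d_0$ lies in a component $V$ of $\Sigma-(x_i\cup z)$ with $V\subseteq U$ and $\cC V$ of infinite diameter; moreover Lemma~\ref{L:abd} tells us that the set $\mathcal{D}$ of all $d\in \cC V$ with $x_i\cup d$ and $z\cup d$ essentially non-separating is coarsely dense in $\cC V$.

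Next I would pick $x_{i+1}\in\mathcal{D}$ with $d_V(x_{i+1},c)>M$ and, whenever $\pi_V(x_0)$ is defined, with $d_V(x_{i+1},x_0)>M$. This is possible because $\cC V$ has infinite diameter, $\mathcal{D}$ is coarsely dense, and no finite collection of balls of bounded radius covers $\cC V$. Since $V$ is a component of $\Sigma-(x_i\cup z)$ with $x_i$ and $z$ on its boundary, Sublemma~\ref{SL:sideways} together with $d_V(x_{i+1},c)>M$ gives $x_{i+1}\in S_{r+1}$; and by construction $x_{i+1}$ is disjoint from $x_i$ and from $z$, with $x_i\cup x_{i+1}$ and $z\cup x_{i+1}$ essentially non-separating. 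It remains to establish the distance identity $d_U(x_{i+1},x_0)=d_U(x_i,x_0)+1$.

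For the distance identity I would apply the Bounded Geodesic Image Theorem inside the surface $U$ with respect to the subsurface $V$. If $x_0$ is disjoint from $x_i$, then $d_U(x_i,x_0)=1$, the curves $x_{i+1}$ and $x_0$ both lie in $U-x_i$ so $d_U(x_{i+1},x_0)\le 2$, and the estimate $d_V(x_{i+1},x_0)>M$ forces $x_{i+1}$ and $x_0$ to intersect, whence $d_U(x_{i+1},x_0)=2$. If instead $x_0$ cuts $x_i$, then $\pi_V(x_0)$ is defined, $d_V(x_{i+1},x_0)>M$, and Theorem~\ref{T:BGI} applied to $U\supseteq V$ shows that every $\cC U$-geodesic from $x_{i+1}$ to $x_0$ contains a curve not cutting $V$; such a curve is isotopic into $U-V$, and once one knows that the only essential non-peripheral curve in $U-V$ is $x_i$ itself, every such geodesic passes through $x_i$, so $d_U(x_{i+1},x_0)=d_U(x_{i+1},x_i)+d_U(x_i,x_0)=1+d_U(x_i,x_0)$.

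The crux --- and what I expect to be the main obstacle --- is precisely this last point: ensuring that $U-V$ contains no essential non-peripheral curve other than $x_i$, equivalently that $U-x_i$ has a unique non-pants component, namely $V$. This is automatic when $x_i$ is non-separating in $U$, and in general it should follow from carrying the appropriate topological hypothesis on $x_i$ (as a curve in the cut surface $U$) along the iteration, using that $z\cup x_i$ is essentially non-separating together with the cut-surface analysis of Section~\ref{S:ENS} to control the components of $U-x_i$ and to confirm that $\mathcal{D}$ is nonempty and coarsely dense in the relevant component. Checking that this hypothesis holds for the initial curve $x_0$ and is propagated to $x_{i+1}$, while simultaneously maintaining all of the essentially-non-separating and subsurface-projection conditions above, is the delicate part; the remainder is a routine assembly of Lemma~\ref{L:abd}, Sublemma~\ref{SL:sideways}, and Theorem~\ref{T:BGI}.
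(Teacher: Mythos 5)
Your proposal follows the paper's proof essentially step for step: Lemma \ref{L:abd} with $a=x_i$, $b=z$, coarse density to arrange $d_V(x_{i+1},x_0)>M$, the Bounded Geodesic Image Theorem for the distance identity, and Sublemma \ref{SL:sideways} for membership in $S_{r+1}$. The one point you flag as the main obstacle --- that $x_i$ is the only essential curve of $\cC U$ not cutting $V$ --- does not require any extra hypothesis to be carried along the iteration: it follows directly from the assumption, already present in the sublemma's statement, that $z\cup x_i$ is essentially non-separating, since each case of Definition \ref{D:MultiENS} (combined with $\xi(\Sigma)\geq 3$) forces $\Sigma-(z\cup x_i)$ to have a unique non-pants component $V$, and this assumption is reproduced for the next step by the conclusion that $z\cup x_{i+1}$ is essentially non-separating. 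So your argument closes up as written, with no additional bookkeeping.
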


\begin{proof} 
Lemma \ref{L:abd} with $a=x_i$ and $b=z$ gives the existence of a curve $x_{i+1}$ disjoint from $x_i$ and $z$ such that $x_i\cup x_{i+1}$ and $z\cup x_{i+1}$ are essentially non-separating. Furthermore, if $V$ is the unique component of $\Sigma-(x_i\cup z)$ that isn't a pants, the coarse density statement in Lemma \ref{L:abd} implies we can assume that $d_V(x_{i+1}, x_0)>M$. 

Theorem \ref{T:BGI} implies that any geodesic from $x_{i+1}$ to $x_0$ in $\cC U$ must pass through a curve in $\cC U$ not cutting $V$. The only such curve is $x_{i}$, so we conclude $d_U(x_{i+1}, x_0)=d_U(x_i, x_0)+1$. 

Sublemma \ref{SL:sideways} implies that $x_{i+1}\in S_{r+1}^c$. 
\end{proof}

Repeatedly using the sublemma we obtain a path $x_0, x_1, x_2, \ldots$ in $\cC U \cap S_{r+1}^c$ which is a geodesic in $\cC U$ and hence eventually lies far from the projection of $c$ to $\cC U$. 

To get the second statement, note that if $x_0$ is already in $\cO(z)$ we can immediately use Sublemma \ref{SL:Uout} in the same way to get the desired result.  
\end{proof}

\subsection{Connectivity of $\cO(z)$.} Our final task in this section is to prove Lemma \ref{L:OConnected}. We start in high complexity. 

\begin{proof}[Proof of Lemma \ref{L:OConnected} when $\Sigma$ has high complexity.]
By Lemma \ref{L:IntoO}, it suffices to consider two points $x,y\in \cO(z)$ with $d_U(x,c), d_U(y,c)>M+C$, where $C$ is a large constant, and show they can be connected by a path in $\cO(z)$. 

Using that Theorem \ref{T:SphereConnected} is true for $U$, we can start by finding a path $$x=p_0, p_1, \ldots, p_\ell=y$$ in $\cC U$ from $x$ to $y$ with $d_U(p_i, c)>M+C$ for all $i$. 

Note that one of the following is true: 
\begin{enumerate}
\item $U$ has genus at least 2. 
\item $U$ has genus 1. If $\Sigma$ has genus 1, then the high complexity assumption $\xi(\Sigma)\geq 4$ implies that $\Sigma$ has at least 4 punctures, so $U$ has at least 2 punctures. If $\Sigma$ has genus 2, then $\Sigma$ has at least 1 puncture, so $U$ has a pair of boundary components and at least 1 puncture. 
\item $U$ has genus 0. If $\Sigma$ has genus 0, it has at least 7 punctures, so $U$ has at least 5 punctures. If $\Sigma$ has genus 1, it has at least 4 punctures, so $U$ has a pair of boundary components and at least 4 punctures. 
\end{enumerate}

Using Lemma \ref{L:1dense}, we can find an essentially non-separating $q_i\in \cC U$ which is equal to or adjacent to $p_i$. (We pick $q_0=x$ and $q_\ell=y$.) 

One of Lemmas \ref{L:HighGenusConnected}, \ref{L:Genus1Connected}, \ref{L:Genus0Connected} applies to give that there is a path in the essentially non-separating curve graph from $q_i$ to $q_{i+1}$ that has bounded diameter in $\cC U$. Assuming $C$ is large enough, every element of this path is in $\cO(z)$, so concatenating these paths gives the result. 
\end{proof}

The proof of the medium complexity case will be similar, but we first need the following. 

\begin{lemma}\label{L:MediumExtra}
Suppose $x_j, x_{j+1} \in \cO(z)$ have distance much more than $M$ to $c$. Suppose there exists a vertex $w$ in the essentially non-separating curve graph $\cC_0\Sigma$ not equal to $z$ that is adjacent in $\cC_0\Sigma$ to both $x_j$ and $x_{j+1}$. Then $w\in S_{r+1}^c\cup S_{r+2}^c$, and $w$ is adjacent in $\cC_c\Sigma$ to $x_j$ and $x_{j+1}$. 
\end{lemma}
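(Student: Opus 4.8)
\textbf{Proof plan for Lemma \ref{L:MediumExtra}.}
The plan is to run the same Bounded Geodesic Image argument that underlies Corollary \ref{C:BGI} and Sublemma \ref{SL:sideways}, but now applied to a subsurface cut out by the multicurve $w \cup z$ rather than by a single curve. First I would record the structural facts forced by the hypotheses: since $w$ is adjacent in $\cC_0\Sigma$ to both $x_j$ and $x_{j+1}$, and $x_j, x_{j+1} \in \cO(z) \subset \cC U$ where $U$ is the unique non-pants component of $\Sigma - z$, the curve $w$ is disjoint from $z$, so $w$ lies in $\cC U$ as well (it cannot be $z$ by hypothesis, nor a boundary curve of $U$). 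Thus $w \cup z$ is a multicurve, and I would next pin down the components of $\Sigma - (w\cup z)$; in the medium complexity case $\xi(\Sigma)=3$, removing a two-component multicurve leaves only pants components possibly together with a single once- or twice-punctured piece, so in particular there is at most one component $W$ of $\Sigma - (w\cup z)$ that is not a pants — and since $x_j$ and $x_{j+1}$ are disjoint from $w$ and $z$ and are not peripheral, they must live in $\cC W$, which is therefore nonempty and infinite diameter.

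The key step is then the BGI estimate. Because $d_U(x_j, c)$ and $d_U(x_{j+1}, c)$ are assumed much larger than $M$, and subsurface projection to $W$ is coarsely Lipschitz with respect to projection to $U$ (as $W \subset U$), I would arrange (possibly after quoting the coarse-density/orbit statement, or simply noting that $x_j, x_{j+1}$ were chosen far out) that $d_W(x_j, c) > M$; more to the point, I want to bound $d(w, c)$ from below. Apply Theorem \ref{T:BGI} to the subsurface $W$ and the pair $c, x_j$: every geodesic from $c$ to $x_j$ contains a curve not cutting $W$, and the only curves not cutting $W$ are disjoint from (or equal to) $w$ and $z$. Such a curve has distance at most $1$ from $w$ and at most $1$ from $z \in S_r^c$, hence distance at least $r-1$ from $c$. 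Since it lies on a geodesic from $c$ to $x_j$ and $x_j \in S_{r+1}$, we conclude this curve is in $S_{r-1} \cup S_r \cup S_{r+1}$, and being disjoint from $w$ forces $d(w,c) \geq r-2$; combined with $d(w, x_j) = 1$ and $x_j \in S_{r+1}$ we get $d(w,c) \in \{r, r+1, r+2\}$. To rule out $d(w,c) < r+1$ I would be more careful: if $d(w,c) \le r$ then since $w$ is essentially non-separating $w \in \cC_c \Sigma$ and $w$ is adjacent in $\cC_c\Sigma$ to $x_j$ (different distances to $c$, or disjoint with $x_j\cup w$ essentially non-separating), contradicting nothing yet — so in fact $w \in S_{r+1}^c \cup S_{r+2}^c$ is exactly the conclusion once we note $d(w,c) \ge r$: a curve disjoint from $x_j \in S_{r+1}$ and from $z \in S_r^c$ cannot have distance $\le r-1$ from $c$ without contradicting $d(x_j,c)=r+1$ via the neighbour $z$, and the only remaining option is handled by observing $w$ is disjoint from $z\in S_r$ so $d(w,c)\le r+1$ would only be possible — hence I would just split into the cases $d(w,c)=r+1$ and $d(w,c)=r+2$ and check $w\in S_{r+1}^c \cup S_{r+2}^c$ directly, using Theorem \ref{T:BGI} as above to exclude $d(w,c) \le r$.

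Finally, for the adjacency in $\cC_c\Sigma$: $w$, $x_j$, $x_{j+1}$ are all essentially non-separating, and $w$ is adjacent to each of $x_j, x_{j+1}$ in $\cC_0\Sigma$ by hypothesis, so $w \cup x_j$ and $w \cup x_{j+1}$ are essentially non-separating multicurves; by the definition of $\cC_c\Sigma$ (disjoint essentially non-separating curves forming an essentially non-separating multicurve are joined by an edge regardless of their distances to $c$), $w$ is adjacent in $\cC_c\Sigma$ to both $x_j$ and $x_{j+1}$, which is the last claim. The main obstacle I anticipate is the bookkeeping of the low-dimensional case analysis for the components of $\Sigma - (w \cup z)$ in medium complexity — ensuring there really is a unique non-pants component $W$ with infinite-diameter curve complex so that Theorem \ref{T:BGI} has content — together with making the lower bound $d(w,c) \ge r+1$ (as opposed to merely $\ge r-2$) fully rigorous; everything after the BGI estimate is a direct unwinding of definitions.
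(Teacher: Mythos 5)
There is a genuine gap, and it starts at the very first structural claim. From the hypothesis that $w$ is adjacent in $\cC_0\Sigma$ to $x_j$ and $x_{j+1}$ you deduce that ``$w$ is disjoint from $z$, so $w$ lies in $\cC U$.'' That does not follow: adjacency to $x_j,x_{j+1}$ only gives disjointness from those two curves, and indeed in every application of this lemma (Corollary \ref{C:MediumExtra}) the curve $w$ is chosen to \emph{intersect} $z$ once or twice. So the multicurve $w\cup z$ and the subsurface $W=\Sigma-(w\cup z)$ on which your whole argument is built need not exist, and the case where they do exist is not the one the paper needs. A second, independent problem is the BGI input: even when $W$ makes sense, the bound $d_W(x_j,c)>M$ does not follow from $d_U(x_j,c)>M$ together with $W\subset U$ --- largeness of a subsurface projection does not pass to smaller subsurfaces, and the lemma's hypotheses give no control whatsoever on $d_W$. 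Finally, your attempt to upgrade $d(w,c)\ge r-2$ to $d(w,c)\ge r+1$ goes in a circle and is never completed, and that lower bound is exactly the content of the conclusion $w\in S_{r+1}^c\cup S_{r+2}^c$.

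The intended argument projects to $U$ itself, not to a smaller subsurface. Since $w\ne z$ and $w$ is disjoint from $x_j\subset U$, the curve $w$ must cut $U$ (otherwise it could be isotoped into $\Sigma-U$, forcing $w=z$). Disjoint curves that both cut $U$ have uniformly close projections, so $d_U(w,c)\ge d_U(x_j,c)-O(1)>M$. Theorem \ref{T:BGI} applied to $U$ then says every geodesic from $w$ to $c$ contains a curve not cutting $U$; the only such curve is $z$, so $d(w,c)=d(w,z)+r\ge r+1$, while disjointness from $x_j\in S_{r+1}$ gives $d(w,c)\le r+2$. Your closing paragraph on adjacency in $\cC_c\Sigma$ is fine, but the core of the proof needs to be redone along these lines.
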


Note that the assumption requires $w\cup x_j$ and $w \cup x_{j+1}$ to be essentially non-separating

\begin{proof}
The coarse Lipschitz property of subsurface projections implies that $d_U(c,w)>M$, so the geodesic from $w$ to $c$ must pass through $z$, proving the result. 
\end{proof}

Note that the medium complexity assumption that $(g,n)\in \{(2,0), (1,3), (0,6)\}$ gives rise to the following possibilities for $U$ in the notation of Definition \ref{D:cut}: 
$$(h,m,p,u)\in \{(1,0,1,0), (1,1,0,1), (0,3,1,0), (0,4,0,1)  \}.$$
Recall that $h$ denotes the genus of $U$, $m$ the number of punctures, $p$ the number of pairs of boundary components, and $u$ the number of unpaired boundaries. In particular, $U$ is always genus 0 or 1.

\begin{definition}
If $U$ is genus $0$, say $x_j, x_{j+1}\in \cC_0 U$ intersect nicely if they have intersection number 2.
If $U$ has genus 1, say $x_j, x_{j+1}\in \cC_0 U$ intersect nicely if they have intersection number 1. (This implies they are both non-separating.)  
\end{definition}

\begin{corollary}\label{C:MediumExtra}
Suppose that $\Sigma$ has medium complexity, and that $x_j, x_{j+1}\in \cC_{0} U$  that are essentially non-separating and intersect nicely. Then there exists $w\in S_{r+1}^c\cup S_{r+2}^c$ adjacent to both $x_j$ and $x_{j+1}$. 
\end{corollary}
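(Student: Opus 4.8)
The plan is to reduce the statement to Lemma~\ref{L:MediumExtra}. That lemma is phrased for $x_j,x_{j+1}\in\cO(z)$ with $d_U(\cdot,c)$ much larger than $M$, and the present corollary is used (and should be read) in exactly that context. Granting those standing hypotheses, it suffices to produce a single curve $w\in\cC_0\Sigma$ with $w\neq z$ that is disjoint from, and not isotopic to, both $x_j$ and $x_{j+1}$, and such that $w\cup x_j$ and $w\cup x_{j+1}$ are essentially non-separating multi-curves (equivalently, $w$ is adjacent in $\cC_0\Sigma$ to both). Lemma~\ref{L:MediumExtra} then upgrades such a $w$ to an element of $S_{r+1}^c\cup S_{r+2}^c$ adjacent in $\cC_c\Sigma$ to $x_j$ and $x_{j+1}$, which is precisely the claim.

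First I would use that $x_j$ and $x_{j+1}$ intersect nicely to control the subsurface $F\subseteq U$ they fill: when $U$ has genus $1$ they meet once and $F$ is a one-holed torus, and when $U$ has genus $0$ they meet twice and $F$ is a four-holed sphere. Any curve disjoint from both $x_j$ and $x_{j+1}$ may be isotoped into $\overline{\Sigma\setminus F}$ or onto a component of $\partial F$, so $w$ is to be found there. The point that makes this possible even when $F$ is large is that $w$ is required to miss $x_j$ and $x_{j+1}$ but \emph{not} $z$; in particular $z$ itself typically lies in $\overline{\Sigma\setminus F}$, and one simply takes $w$ to be a suitable curve of $\overline{\Sigma\setminus F}$ different from $z$.

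Then I would run through the four possibilities $(h,m,p,u)\in\{(1,0,1,0),(1,1,0,1),(0,3,1,0),(0,4,0,1)\}$ for $U$. When $U$ has genus $1$ the curves $x_j,x_{j+1}$ are non-separating in $\Sigma$ and $\overline{\Sigma\setminus F}$ contains a one-holed torus piece (for $\Sigma=\Sigma_{2,0}$ it is exactly a one-holed torus containing $z$; for $\Sigma=\Sigma_{1,3}$ it is a four-holed sphere containing all three punctures), and I take $w$ to be a non-separating curve of that torus, respectively a loop around two punctures, chosen distinct from $z$; one checks $w\cup x_j$ and $w\cup x_{j+1}$ are essentially non-separating either because $w$ or $x_\bullet$ is a pants curve, or by observing that cutting $\Sigma$ along $F$ and along either curve leaves a connected surface. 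When $U$ has genus $0$ the subsurface $F$ is also genus $0$, hence cannot contain both boundary copies of $z$; this forces $x_j,x_{j+1}$ to be pants curves rather than ``handle'' curves, after which $w\cup x_j$ and $w\cup x_{j+1}$ are automatically essentially non-separating for any $w$ disjoint from them (condition (2) of Definition~\ref{D:MultiENS}), and one picks $w$ to be a loop around two punctures, or a non-separating curve in the genus-carrying part of $\overline{\Sigma\setminus F}$, again different from $z$.

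The main obstacle is the bookkeeping in this case analysis, and in particular the tight case $\Sigma=\Sigma_{2,0}$: there $\Sigma\setminus F$ is only a one-holed torus, so there is very little room, and one must use that this torus still carries an essential curve other than $z$ which pairs essentially-non-separatingly with each of $x_j$ and $x_{j+1}$. Once every case is checked, Lemma~\ref{L:MediumExtra} completes the proof.
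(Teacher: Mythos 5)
Your overall strategy is the same as the paper's: reduce to Lemma~\ref{L:MediumExtra} and then, case by case over the four possible types of $U$, exhibit an explicit $w\in\cC_0\Sigma$, distinct from $z$, that is adjacent in $\cC_0\Sigma$ to both $x_j$ and $x_{j+1}$. The genus-one cases are handled correctly. The genus-zero analysis, however, contains a genuine error. You assert that when $U$ has genus $0$ the filled subsurface $F$ ``cannot contain both boundary copies of $z$'' and that this ``forces $x_j,x_{j+1}$ to be pants curves.'' Both claims fail for $U$ of type $(0,3,1,0)$, i.e.\ $\Sigma=\Sigma_{1,3}$ with $z$ non-separating. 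There $\cC_0 U$ contains, besides pants curves, eventually non-separating curves: genus-zero curves separating the two paired boundary components $\partial_1,\partial_2$ of $U$, which become non-separating in $\Sigma$. For instance, take $x_j$ a loop around $\partial_1$ and a puncture $q$, and $x_{j+1}$ a loop around $\partial_2$ and the same puncture $q$; these intersect nicely, neither is a pants curve, and the four-holed sphere $F$ they fill has both $\partial_1$ and $\partial_2$ on its boundary.

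This matters because your verification that $w\cup x_j$ and $w\cup x_{j+1}$ are essentially non-separating in the genus-zero case rests entirely on condition (2) of Definition~\ref{D:MultiENS}, invoked via the false claim that $x_j,x_{j+1}$ are pants curves. The argument is rescued whenever $w$ itself can be taken to be a pants curve, but in some configurations it cannot: with $x_j$ a loop around $\{\partial_1,q\}$ and $x_{j+1}$ a pants curve around $\{q,q'\}$, the complement of $F$ in $U$ is a pants, and the only essential curve of $\Sigma$ other than $z$ disjoint from $x_j\cup x_{j+1}$ is the fourth boundary curve $\delta$ of $F$, which is non-separating in $\Sigma$ but not a pants curve. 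One must then check that $\delta\cup x_j$ is an essentially non-separating multi-curve via condition (1) or (3) of Definition~\ref{D:MultiENS} (here condition (3): they cobound a once-punctured annulus), a verification your argument never makes. The conclusion of the corollary does hold in all these configurations, and the paper proceeds by a direct construction of $w$ for each $(g,n)$ and each type of $z$ (Figure~\ref{F:w}); but as written your genus-zero case rests on a false premise and is incomplete.
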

\begin{proof}
The proof is illustrated in Figure \ref{F:w}.

\begin{figure}[h!]
\includegraphics[width=0.65\linewidth]{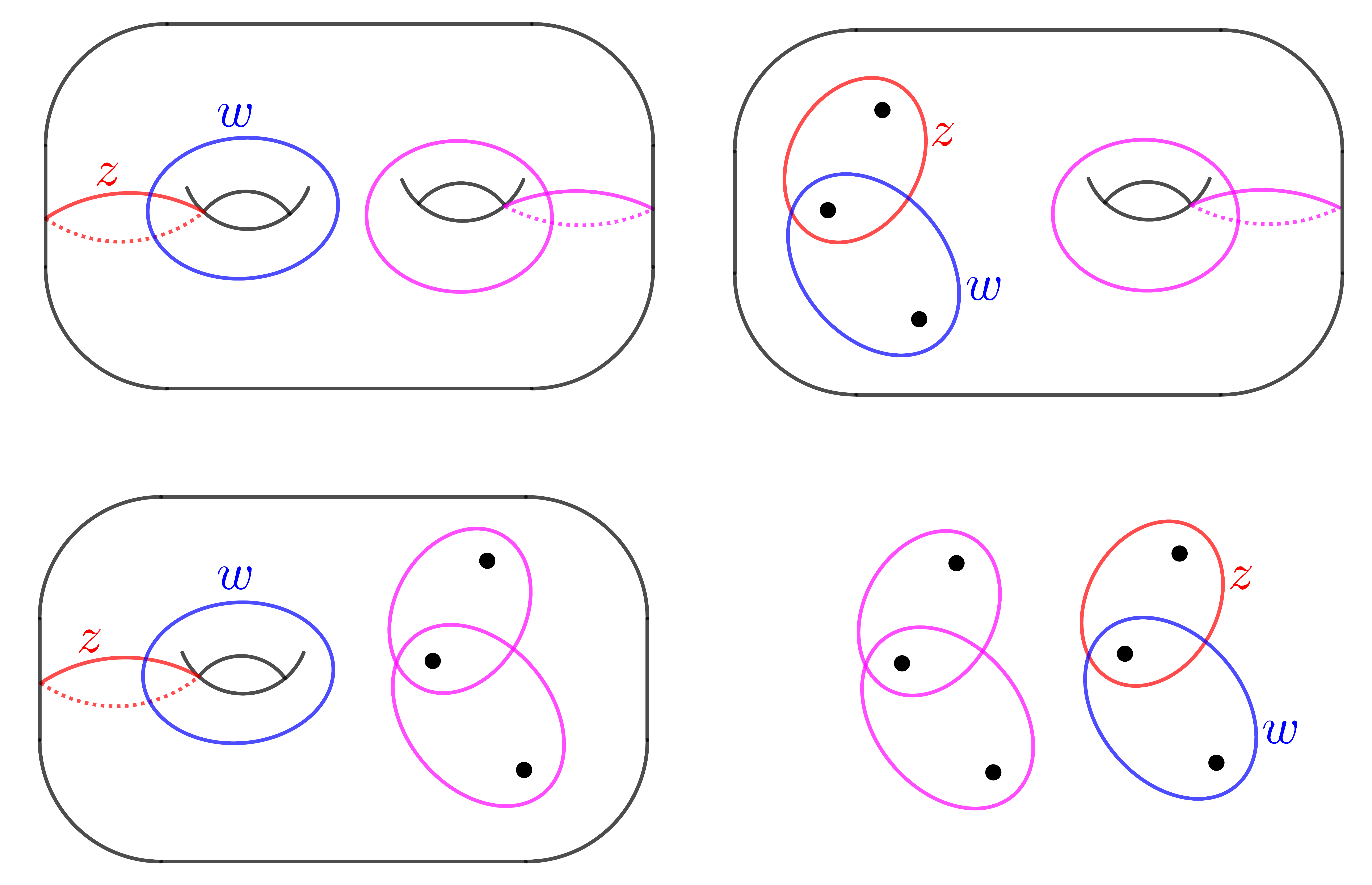}
\caption{The proof of Corollary \ref{C:MediumExtra}. The unlabelled curves are $x_j$ and $x_{j+1}$.}
\label{F:w}
\end{figure}  

First suppose $(g,n)=(2,0)$. In this case let $w$ be a non-separating curve disjoint from $x_j, x_{j+1}$ that intersects $z$ once. 

Next suppose $(g,n)=(1,3)$ and $z$ is a pants curve. In this case let $w$ be a pants curve disjoint from $x_j, x_{j+1}$ that intersects $z$ twice. 

Next suppose $(g,n)=(1,3)$ and $z$ is non-separating. In this case, let $w$ be a non-separating curve disjoint from $x_j, x_{j+1}$ that intersects $z$ once. 

Finally suppose $(g,n)=(0,6)$. In this case, let $w$ be a pants curve disjoint from $x_j, x_{j+1}$ that intersects $z$ twice.

With this $w$ as an input, Lemma \ref{L:MediumExtra} gives the result. 
\end{proof}

\begin{proof}[Proof of Lemma \ref{L:OConnected} when $\Sigma$ has medium complexity.]
The proof starts as in the high complexity case, giving a path $$x=p_0, p_1, \ldots, p_\ell=y$$ in $\cC U$ from $x$ to $y$ with $d_U(p_i, c)>M+C$ for all $i$, and we still have the existence of an essentially non-separating curve $q_i\in \cC U$ equal to or adjacent to $p_i$ for all $i$. (We pick $q_0=x$ and $q_\ell=y$.) It suffices to show that for each fixed $i$, there is a path from $q_i$ to $q_{i+1}$ in $B_2(z)\cap (S_{r+1}^c \cup S_{r+2}^c)$.

If $h=1$, then Lemma \ref{L:NonSepPath} gives the existence of a sequence $$q_i=x_0, x_1, \ldots, x_k=q_{i+1}$$ in $\cC_0 U$ with all $x_j$, $0<j<k $ non-separating, and $i(x_j, x_{j+1})=1$ for $0<j<k-2$. We can also assume that either $x_0$ is a pants curve and $x_1$ is disjoint from $x_0$, or that $x_0$ is a non-separating curve and $i(x_0, x_1)=1$, and similarly for $x_k$. Since Lemma \ref{L:NonSepPath} gives that the path has bounded diameter in $\cC U$, we can assume that $d_U(x_i, c)$ is much greater than $M$. Now, Corollary \ref{C:MediumExtra} gives that when $i(x_j, x_{j+1})=1$  there is a $w\in S_{r+1}^c\cup S_{r+2}^c$ adjacent to both $x_j$ and $x_{j+1}$. By interlacing $w$ of this form, we get a path as desired. 

If $h=0$, first note that if $a,b\in \cC_0 U$ have intersection number 4 and go around the same pair of peripheries, then there is some $v\in \cC_0 U$ such that $i(a,v)=2=i(b,v)$. Also note that if $a$ and $b$ are disjoint, then there is some $v\in \cC_0 U$ such that $i(a,v)=2=i(b,v)$.
This note and Lemma \ref{L:ArcPath}  gives the existence of a sequence $$q_i=x_0, x_1, \ldots, x_k=q_{i+1}$$ in $\cC_0 U$ such that for each $j$ the intersection number between $x_j$ and $x_{j+1}$ is 2. 
Again using Corollary \ref{C:MediumExtra} allows us to conclude.
\end{proof}

\bibliography{Spheres}{}
\bibliographystyle{amsalpha}

\end{document}